\newcommand{\no}{\nonumber}
\newcommand{\ra}{\rightarrow}
\newcommand{\eps}{\mbox{$\epsilon$}}
\newcommand{\be}{\begin{equation}}
\newcommand{\ee}{\end{equation}}
\newcommand{\ba}{\begin{eqnarray}}
\newcommand{\ea}{\end{eqnarray}}
\newcommand{\bpm}{\begin{pmatrix}}
\newcommand{\epm}{\end{pmatrix}}
\newcommand{\R}{\mathbb{R}}
\newcommand{\gd}{\mbox{gd}}
\renewcommand{\H}{{\mathcal H}}
\newcommand{\beq}{\begin{equation}}
\newcommand{\eeq}{\end{equation}}
\newcommand{\sech}{\mbox{sech}}
\newtheorem{theorem}{Theorem}[section]
\newtheorem{lemma}{Lemma}[section]
   \newcommand{\blem}{\begin{lemma}}
   \newcommand{\elem}{\end{lemma}}
\newtheorem{proposition}{Proposition}[section]
   \newcommand{\bprop}{\begin{proposition}}
   \newcommand{\eprop}{\end{proposition}}
\newtheorem{remark}{Remark}[section]
\newtheorem{conjecture}{Conjecture}
\begin{document}

\title{Ballistic Orbits and  Front Speed Enhancement for ABC Flows}

\author{Tyler McMillen\thanks{Department of Mathematics, California State University at Fullerton, 
Fullerton, CA 92834, USA. Corresponding author. Email: tmcmillen@fullerton.edu. }, 
 Jack Xin\thanks{ Department
of Mathematics, University of California at Irvine, Irvine, CA
92697, USA. Email: jxin@math.uci.edu, yyu1@math.uci.edu.}, 
  Yifeng Yu$^{\dagger}$,
and Andrej Zlato\v{s}\thanks{ Department of Mathematics, University of Wisconsin, 
Madison, WI 53706, USA.  Email: andrej@math.wisc.edu.}  }

%\date{}
\maketitle

\begin{abstract}
We study the two main types of trajectories of the ABC flow in the near-integrable regime:  spiral orbits  and edge orbits.
The former are helical orbits which are perturbations of similar orbits that exist in the integrable regime, while the latter exist only in the non-integrable regime. 
%remaining in a single invariant cell of the perturbed integrable flow and belonging to the KAM regime, while the latter cross cell boundaries infinitely many times and do not exist in the integrable regime.
We prove existence of 
%global in time a 
ballistic (i.e., linearly growing) spiral orbits by using the contraction mapping principle in the Hamiltonian formulation, and we also find and analyze ballistic edge orbits.
%, which consist of 
%one linearly growing component and two oscillating components. 
%These non-KAM spirals do not exist in the integrable regime.  
We discuss the relationship of  existence of these orbits with questions concerning front propagation in the presence of flows, in particular, the question of
linear (i.e., maximal possible) front speed enhancement rate for  ABC flows.
 
\end{abstract}
\bigskip

{\bf Keywords.} Global trajectories, helical motion, spiral orbits, 
 non-integrable ABC flows, 
KAM and non-KAM solutions, 
front speed enhancement.
\bigskip

{\bf AMS subject classifications}: 34C15, 34C11, 34E10, 65P20, 35Q35.

\tableofcontents

\section{Introduction}

Front propagation in complex fluid flows arises in many 
areas of science, including combustion (e.g., in internal combustion engines)~\cite{Ro95,W85}, growth of populations (such as plankton)
in the ocean~\cite{A}, and 
chemical reactions in stirred liquids~\cite{PS05,Xin_09}. 
A longstanding fundamental problem is to characterize and quantify enhanced 
transport (front propagation and particle diffusion)  
in fluid flows containing complex and turbulent 
streamlines (see \cite{BCVV95,CW91,ChSo,FP97,MajKr99,PS05,Pet00,Siv,Xin_09,Yak88} 
and references therein, and \cite{baso12, malibosomi15} for recent work on so-called ``burning invariant manifolds'' in the analysis of advection-reaction diffusion systems). In the last two decades, significant progress has been achieved
in the study of these questions for
prototype partial differential equation (PDE) 
models \cite{MajKr99,Xin_09}. For instance, various analytical results were obtained concerning 
effective diffusion \cite{FP94,H03} as well as turbulent front speeds in reaction-diffusion \cite{Aud,KR, NR,RZ_07,Xin_09,XY5,Z_10,Z_10b} 
and G-equation \cite{CNS,XY1,XY5} models with spatially periodic  
incompressible flows in the advection dominated regime.  

Currently available analytical results concerning
asymptotics of front speed enhancement in the limit of large amplitude periodic advection are either limited to, or much more applicable in, two space dimensions.
This is because a detailed 
understanding of the trajectories of the underlying advection-generated dynamical system
is necessary, and there is a dramatic difference between 
two (2D) and three (3D) space dimensions. 
In particular, a crucial role is played by trajectories that extend to infinity, if they exist.  The question of their existence and properties can be effectively addressed in 2D via phase portrait analysis and integrability, but it becomes much more challenging in 3D due to the loss of 
integrability and emergence of chaos.

\subsection{ABC flows}

Motivated by the above questions, in this paper we study transport properties of a prime example of a complex steady incompressible periodic flow in 3D, the classical   
Arnold-Beltrami-Childress (ABC) flow \cite{ar65,Dom_86,Fr93}.  (See the end of this introduction for applications of our analysis to front propagation.)   Its standard form is
\ba
x' & = &  A\, \sin z+   C \, \cos y \no \\
y' & = &  B\, \sin x +   A\, \cos z \label{abc1} \\
z' & = &   C \,  \sin y + B\, \cos x \no 
\ea 
and we note that the vector field on the right-hand side is also a steady solution of the 3D Euler equations.
This system can also be written in the 
form
\ba
%\frac{d}{dt}
\bpm x \\ y \epm' & = & \bpm 0 & 1 \\ -1 & 0 \epm \nabla H(x,y) + A \bpm \sin z \\ \cos z\epm
\label{e1}
\\[.1in]
z' & = & H(x,y)
\label{e2}
\ea
where 
\[H(x,y) = B \cos x + C \sin y.
\]  
(Obviously, similar forms for $(x,z)$ or $(y,z)$ in place of $(x,y)$ exist as well.)  
Thus, the system is integrable when $A=0$ (or, indeed, if any of the three parameters is zero \cite{Dom_86}).  

%%%%%%%%%%%% FIGURE %%%%%%%%%%%%%%%%
\begin{figure}[h!]
\begin{center}
          \includegraphics*[width=3.5in]{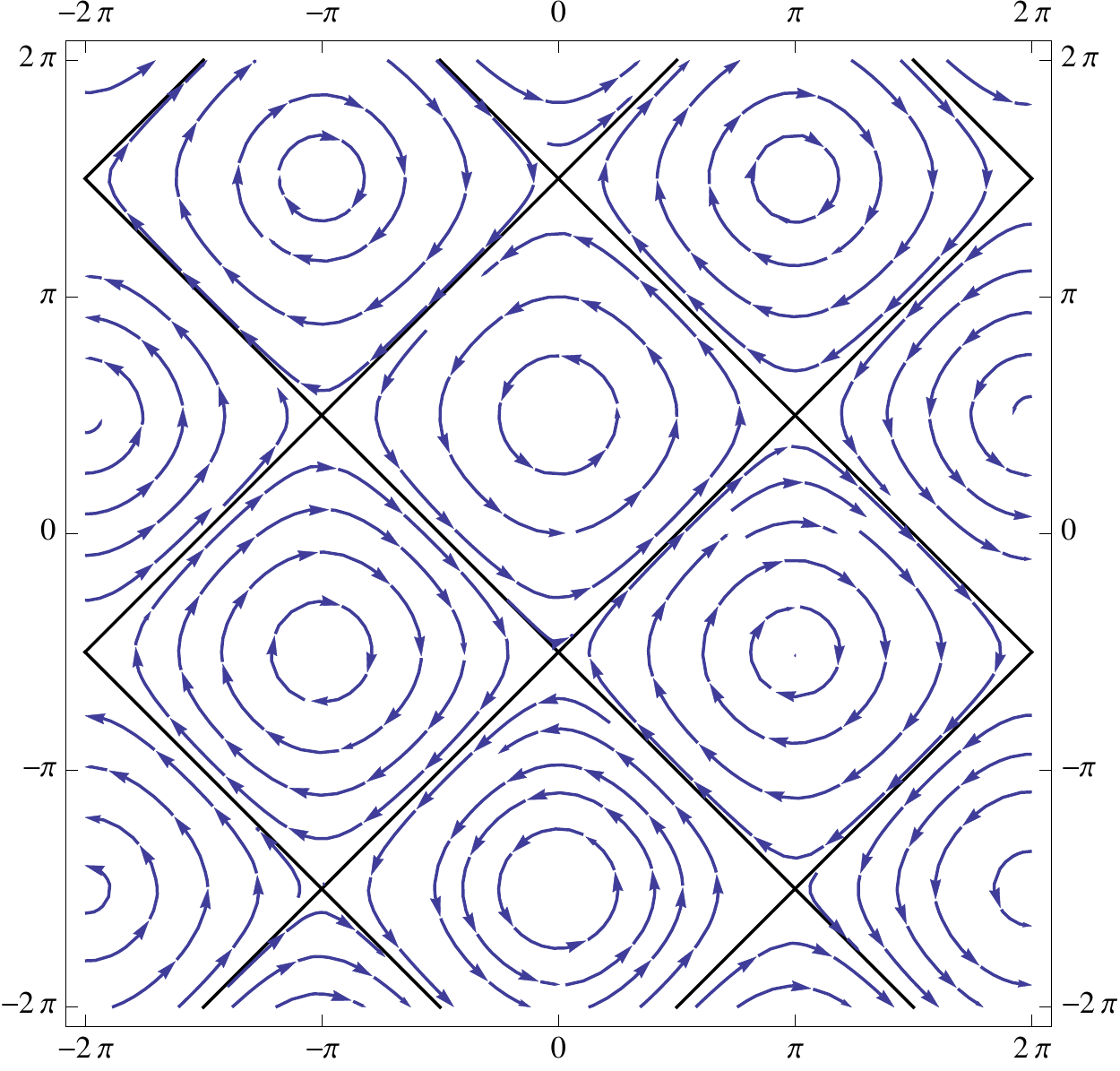}
\end{center}
\caption{Trajectories projected on the $xy-$plane in  the conservative case $A=0$ and $B=C=1$.}
\label{conservativeflow}
\end{figure}
%%%%%%%%%%%%%%%%%%%%%%%%%%%%%%%%

The conservative case $A=0$ and $B=C=1$ is shown in Figure~\ref{conservativeflow}.  Heteroclinic orbits, which connect saddle points, are the contours $H(x,y)=0$.  The other trajectories are closed orbits or fixed points.  We will refer to the region enclosed by a heteroclinic cycle in this conservative case as a \emph{cell}.  Notice that $H(x,y)>0$ in those cells where the flow is counterclockwise, while $H(x,y)<0$ in those where the flow is clockwise.

\subsection{KAM regions and edge orbits}

In this paper we are interested in the near-integrable case $0<A\ll 1$, and most of the results will concern the case $B=C=1$.  (See \cite{XYZ} for a related analysis in the symmetric case $A=B=C=1$.)  In this case, two distinct types of trajectories exist, depending on their initial conditions:  
\begin{enumerate}[(i)]
\item 
\emph{spiral orbits}, in which $x,y$ oscillate within a single cell and $z$ grows monotonically; and 
\item 
\emph{edge orbits}, in which the projections of the trajectories  on the $xy$-plane repeatedly cross cell boundaries.  
\end{enumerate}
Due to our intended applications to front propagation, we are particularly interested in {\it ballistic} orbits, that is, those for which one or more coordinates ($z$ for spiral orbits and at least one of $x,y$ for edge orbits) grow linearly as $t\to\infty$.  
The following figures illustrate the types of behavior that can occur.

%%%%%%%%%%%% FIGURE %%%%%%%%%%%%%%%%
\begin{figure}[h!]
\begin{center}
          \includegraphics*[width=4in]{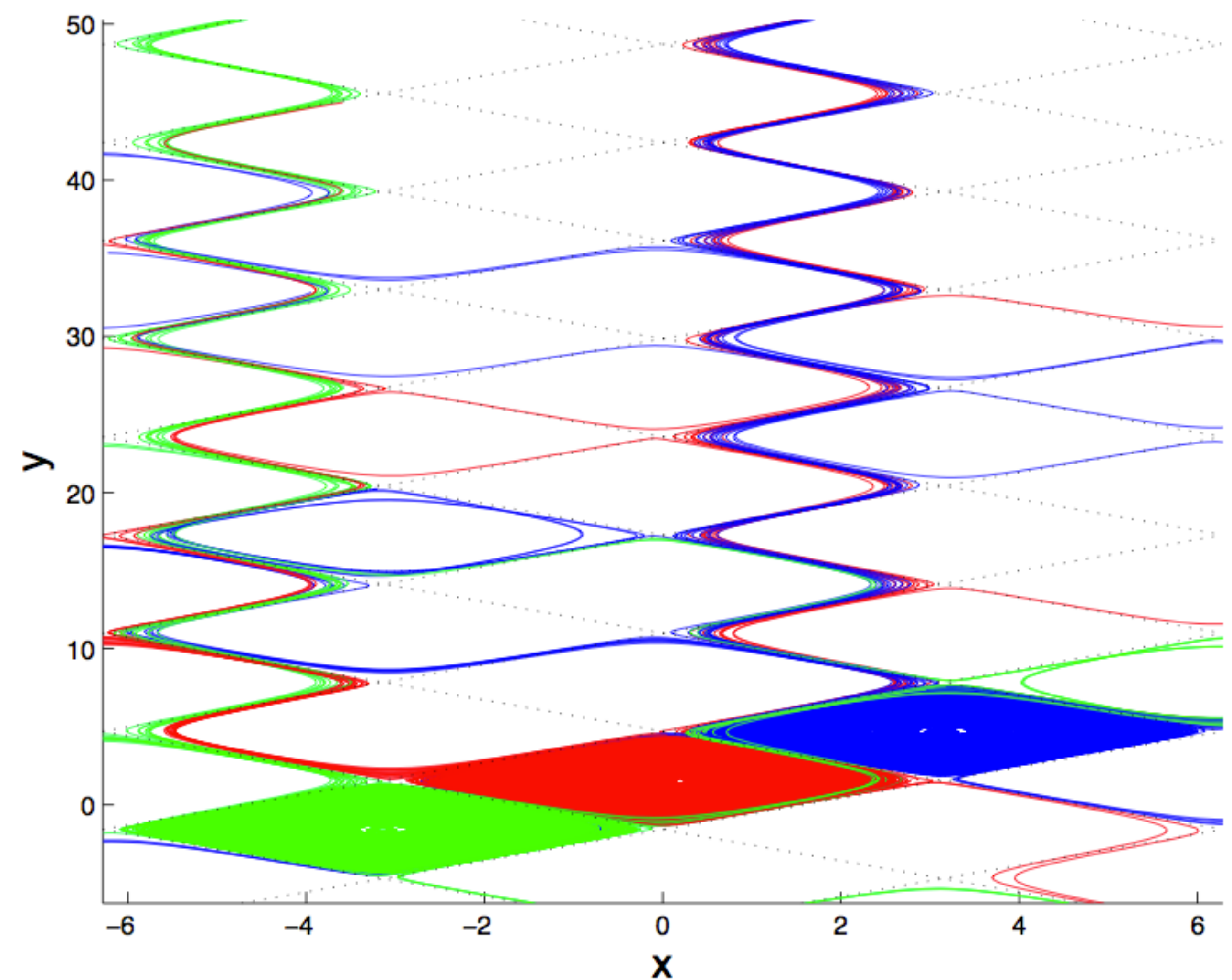}
\end{center}
\caption{Trajectories for $(A,B,C)=(.1,1,1)$ and $t\in[0,100]$, projected on the $xy$-plane.  Dotted lines represent cell boundaries.}
\label{trajectory1}
\end{figure}
%%%%%%%%%%%%%%%%%%%%%%%%%%%%%%%%

Figure~\ref{trajectory1} shows the $xy$-plane projections of 720 trajectories with initial conditions uniformly distributed in 3 adjacent cells, and the  curves are color-coded according to in which cell they start.  For all trajectories, the initial $z$-value is $z(0)=0$.
Trajectories that start near the center of a cell are spiral orbits and remain in the cell;  a typical such trajectory is shown in Figure~\ref{spiral}.    
There is also a layer near the edges of the cells where trajectories cross into neighboring cells and we call these edge orbits by analogy with Rayleigh-B\'enard convection.

%%%%%%%%%%%% FIGURE %%%%%%%%%%%%%%%%
\begin{figure}[h!]
\begin{center}
          \includegraphics*[width=4.5in]{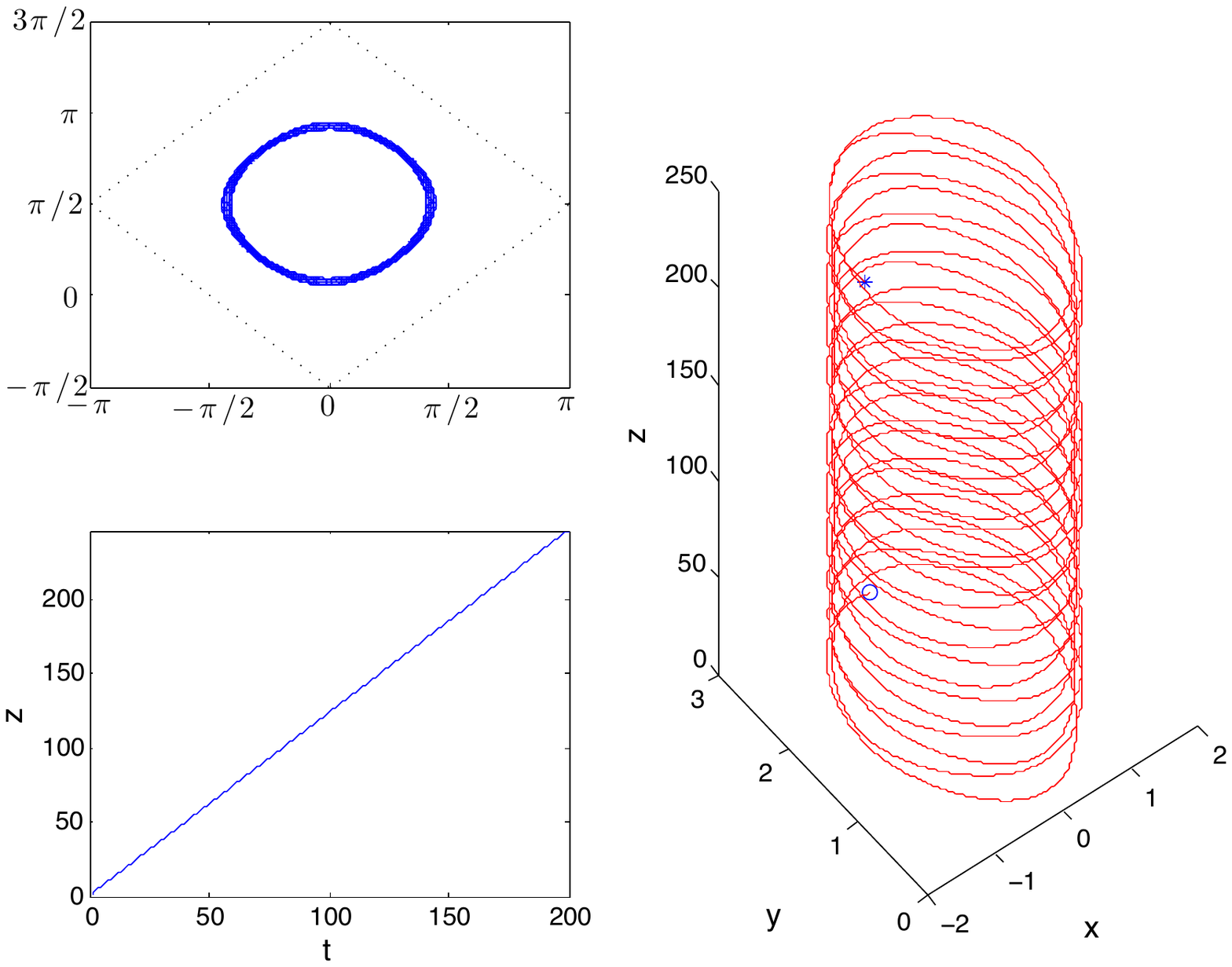}
\end{center}
\caption{A single trajectory with $(A,B,C)=(.1,1,1)$. The upper left panel is the projection on the $xy$-plane.  In the right panel, the initial condition is marked with a circle and the end position is marked with a star.}
\label{spiral}
\end{figure}
%%%%%%%%%%%%%%%%%%%%%%%%%%%%%%%%

When $(x,y)$ stay within a cell, the sign of $z'$ does not change, so $z$ can be treated as a time variable and the system can be reduced to a 2D system.  This is the approach taken in \cite{huzhda98,zhkwlihu93}.  
Then \eqref{e1} can be written as
\be
H(x,y)\,\frac{d}{dz} \bpm x \\ y \epm = \bpm C\, \cos y \\ B\, \sin x\epm + A \bpm \sin z \\ \cos z \epm.
\ee
Note that $H$ has the same sign within a cell and when $A\ll 1$,  the $A\bpm \sin z \\ \cos z\epm$ term acts as a small periodic  forcing term on the conservative system.  Thus, one can expect to see trajectories that, when projected on the $xy$-plane, are small perturbations of the conservative flow (as in Figure~\ref{spiral}).  In fact, in \S\ref{sec.spiral} we will prove existence of ballistic spiral orbits where $x$ and $y$ are $2\pi$ periodic in $z$.  Note that while a $2\pi$ periodic solution was mentioned in \cite[cf. pp 377-8]{Dom_86}, the authors only discussed an approximate solution there.  Such a periodic solution does not follow from KAM type theorems for 3 dimensional flows (e.g.  \cite{MW1994}),  which only provide quasi-periodic solutions.  Nor does it follow from  the Melnikov method, since for the ABC flow it only leads to periodic orbits where $x$ and $y$ are $2m\pi$ periodic in $z$ for $m\geq 2$.  See also the remark after Theorem 3.2 in \cite{zhkwlihu93} and the paragraph after Theorem \ref{persol}.  Moreover,  in terms of the application to front propagation,  we believe that (\ref{limitformula})  attains maximum along  this type of  special spiral orbits for $p=(0,0,1)$.

To visualize the types of conditions that lead to spiral or edge orbits, we computed trajectories for initial conditions taken at 80000 points randomly distributed within each cell (with a fixed value of $z(0)$) and found those for which the trajectories never leave the cell for $t\in[0,50]$.  (The picture seems to stay the same for large $t$.)  These points are seen in Figure~\ref{cellflow}.   
Since $H$ has the same sign inside each cell,
trajectories that never leave their starting cell are spiral orbits with a monotone $z$ coordinate.   This is the 
Kolmogorov-Arnold-Moser (KAM) regime, and for this reason
we call the set of initial conditions within a cell for which trajectories never leave that cell a \emph{KAM region}.   Of course, trajectories may leave the KAM region while remaining inside the cell---they are trapped in the cell, not necessarily in the KAM region.  Indeed, trajectories starting from these initial conditions appear to fill the entire cell.
From Figure~\ref{cellflow} we see  that the KAM region shrinks with increasing $A$ (and disappears altogether for large $A$).  Note that the KAM region depends on which value $z(0)$ is chosen.

%%%%%%%%%%%% FIGURE %%%%%%%%%%%%%%%%
\begin{figure}[h!]
\begin{center}
          \includegraphics*[width=2.2in]{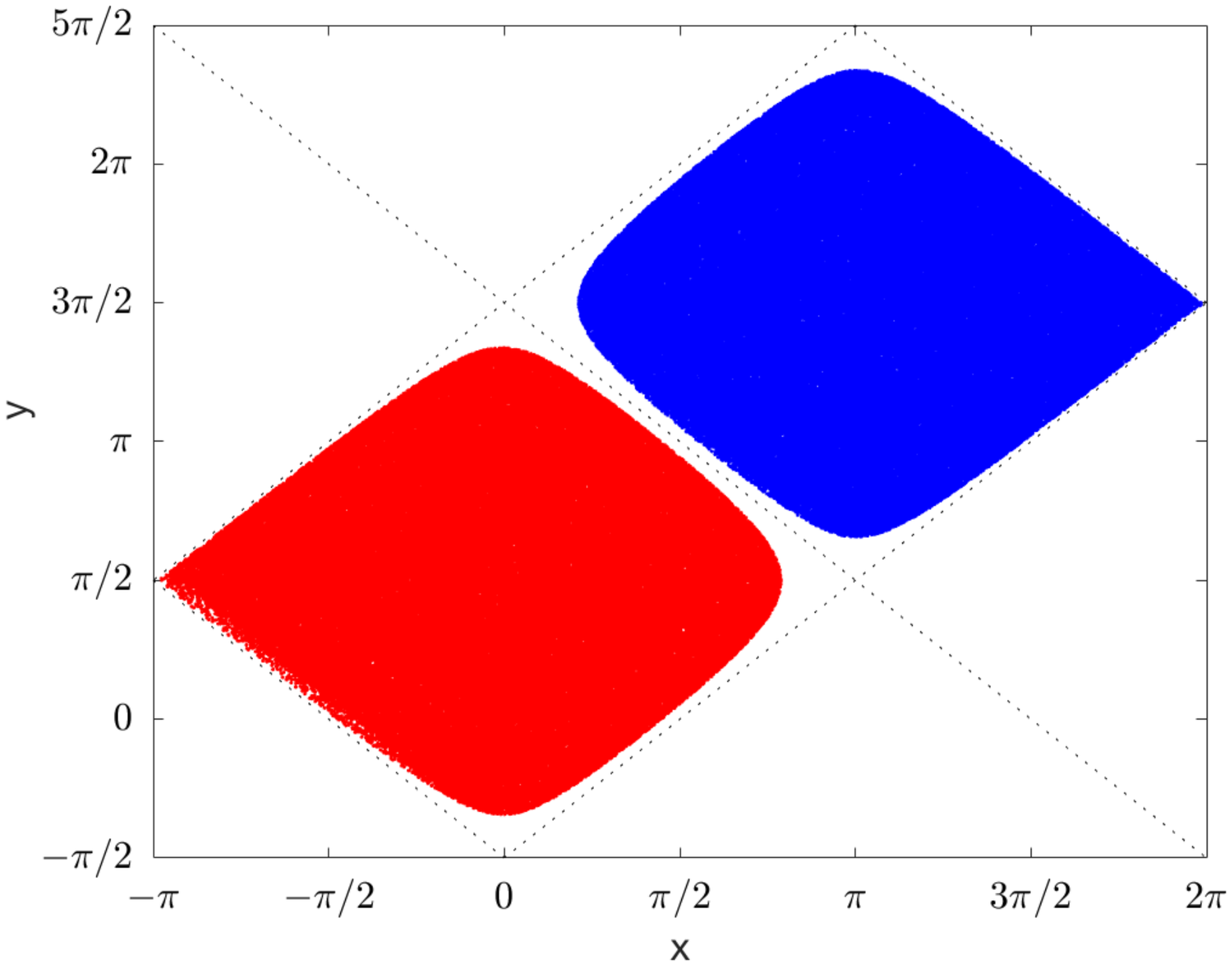}
          \hskip .1in
          \includegraphics*[width=2.2in]{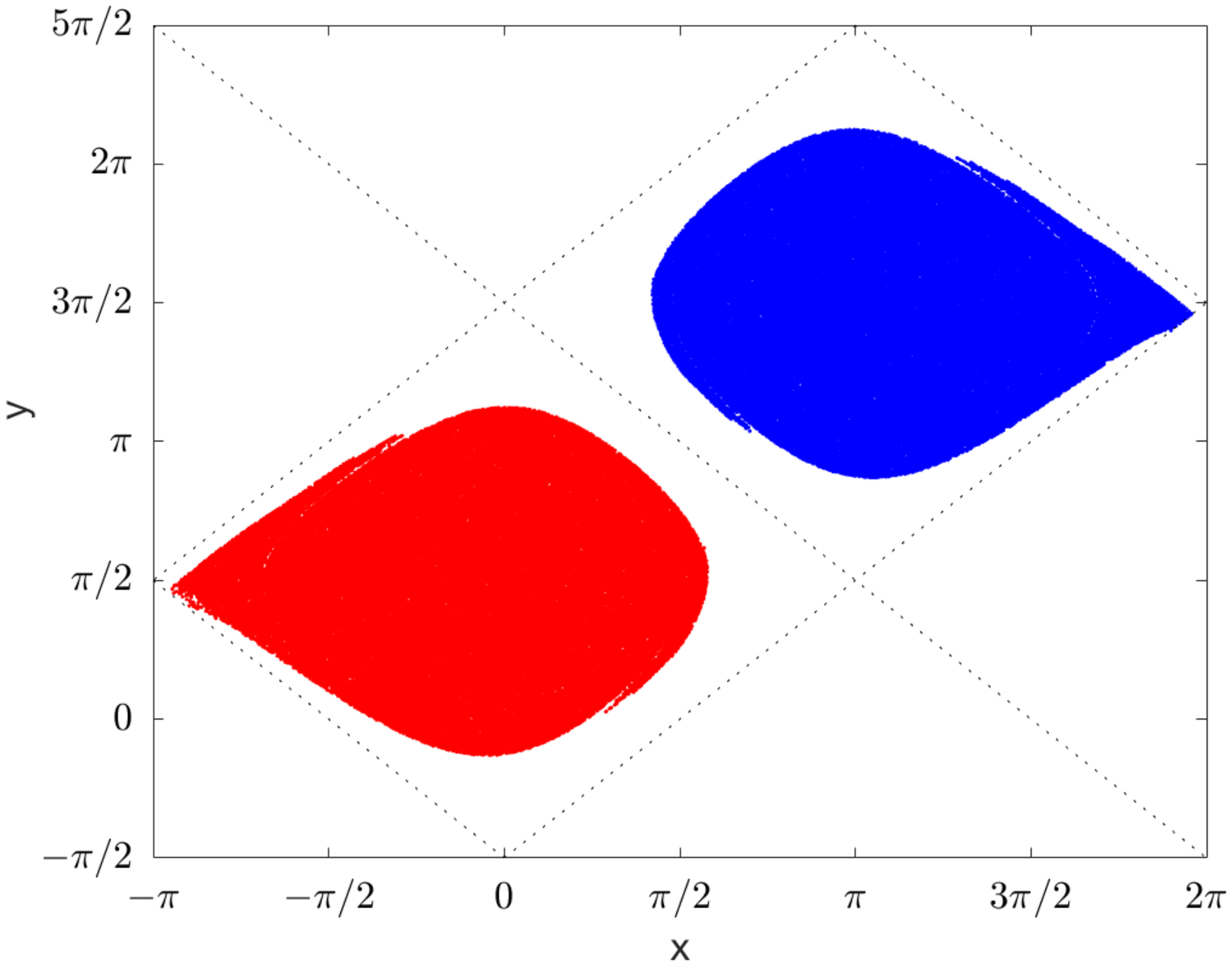} 

          \includegraphics*[width=2.2in]{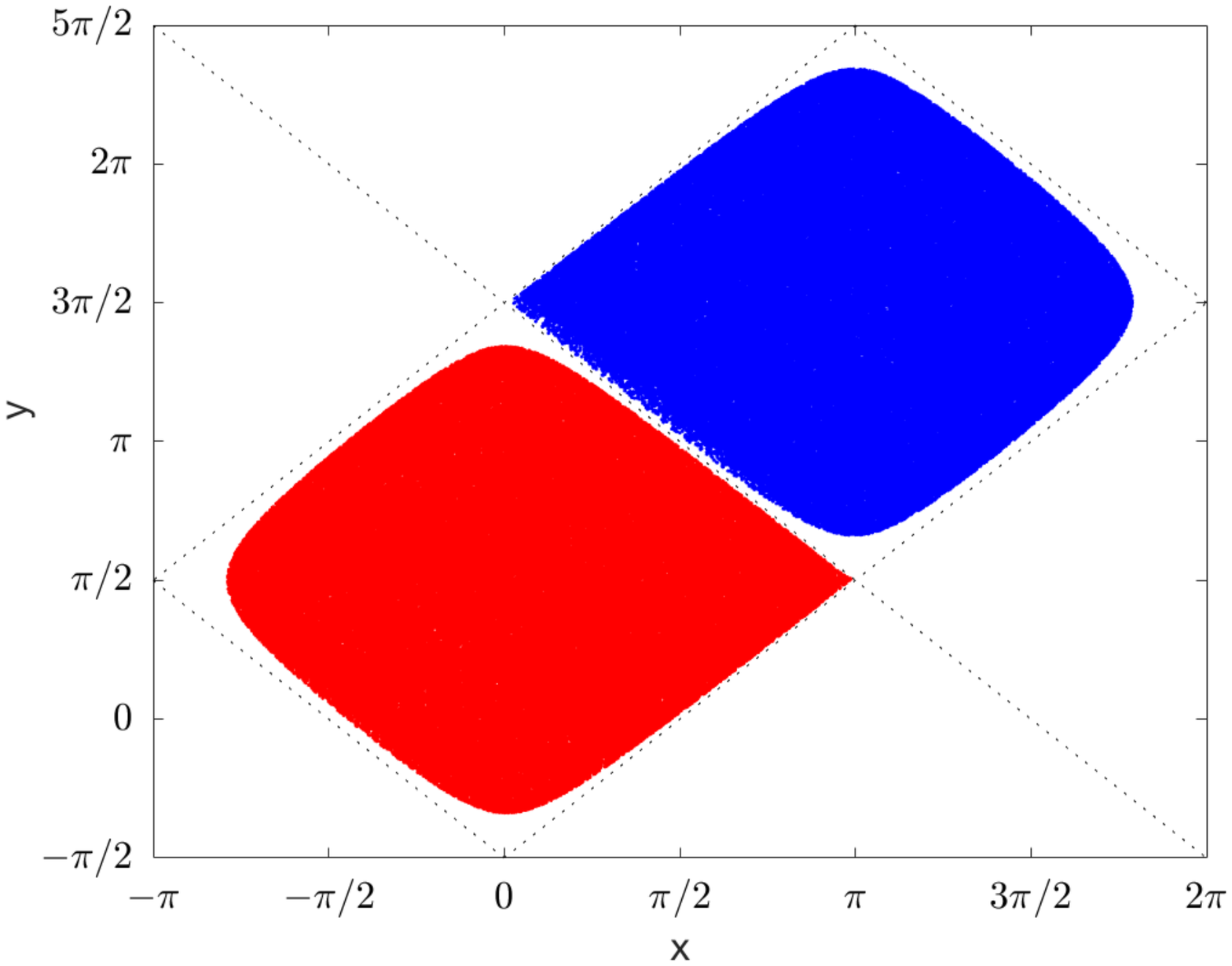}
          \hskip .1in
          \includegraphics*[width=2.2in]{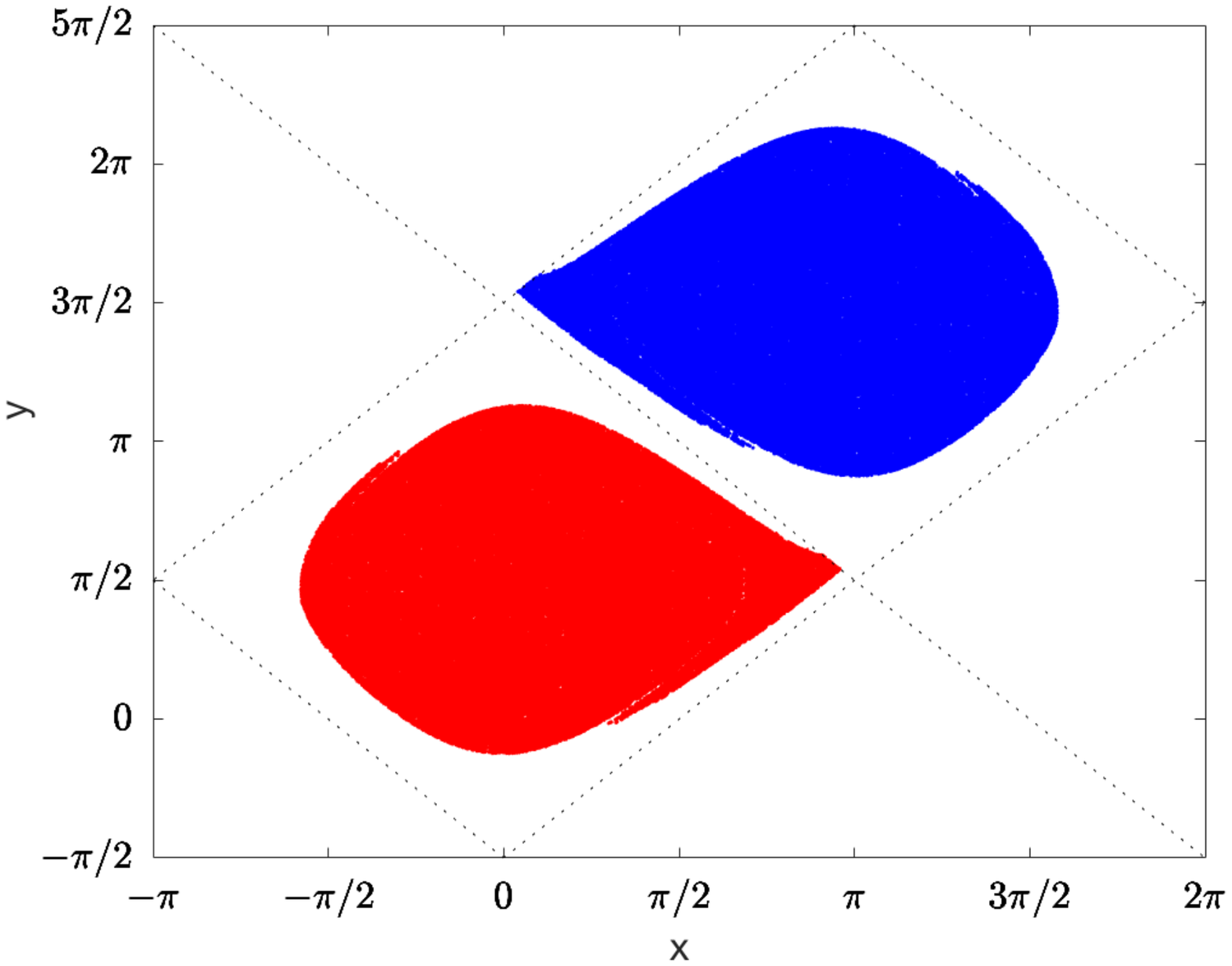}           
          
\end{center}
\caption{KAM regions of initial conditions of trajectories that do not leave a cell for $t\in [0, 50]$.  Left: $A=.05$; right: $A=.25$.  Top: $z(0)=0$; bottom: $z(0)=\pi$.}
\label{cellflow}
\end{figure}
%%%%%%%%%%%%%%%%%%%%%%%%%%%%%%%%

Figure~\ref{trajectory1} also suggests that trajectories near the edges of the cells, starting outside the KAM regions from Figure~\ref{cellflow},  cross cell boundaries multiple (indeed, infinitely many) times.  
For these trajectories, two of which are seen in Figure~\ref{xzspiral}, $z$ is bounded while one (or both) of $x$ and $y$ grows.  They may be spirals (periodic in $z$ and periodic mod $2\pi$ in $x$ and $y$), quasi-periodic in $z$, or non-periodic.
  We will prove  existence of ballistic edge orbits, such as in the bottom part of Figure~\ref{xzspiral},  in \S\ref{sec.xygrowth} by exploiting the symmetries of the system \eqref{abc1}.

%%%%%%%%%%%% FIGURE %%%%%%%%%%%%%%%%
\begin{figure}[h!]
\begin{center}
          \includegraphics*[width=3.35in]{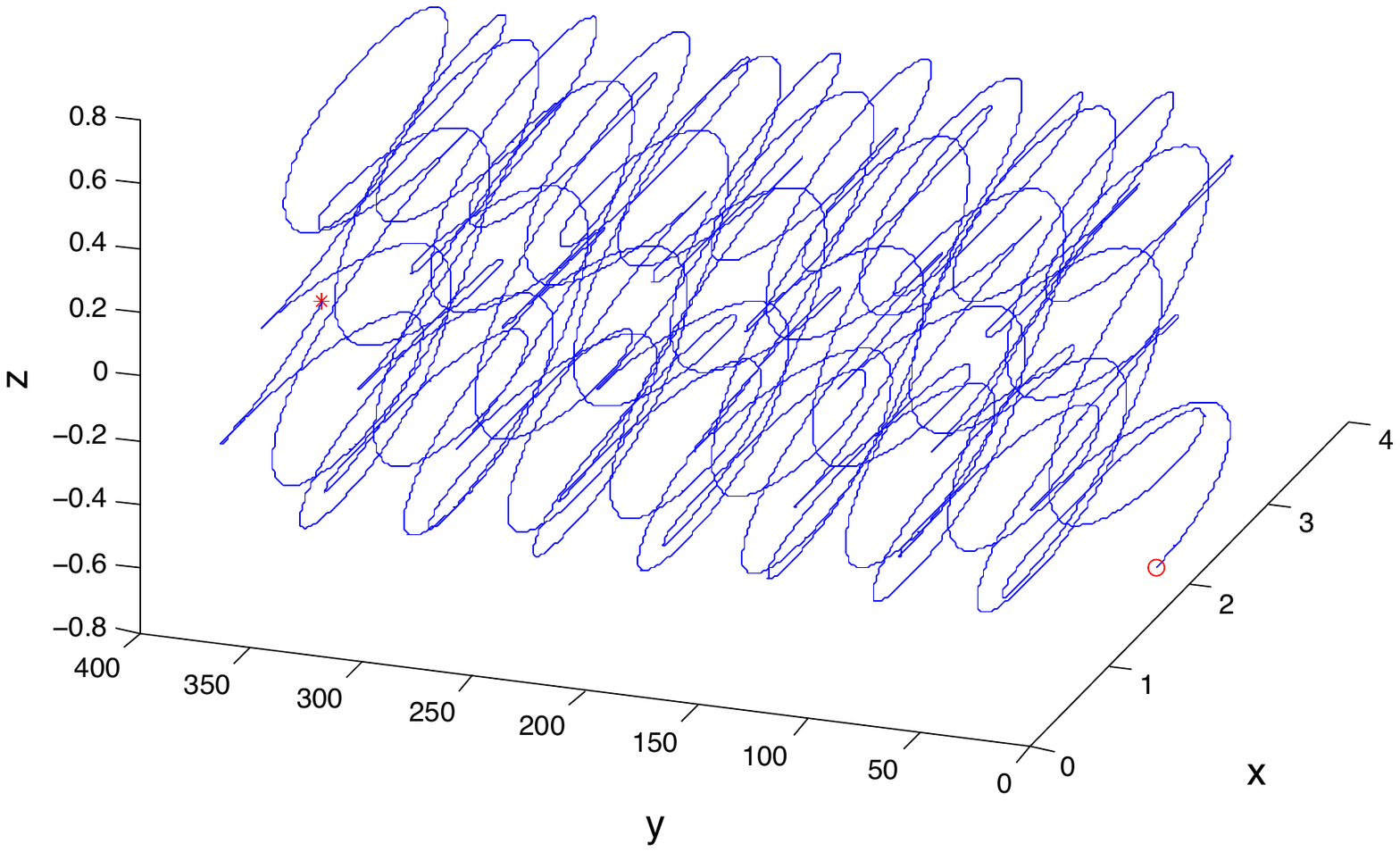}
          
           \includegraphics*[width=3.35in]{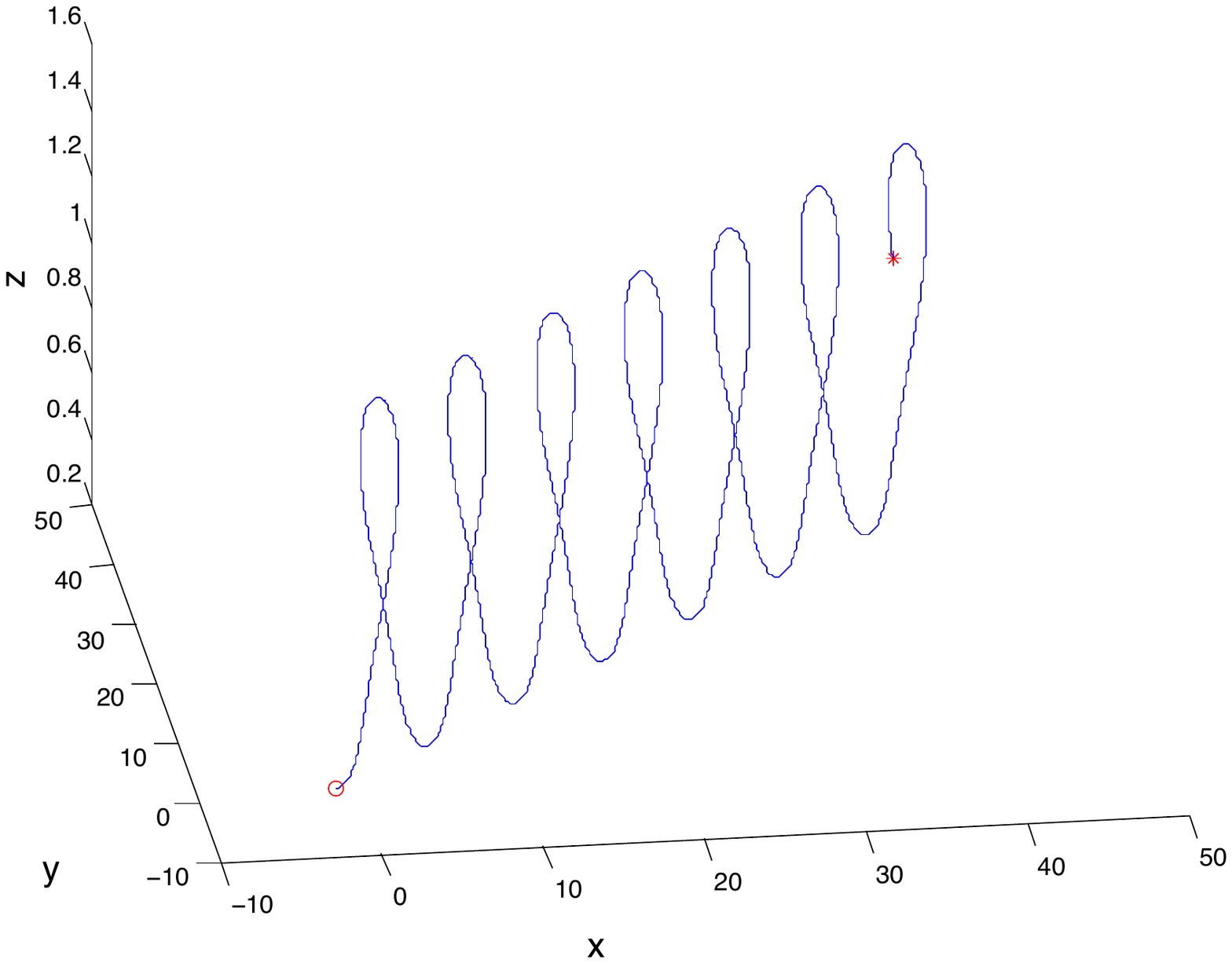}
\end{center}
\caption{Two trajectories for $(A,B,C)=(.1,1,1)$.  Top:  $(x(0),y(0),z(0))=(\pi/2 , 0, -.64)$;
bottom: $(x(0),y(0),z(0))=(-\pi/2,0, .2254)$.}
\label{xzspiral}
\end{figure}
%%%%%%%%%%%%%%%%%%%%%%%%%%%%%%%%

The flow \eqref{abc1} may also be viewed as a flow on the torus $\mathbb{T}^3$.  In Figure~\ref{edges} we plot the trajectories from Figure~\ref{xzspiral} projected on the $xy$-plane, mod $2\pi$.
Two more trajectories are seen in Figure~\ref{edge4}, illustrating the variety of behaviors that can be obtained by varying  $z(0)$.  Note that the bottom one appears to be periodic.

%%%%%%%%%%%% FIGURE %%%%%%%%%%%%%%%%
\begin{figure}[h!]
\begin{center}
          \includegraphics*[width=3.6in]{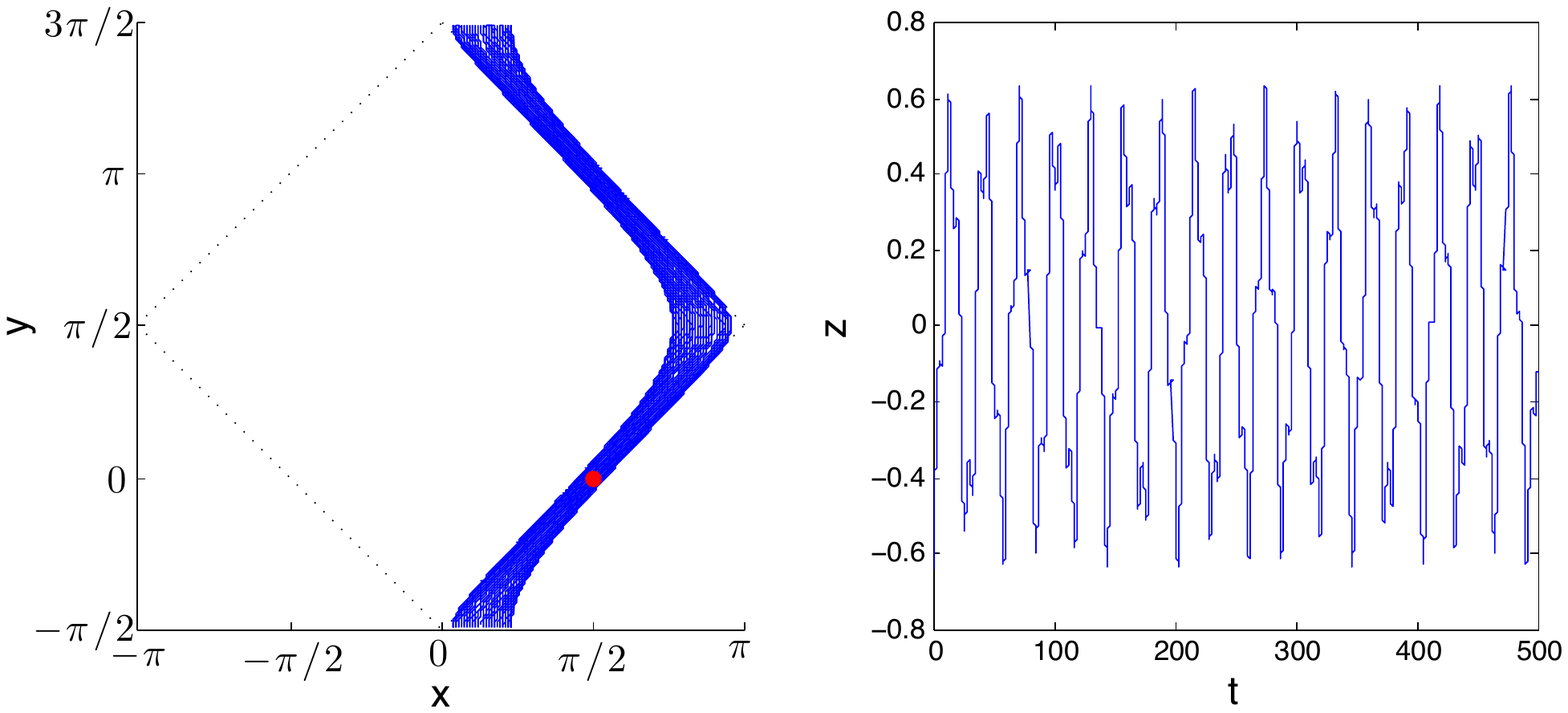}
          
           \includegraphics*[width=3.6in]{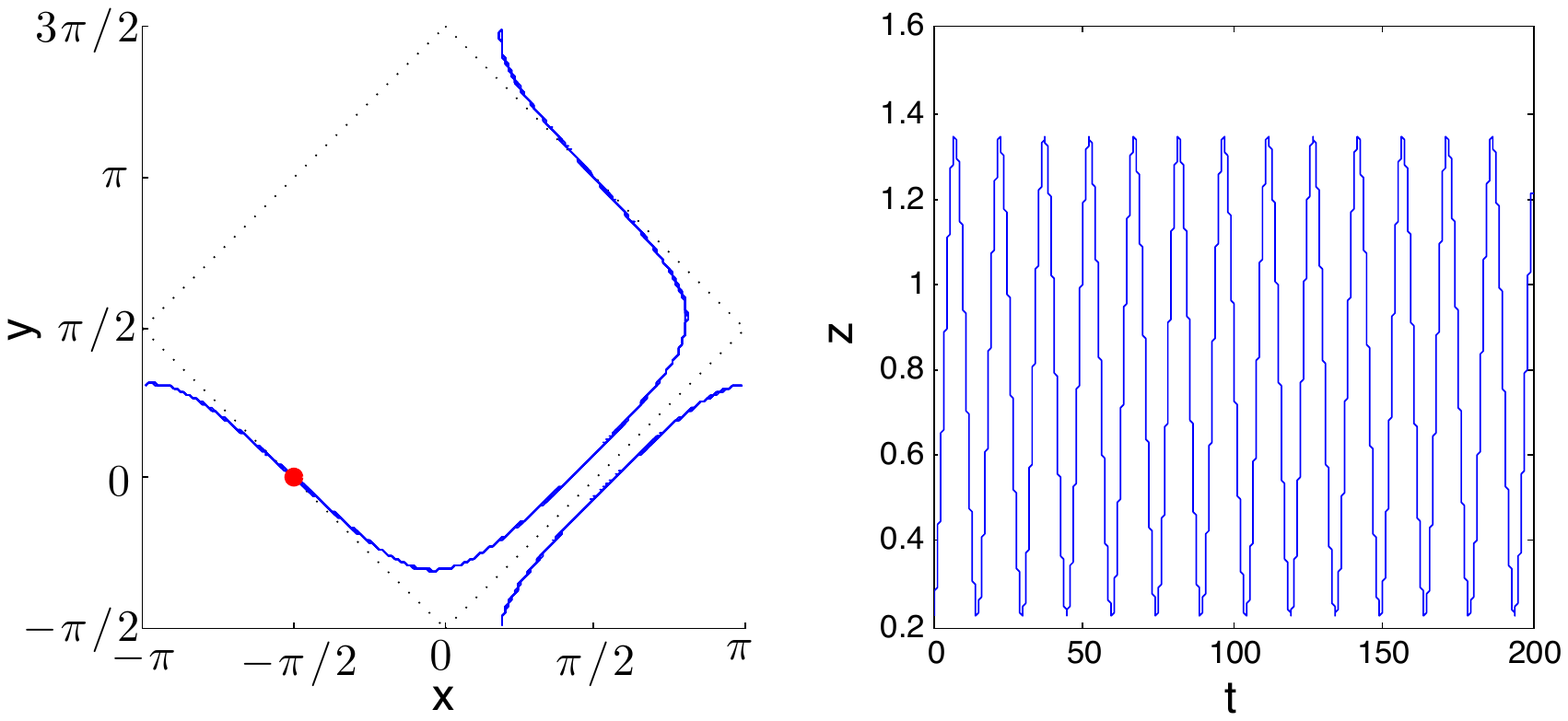}  
\end{center}
\caption{Projection of the orbits from Figure~\ref{xzspiral} on the torus in the $xy$-plane.}  
\label{edges}
\end{figure}
%%%%%%%%%%%%%%%%%%%%%%%%%%%%%%%%

%%%%%%%%%%%% FIGURE %%%%%%%%%%%%%%%%
\begin{figure}[h!]
\begin{center}
	\includegraphics*[width=3.6in]{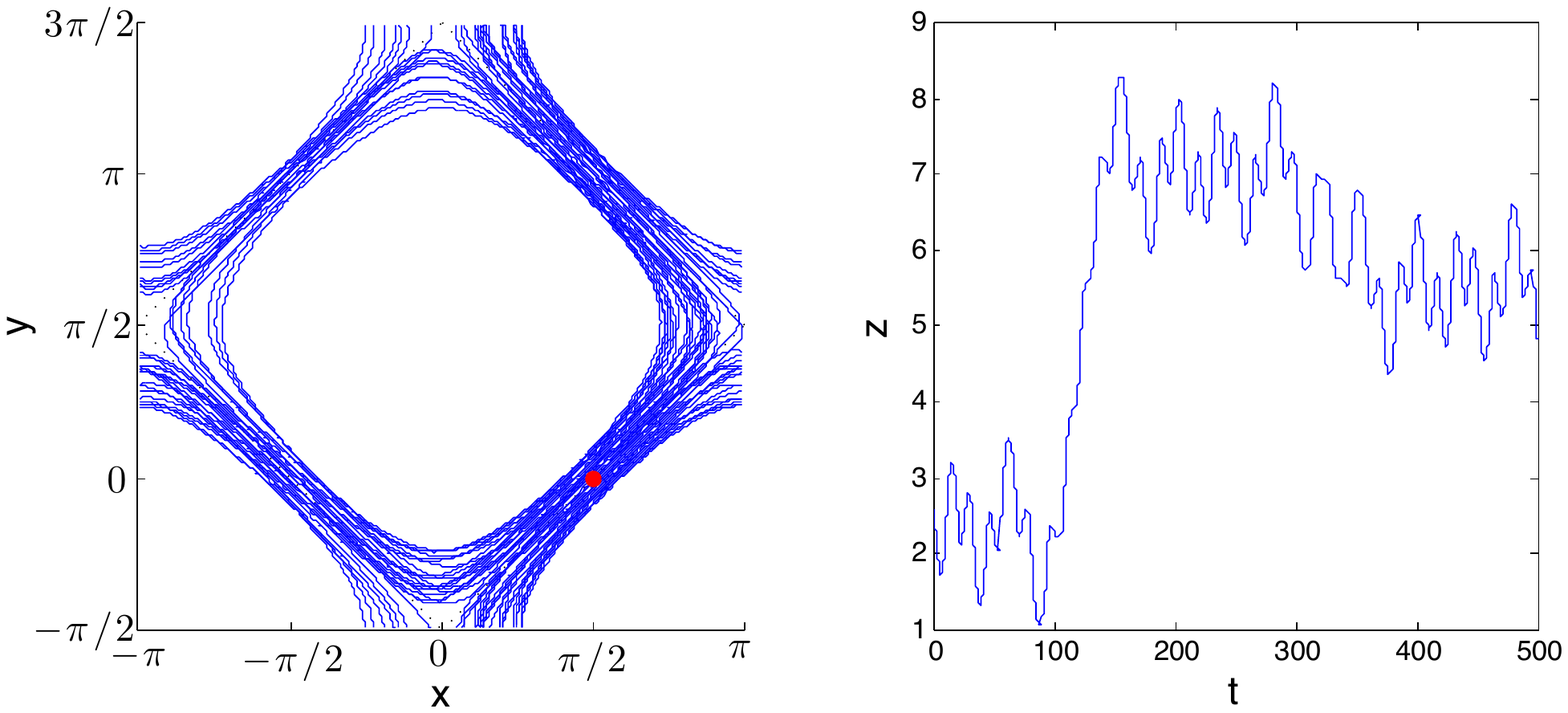}

          \includegraphics*[width=3.6in]{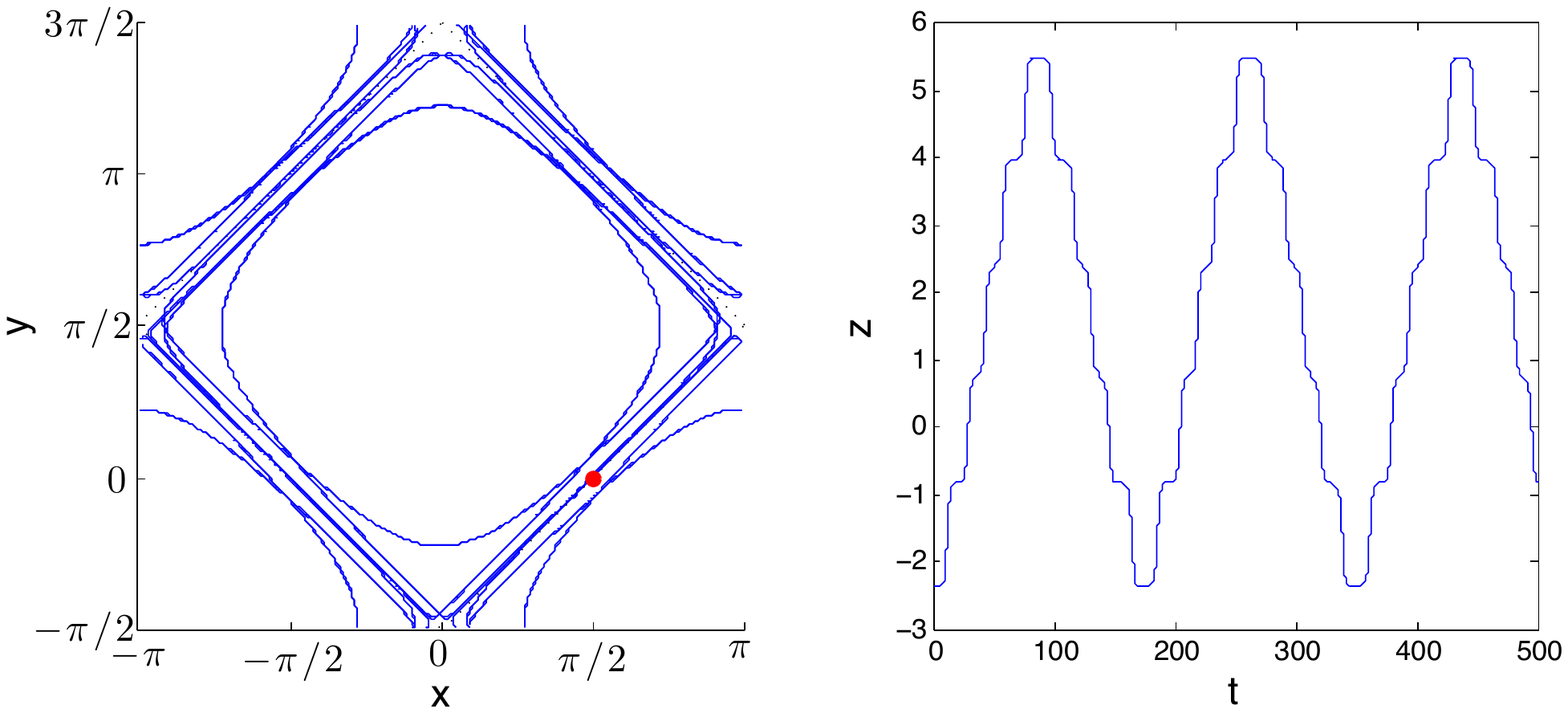}
\end{center}
\caption{Top: $z(0)= 2.6$; bottom:  $z(0) = -3\pi/4+.001$.}
\label{edge4}
\end{figure}
%%%%%%%%%%%%%%%%%%%%%%%%%%%%%%%%

\subsection{Applications to front propagation}

Let us now discuss application of results regarding existence of ballistic (spiral or edge) orbits to front propagation.  

The G-equation is a well--known model in  turbulent combustion \cite{Pet00,W85}.  
Let the flame front be the zero level set of a reference function $G(x,t)$,
where the burnt and unburnt regions are $\{G(x,t) < 0\}$ and $\{G(x,t) > 0\}$, respectively.  See Figure~\ref{frontfig1}.   The propagation of the flame front  obeys the simple motion law  $ {v}_{n}=s_l+V(x)\cdot \vec{n}$, that is,  the normal velocity is the laminar flame speed $s_l$  plus the projection of fluid velocity  $V$ on the normal direction.    This leads to the  level-set PDE 
\begin{equation}
G_t + V(x)\cdot DG + s_l |DG|=0. \label{ge1}
\end{equation}

%%%%%%%%%%%% FIGURE %%%%%%%%%%%%%%%%
\begin{figure}[h!]
\begin{center}
         \includegraphics*[height=1.5in]{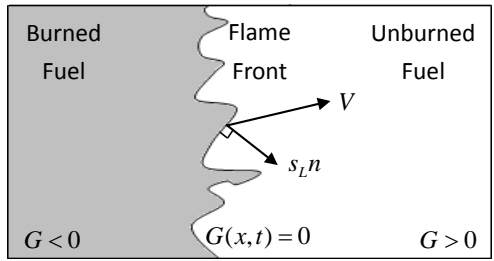}
\end{center}
\caption{A level-set model for flame propagation.}
\label{frontfig1}
\end{figure}
%%%%%%%%%%%%%%%%%%%%%%%%%%%%%%%%

In this paper,  we only consider the simplest case of constant $s_l$, and without loss of generality we assume $s_l=1$.   
For a unit vector $p\in   \R ^n$,   let $G_p(x,t)$ be the viscosity solution to 
$$
\begin{cases}
G_t + V(x)\cdot \nabla G + |\nabla G|=0 \quad \text{in $\R^n\times (0,  \infty)$}\\
G(x,0)=p\cdot x.
\end{cases}
$$
When $V$ is periodic and incompressible (i.e.,  div$(V)=0$),   \cite{CNS,XY1} show that  the limit $s_T(p,V)=-\lim_{t\to  \infty}{G_p(x,t)\over t}$ exists and is at least 1.   Here  $s_T(p,V)$ represents the turbulent flame speed (or turbulent burning velocity) in the G-equation model. Roughly speaking, the turbulent flame speed is the averaged propagation velocity in the presence of the flow $V$.  A simple example is the spreading of a wildfire fanned by strong winds (see Figure~\ref{frontfig2}).   

%%%%%%%%%%%% FIGURE %%%%%%%%%%%%%%%%
\begin{figure}[h!]
\begin{center}
\includegraphics*[height=1.5in]{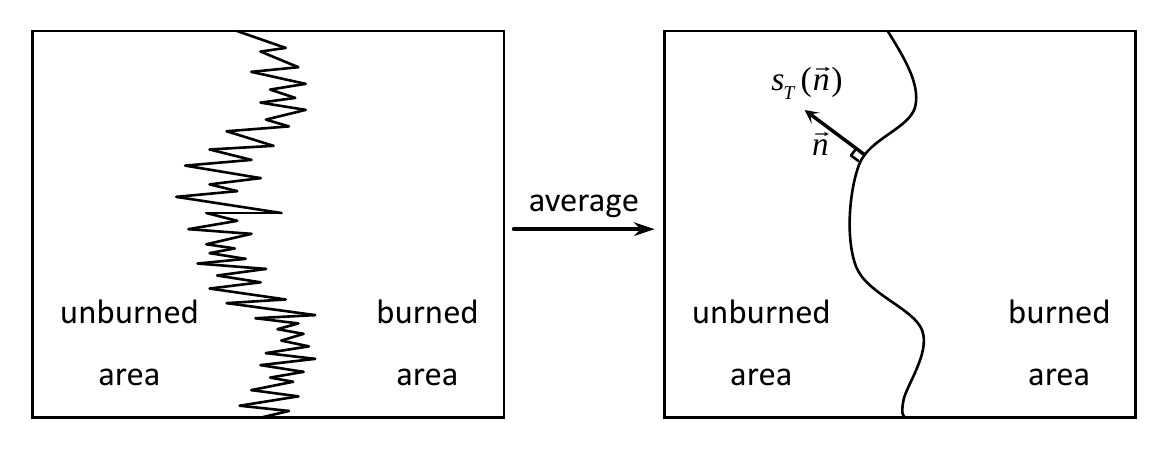}
\end{center}
\caption{Fluctuations along a flame front and their average.}
\label{frontfig2}
\end{figure}
%%%%%%%%%%%%%%%%%%%%%%%%%%%%%%%%

To determine the turbulent flame speed is one of most important unsolved problems in turbulent combustion.  A basic question
is to understand, for physically meaningful and important
classes of flows,  the dependence of the limit $s_T$ on the flow
intensity.  That is, 
to identify the growth pattern of $s_T(p,AV)$ as the real parameter $A\to \infty$.

When $n=2$,  this problem has been thoroughly studied in   \cite{XY5}.    In particular,  when $V$ is a 2D cellular flow 
 it is known \cite{O,NXY,XY3} that 
$$
s_T(p,AV)=O\left({A\over \log A}\right)
$$
for all unit vectors $p$.   However,  the problem becomes much more challenging for  $n\geq 3$ due to the presence of chaotic structures.  As a first step,  one may want to investigate when
$$
\lim_{A\to \infty}{s_T(p,AV)\over A}>0,
$$
that is, when  the turbulent flame speed grows at least linearly in $A$ for a specific direction $p$.  This question was addressed in \cite{XY5}, where it was proved  that
\be\label{limitformula}
\lim_{A\to \infty}{s_T(p,AV)\over A}=\max_{\{\xi|\ \dot \xi=V(\xi)\}}\limsup_{t\to \infty}{p\cdot \xi(t)\over t }.
\ee

That is, $s_T(p,AV)$ grows at least linearly in $A$ precisely when there is an orbit $\dot \xi=V(\xi)$ whose dot product with $p$ diverges at least linearly to $\infty$.  In this paper we will prove the existence of such orbits for each $p$ when $V$ is an ABC flow with $0<A\ll 1$ and $B=C=1$, implying linear turbulent flame speed enhancement for these flows.

\medskip

Another well-known approach to modeling  front propagation is to study  traveling wave solutions to the reaction-diffusion-advection (RDA) equation
$$
T_t+V(x)\cdot \nabla T=d\Delta T+f(T).
$$
Here $T$ represents the reactant temperature, $V(x)$  is a prescribed fluid velocity, $d$ is the molecular diffusion constant, and $f$  is a nonlinear reaction function.  
The turbulent flame speed in this model is the minimal traveling wave speed  $c_{p}^{*}(V)$ in direction $p$ (see, e.g.,  \cite{Berrev,Xin_09}).   The dependence of $c_{p}^{*}(AV)$ on the flow intensity $A$  has also been studied extensively.  For example, when $V$ is a 2D cellular flow,   it was proved in  \cite{NR} that
$$
c_{p}^{*}(AV)=O\left(A^{1/4}\right).
$$
For general incompressible flows,  it was established in  \cite{Z_10} that
\be\label{limitformula2}
\lim_{A\to \infty}{c^{*}_p(AV)\over A}=\sup_{w\in\Gamma}\int_{\Bbb T^n}(V\cdot p) w^2dx,
\ee
where
$$
\Gamma= \left \{w\in  H^1(\Bbb T^n) \,\Big|\,   V\cdot \nabla w=0 \,\&\, ||w||_{L^2(\Bbb T^n)}=1 \,\&\, \|\nabla w||_{L^2(\Bbb T^n)}^{2}\leq f'(0) \right\}.
$$

Hence, this time we need a nice tube of orbits $\dot \xi=V(\xi)$ which all travel with a positive average speed in direction $p$  (as opposed to just a single orbit in the G-equation model) to conclude that $c_{p}^{*}(AV)$ grows at least linearly in $A$.  (This difference is non-trivial; see \cite{XY3} for an example of a 3D incompressible flow, the Robert cell flow, where $\lim_{A\to \infty}{c_{p}^{*}(AV)\over A}=0$ but $\lim_{A\to \infty}{s_T(p,AV)\over A}>0$ for $p=(0,0,1)$.)  
We are not able to prove the % our analytical results are not sufficient to establish 
existence of such tubes for ABC flows $V$ with $0<A\ll 1$ and $B=C=1$ even in the KAM region since KAM type theorems do not provide any regularity of the set of quasi-periodic orbits.   However, we will present numerical evidence in \S\ref{prevalence} for the existence of such tubes.

\subsection{Organization of the paper}

The remainder of this paper is organized as follows.  
In \S\ref{sec.spiral} we will prove existence of ballistic spiral orbits in the 
KAM region, where the Hamiltonian function has a fixed sign.  
In \S\ref{sec.edgeflow} we will analyze the flow near the boundaries of the cells, 
where  the Hamiltonian changes sign, and prove existence of ballistic edge orbits there.  
We present both analytic and numerical arguments for existence 
of trajectories that cross edge boundaries infinitely many times, with the
analysis near the cell boundaries carried out in two different ways.  
We employ a perturbation method to approximate these trajectories 
based on the computable heteroclinic 
connections in the conservative case, and we also study computationally the percentage and persistence  
of the linearly growing non-KAM solutions.
All this analysis and results apply to the near-integrable regime of ABC flows.
We conclude with some remarks and conjectures in \S\ref{sec.conclusion}.

\newpage
\section{Spiral orbits in the KAM region}
\label{sec.spiral}
\setcounter{equation}{0}

When one of the parameters $A,B,C$ is small,  the ABC flow is perturbation of an integrable flow.  The standard KAM theorem can not be immediately applied due to oddness of the dimension, and various KAM-type theorems  have been developed to handle such cases (see, e.g., \cite{CS1990,MW1994}).   In this section we will prove existence of a special helical orbit (spiral orbit) such that $x,y$ are $2\pi$ periodic in $z$.   As is mentioned in the introduction and in the paragraph after  Theorem \ref{persol} below,  such an orbit  cannot be derived  from either KAM-type theorems or Melnikov methods.  Our strategy is instead to look at perturbations of ballistic linear solutions, such as the exact solution 
\be
(x,y,z) = (0,\pi/2, (B+C)t).
\label{int1}
\ee
of \eqref{abc1} with $A$=0.
Here $(x,y)$ is a fixed point of \eqref{e1}, the center of a cell in Figure~\ref{conservativeflow}, and
similar ballistic orbits 
%(constant speed in one direction, static in the other two directions) 
exist when instead either $B=0$ or $C=0$.%\medskip

%To motivate what may happen 
In the near-integrable case $0<A = \epsilon \ll 1$ and  $B,C\sim 1$, one may be tempted to seek perturbative spiral orbit solutions of the form
\be
(x,y,z)= (u(t), \pi/2 + v(t),  (B+C)t + c^*t+w(t)),
\label{int2}
\ee
with a small constant $c^*$ and a small vector function $(u,v,w)(t)$.  Then \eqref{abc1} yields
\ba
u' & = & -C\sin(v) + \eps\, \sin((B+C)t+c^*t+w), \no \\
v' & = & B\sin(u) + \eps\, \cos((B+C)t+c^*t+w), \label{abc2} \\
w' & = & B (\cos(u)-1) + C (\cos(v)-1) - c^*, \no
\ea
and we may  also take, for instance, $(u,v,w)(0)=(0,0,0)$.  One then aims to construct a bounded global in time solution $(u,v,w,c^*)$ to \eqref{abc2}.  Consider the iteration scheme
\be
(u_n,v_n,w_n,c_n)(t)\rightarrow (u_{n+1},v_{n+1},w_{n+1},c_{n+1})(t)
\label{abc3}
\ee
with $n\geq 0$, given by
\ba
u_{n+1}' &=&-Cv_{n+1} - C(\sin(v_n)-v_n) + \eps\, \sin((B+C)t+c_nt+w_n), \no \\
v_{n+1}' & = & B u_{n+1} + B (\sin(u_n)-u_n) + \eps\, \cos((B+C)t+c_n t+w_n), \label{abc4} \\
w_{n+1}' & = & B (\cos(u_{n+1})-1) + C (\cos(v_{n+1})-1) - c_{n+1}, \no
\ea
with $(u_n,v_n,w_n)(0)=0$ for any $n\geq 0$, and $(u_0,v_0,w_0)(t)\equiv (0,0,0), \ c_0=0$.
%\medskip

One might hope that the mapping (\ref{abc4}) is a contraction if $\eps \ll 1$.
Of course, for $w_{n+1}$ to be uniformly bounded in time, 
$c_{n+1}$ must be the long time average of 
$B (\cos (u_{n+1}) -1) + C (\cos (v_{n+1}) -1)$.  This condition determines $c_{n+1}$.
However, the main difficulty is to show that $(u_{n+1},v_{n+1})$ can be obtained 
from $(u_{n},v_n,w_n,c_n)$ without encountering resonance or growth in time, 
and this is generally not the case. Consider $B=C=1$, when
the solution $(u_1,v_1)$ is a  
linear combination of $\sin t,\cos t, \sin 2 t,\cos 2t$. 
Since $\sin u - u$ contains odd nonlinearities, a term like  
$(\sin t)^3 (\sin 2t)^2$ arising in the quintic component 
of $\sin u_1 - u_1$ will generate 
$\sin 3t \sin 4t = (\cos t - \cos 7t)/2$ on the right hand side of (\ref{abc4}), 
where $\cos t $ is resonant!  Also, there are no additional parameters in the $(u,w)$ equations 
to zero out such modes.  Moreover, the $c_n$'s introduce additional frequencies 
besides $1$ (intrinsic frequency) and $2$ (initial driving frequency), 
causing $(u_n,v_n)$ to be at least quasi-periodic in $t$ and making 
solutions complicated.  
Similar problems occur with small $A$ or $B$.
\medskip

Hereafter, we shall work with the near-integrable case of a small $A=\epsilon$, and the values  of $B$ and $C$ of order one.

\subsection{A Hamiltonian form and an iteration scheme}

A better way to carry out the contraction mapping approach is 
to consider the Hamiltonian form of (\ref{abc1}), as in \cite{Zas_08,Tip_96}:
\be
dx/dz = \H_p,\;\; dp/dz = - \H_x, 
\label{abc5}
\ee
with the Hamiltonian
\be
\H = B\cos x + A (y \sin z - x\cos z) + C \, \sin y, \label{abc6}
\ee
where $y=y(x,p)$ is given implicitly by
\be 
p = B y\cos x + C(1-\cos y). \label{abc7}
\ee
We are interested in a periodic 
solution $(x,p)(z)$, treating $z$ as a time variable. Then by (\ref{abc7}),
$y$ becomes a periodic function of $z$, and the  
substitution of $(x,y)=(x,y)(z)$ into 
$z' = B \, \cos x + C \,\sin y$ recovers $z$ as a function of $t$.
\medskip

A derivation of (\ref{abc5})-(\ref{abc7}) is given here for the sake of  completeness. 
First, forming ratios of the equations in (\ref{abc1}) gives
\be
dx/dz = {1\over H} \, \H_y, \;\; dy/dz = -{1\over H}\, \H_x, \label{abc1a}
\ee
with
\ba
H & = & H(x,y) = B \cos x + C\sin y , \no \\
\H & =& \H(x,y,z) = H(x,y) + A(y \sin z - x \cos z). \label{abc1b}
\ea
Define
\be
p = p(x,y) = \int_{0}^{y} H(x,y') \, dy' =  B y\cos x + C(1-\cos y)
\label{abc1c}
\ee
and identify $\H(x,y)=\H(x,p)$.  The chain rule now gives
\be
 \H_y(x,y) = \H_p(x,p) p_y = \H_p(x,p) H (x,y), \label{abc1d}
\ee
which implies the first equation of (\ref{abc5}):
\[ dx/dz = {1\over {H(x,y)}} \, \H_y(x,y) = \H_p(x,p). \]
On the other hand, (\ref{abc1a}), (\ref{abc1d}), and
\[ \H_x(x,y) = \H_x(x,p) + \H_p(x,p) p_x \]
 imply the second equation of (\ref{abc6}):
\ba
dp/dz & = & p_x dx/dz + p_y dy/dz \no \\
 & = & p_x {1\over {H(x,y)}} \, \H_y(x,y) - p_y {1\over {H(x,y)}}\, \H_x(x,y) \no \\
& = & p_x \H_p(x,p) - \H_x(x,y) \no \\
& = & - \H_x (x,p). \label{abc1e}
\ea 
\medskip

In spite of the potentially complicated $t$-dependence of $z$, 
the $z$-dependence of $(x,p)$ is simpler. 
The unperturbed solution at $A=0$ is $(x_0,y_0,z_0)=(0,\pi/2, (B+C) t + c_0)$ for any real number $c_0$,  with
$p_0 = C+B\pi/2$. Write $y=\pi/2 + \hat{y}$ and $p = C+ B\pi/2 + \hat{p}$, with $\hat{y}, \hat{p}$ small.
Then (\ref{abc7}) gives
\ba
 \hat{p} & = & B \hat{y} + B \, (\pi/2 +\hat{y})(\cos x -1) + C \sin (\hat{y}) \label{abc9a} \\
& = & (B+C) \hat{y} + O( x^2) + O(\hat{y}x^2) + O(\hat{y}^3). \label{abc9b}
\ea
The Hamiltonian (\ref{abc6}) is written as (recall that $A=\epsilon$)
\ba
  \H  &=&  B \cos (x) + C \cos (\hat{y}) + \eps ((\pi/2 + \hat{y})\sin (z) - x \cos (z)) \label{abc10a}
\\
&=& B \cos (x) + C \cos {\hat{p}\over B+C} \no
\\
&& + \eps ((\pi/2 + \hat{p}/(B+C))\sin (z) - x \cos (z))+h.o.t. 
\label{abc10b}
\ea
In terms of the hat variables, (\ref{abc5}) reads (ignoring the higher order terms h.o.t. for the moment)
\ba
dx/dz & = & \H_{\hat{p}} = -{C\over B+C} \sin  {\hat{p}\over B+C} + {\eps \over B+C} \sin z, \no \\
d\hat{p}/dz & = & - \H_x= B \sin x + \eps \cos z. \label{abc11}
\ea
Separating the linear and nonlinear terms, we rewrite (\ref{abc11}) as
\ba
dx/dz + {C \hat{p} \over (B+C)^2} & = & -{C\over B+C} (- {\hat{p}\over B+C} +\sin  {\hat{p}\over B+C})+{\eps \over B+C} \sin z, \no \\[.1in]
d\hat{p}/dz - B x & = & B (-x + \sin x) + \eps \cos z. \label{abc12}
\ea 
The homogeneous linear part is:
\be  
dx/dz + {C \over (B+C)^2} \hat{p} = 0, \; d\hat{p}/dz - B x = 0, \label{abc13}
\ee
implying that the intrinsic (resonant) frequency is $\omega_o =\sqrt{BC}/(B+C)$. In other words, 
functions such as $\sin (\omega_0 z)$ and $\cos (\omega_0 z)$ cannot appear as forcing terms on the 
right hand side of (\ref{abc12}). 
The explicit forcing terms are $\sin z$ and $\cos z$, while 
$\omega_0 \in (0,1/2)$ (when $B,C>0$). The nonlinear terms of (\ref{abc12}) contain odd powers, hence generate only 
non-resonant integer frequencies in the following iteration scheme:
\ba
dx_{n+1}/dz + {C \over (B+C)^2} \hat{p}_{n+1} & = & -{C\over B+C} (- {\hat{p}_{n}\over B+C} 
+\sin  {\hat{p}_{n}\over B+C})+{\eps \over B+C} \sin z, \no \\
d\hat{p}_{n+1}/dz - B x_{n+1} & = & B (-x_n + \sin x_n) + \eps \cos z. \label{abc14}
\ea  
We then expect a $2\pi$-periodic solution $(x,p)$ (as a function of $z$) at small $\epsilon$, via 
establishing the contraction property of the map
\[ (x_n,\hat{p}_{n}) \ra (x_{n+1},\hat{p}_{n+1}) \]
in $L^2([0,2\pi])$. 

\subsection{Existence of ballistic spiral orbits}

We now study  invertibility of the linear operator in (\ref{abc14}).
\blem \label{lem1}
If $(f,g) \in (L^2([0,2\pi]))^2$, then there exist unique solution $(x,\hat{p}) \in (H^1([0,2\pi]))^2$
of the system
\ba
dx/dz + {C p \over (B+C)^2}  & = & f(z), \no \\
d p/dz - B x & = & g(z), \label{est1}
\ea 
satisfying the estimate (with $\|\cdot \|$ the $L^2$ norm and $\|\cdot \|_1$ the $H^1$ norm)
\be
\|(x,p)\|_{1} \leq \alpha(B,C) \|(f,g)\|, \label{est2}
\ee
for a constant $\alpha(B,C)$ depending only on $(B,C)$.
\elem

\begin{proof} 
The Fourier series representations $(f,g)=\sum_{j} (f_j,g_j) \, \exp\{i\, j z\}$ and 
$(x,p)=\sum_{j} (x_j,p_j)\,  \exp\{ i\, j z\}$ turn the system (\ref{est1}) into
\[ \left ( \begin{array}{ll}
          ij & C/(B+C)^2 \\
        -B & i j 
\end{array} \right )  \left (\begin{array}{l}
                               x_j \\
                                p_j 
                             \end{array} \right ) 
= \left (\begin{array}{l}
                               f_j \\
                                g_j 
                             \end{array} \right ). 
\]
The unique solution is
\[ \left (\begin{array}{l}
                               x_j \\
                                p_j 
                             \end{array} \right ) 
= (BC(B+C)^{-2} - j^2)^{-1} \left ( \begin{array}{ll}
          ij & -C/(B+C)^2 \\
        B & i j 
\end{array} \right )\left (\begin{array}{l}
                               f_j \\
                                g_j 
                             \end{array} \right ). 
\]
Clearly
\[ |j(x_j,p_j)| \leq \alpha_0 (B,C) |(f_j,g_j)|\]
with a $j$-independent constant $\alpha_0(B,C)>0$, 
implying the estimate (\ref{est2}). $\square$
\end{proof}

\medskip

Next, we turn to the higher order nonlinear terms ignored in (\ref{abc12}). It follows 
from taking gradient of (\ref{abc9a}) with respect to $(x,\hat{p})$ that
\be
B \hat{y}_{x} + B \hat{y}_{x} (\cos x -1) - B (\pi/2 + \hat{y})\sin x + C \hat{y}_{x} \cos \hat{y} = 0
\label{abc15}
\ee
and
 \be
1 = B \hat{y}_{p} + B (\cos x -1)\hat{y}_{p} + C \hat{y}_{p} \cos \hat{y}. \label{abc16}
\ee
Hence,
\ba
\hat{y}_{x} &=& {B(\pi/2 +\hat{y})\sin x \over B+C + C(\cos \hat{y} -1) +B(\cos x -1)}, \label{abc17}\\
\hat{y}_{\hat{p}} &=& {1\over B+C\cos \hat{y} + B(\cos x -1)}. \label{abc18}
\ea
We have from (\ref{abc10a}) that
\ba
\H_{\hat{p}} & = & ( - C \sin \hat{y} + \eps \, \sin z)\, \hat{y}_{\hat{p}}   \no \\
& = & { - C \sin \hat{y} + \eps \, \sin z \over C + B + C(-1 + \cos \hat{y})+ B(-1 +\cos x)} \no \\
& = & -{C\over C+B}\sin \hat{p} + {\eps \sin z \over B+C} + N_1 (x,\hat{p}), \label{abc19}
\ea
where $|N_1| \leq c_1 (x^2 + \hat{p}^2)$ for $(x,\hat{p})\leq \delta_1 =\delta_1(B,C) \ll 1$ and 
a positive constant $c_1 =c_1(B,C)$. Similarly,
\ba
\H_x & = & - B\, \sin x - C \hat{y}_{x} \, \sin \hat{y} +\eps \hat{y}_{x} \sin z -\eps \cos z, \no \\
& = & -(B \, \sin x +\eps \cos z) - \hat{y}_{x} (C \sin \hat{y} - \eps \sin z), \no \\
& = & -(B \, \sin x +\eps \cos z) - N_2 (x,\hat{p}) +\eps N_3(x,\hat{p}) \sin z, \label{abc20}
\ea
where $|N_2| \leq c_2 (x^2 + \hat{p}^2)$ and $N_3=N_3(x,\hat{p}) \leq c_3 |x|$ 
for $(x,\hat{p})\leq \delta_2 =\delta_2(B,C) \ll 1$ and 
positive constants $c_j =c_j(B,C)$ ($j=2,3$). Finally, all the $N_j$ are Lipschitz 
continuous with a uniform Lipschitz constant $L=L(B,C)$.

The mapping $T: \; (x_n,\hat{p}_{n}) \ra (x_{n+1},\hat{p}_{n+1})$ from $L^2([0, 2\pi])^2$ to 
$H^1([0, 2\pi])^2$ is given by
\ba
dx_{n+1}/dz + {C \over (B+C)^2} \hat{p}_{n+1} & = & -{C\over B+C} (- {\hat{p}_{n}\over B+C} 
 +\sin  {\hat{p}_{n}\over B+C}) \no \\
& + & {\eps \sin z \over B+C} + N_1(x_n,\hat{p}_{n}) \no \\
d\hat{p}_{n+1}/dz - B x_{n+1} & = & B (-x_n + \sin x_n) + \eps \cos (z) \no \\
& &  - N_2 (x_n,\hat{p}_{n}) 
+\eps N_3(x_n,\hat{p}_n)\, \sin z, \label{abc21}
\ea  
solution of which is ensured by Lemma \ref{lem1}. If $\eps$ is small enough, 
$T$ maps a small ball of radius $r=\sqrt{\eps}$ in $L^2([0, 2\pi])^2$ into a radius $O(\epsilon )$ ball in 
$H^1([0, 2\pi])^2$ which embeds continuously into $L^\infty([0, 2\pi])^2$. 
By Lipschitz continuity of the $N_j$ ($j=1,2,3$) 
and Lemma \ref{lem1}, we have for constants $c_4,c_5$, depending only on $(B,C)$, that
\ba
\| T (x_{n+1},\hat{p}_{n+1}) - T(x_n,\hat{p}_n))\| & \leq &  c_4 r^2 \|(x_{n},\hat{p}_{n}) - (x_{n-1},\hat{p}_{n-1})\| \no \\ 
& + & c_5 \eps  \|(x_{n},\hat{p}_{n}) - (x_{n-1},\hat{p}_{n-1})\| \no \\
& \leq & \beta \|(x_{n},\hat{p}_{n}) - (x_{n-1},\hat{p}_{n-1})\|, \label{abc22}
\ea
where $\beta =  c_4 r^2 +c_5 \eps \leq c_6 \eps < 1$. The mapping $T$ is a contraction for small enough 
$\eps =\eps (B,C)$, whose unique fixed point is a $2\pi $-periodic solution $(x,\hat{p})$ of
\be
dx/dz = \H_{\hat{p}}(x,\hat{p},z), \;\; d\hat{p}/dz = - \H_{x}(x,\hat{p},z). \label{abc23}
\ee
Recalling $p= C + B\pi/2+\hat{p}$ and using \eqref{abc7}, 
we obtain a $2\pi$ periodic solution $(x,y,p)(z)$ to (\ref{abc5})-(\ref{abc7}). 
Finally, $z=z(t)$ is found from
\be
dz/dt = B \cos (x(z)) + C\sin ( y(z))= B \cos (x(z)) + C \cos (\hat{y}(z)), 
\label{abc24}
\ee
with $\hat y=y-\pi/2$ and $z(0)=c_0$. Because $(x(z),\hat y(z))$ is $2\pi$ periodic in $z$ and small, $z(t)$ is globally defined 
and satisfies
\[
 \int_{c_0}^{z(t)} {dz' \over B \cos (x(z')) + C \cos (\hat{y}(z'))} = t.  \]
In particular, it follows that $z$ is asymptotically linear, with
\be
\lim_{t \ra \infty} \frac{z(t)}t = \left ( {1\over 2\pi}\int_{0}^{2\pi}\, {dz' \over B \cos (x(z')) + C \cos (\hat{y}(z'))} \right )^{-1}. 
\label{abc25}
\ee

To summarize, we proved
\begin{theorem}\label{persol}
There is a small positive number $A_0=A_0(B,C)$ such that for any $A \in[0, A_0]$ and any $z(0)\in\mathbb R$, 
there is a smooth solution 
$(x,y,z)(t)$ to the ABC flow system (\ref{abc1}) such that 
 $z$ is increasing in $t$, the limit $\lim_{t \ra \infty} z(t)/t$ exists and converges to $B+C$ as $A\to 0$, and $(x,y)$ is $2\pi$-periodic in $z$. 
Thus, the solution is a ballistic spiral orbit moving helically in the direction of the $z$ axis. 
\end{theorem}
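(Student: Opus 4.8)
The plan is to work entirely in the Hamiltonian formulation \eqref{abc5}--\eqref{abc7}, treating $z$ as the evolution variable and seeking a solution that is a small $2\pi$-periodic perturbation of the integrable orbit $(x,y,p)=(0,\pi/2,C+B\pi/2)$. Writing $y=\pi/2+\hat y$ and $p=C+B\pi/2+\hat p$ with $\hat y,\hat p$ small, I would expand the implicit relation \eqref{abc7} and the Hamiltonian \eqref{abc6} to separate the linear dynamics from the genuinely higher-order terms, arriving at a system of the form \eqref{abc12}: a linear part with $2\pi$-periodic forcing $\eps\sin z,\ \eps\cos z$, plus quadratic (and higher) nonlinearities coming from $\sin x-x$ and $\sin\hat y-\hat y$. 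The free phase $c_0$ of the unperturbed orbit \eqref{int1} does not enter the Hamiltonian, so a single periodic orbit will serve for every $z(0)\in\R$, the phase being absorbed into the initial condition of the time reparametrization at the end.

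The decisive structural point --- and the reason for passing to Hamiltonian coordinates at all --- is that the homogeneous linear part \eqref{abc13} has intrinsic frequency $\omega_0=\sqrt{BC}/(B+C)$, which lies in $(0,1/2)$ whenever $B,C>0$ and in particular is never an integer. Since the forcing and all frequencies produced by the odd nonlinearities are integers, no resonance occurs, and the linear operator is boundedly invertible on $2\pi$-periodic functions. This is precisely the content I would establish as Lemma \ref{lem1}: on the Fourier side each frequency-$j$ block is governed by a matrix with determinant $BC(B+C)^{-2}-j^2$, which never vanishes for integer $j$, yielding the $L^2\to H^1$ estimate \eqref{est2} with a $(B,C)$-dependent constant.

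With the linear inversion in hand, I would set up the iteration \eqref{abc21}, where $(x_{n+1},\hat p_{n+1})$ solves the linear system against a right-hand side assembled from the forcing and the nonlinear terms evaluated at $(x_n,\hat p_n)$. The quadratic bounds $|N_1|,|N_2|\lesssim x^2+\hat p^2$ and $|N_3|\lesssim|x|$, combined with the one-dimensional Sobolev embedding $H^1([0,2\pi])\hookrightarrow L^\infty([0,2\pi])$, should confine the map to a ball of radius $r=\sqrt{\eps}$ in $L^2([0,2\pi])^2$ and make it a contraction with constant $\beta=c_4r^2+c_5\eps\le c_6\eps<1$ once $\eps$ is small enough. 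The contraction mapping principle then produces a unique small $2\pi$-periodic fixed point $(x,\hat p)(z)$, from which $y(z)$ is recovered via \eqref{abc7}; a standard bootstrap (the right-hand sides are smooth functions of the solution) upgrades the $H^1$ fixed point to a $C^\infty$ orbit.

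Finally, I would reconstruct the time parametrization by integrating \eqref{abc24}, $dz/dt=B\cos x(z)+C\cos\hat y(z)$, with $z(0)=c_0$ arbitrary. Since $(x,\hat y)$ is periodic and $O(\eps)$-small, the right-hand side stays bounded away from zero, so $z(t)$ is strictly increasing, globally defined, and determined implicitly by $\int_{c_0}^{z(t)}dz'/(B\cos x+C\cos\hat y)=t$; periodicity gives the asymptotic slope \eqref{abc25} as the reciprocal of the period-average of $1/(B\cos x+C\cos\hat y)$, which tends to $B+C$ as $\eps\to0$ because $(x,\hat y)\to0$ uniformly. I expect the main obstacle to be exactly the resonance issue flagged after \eqref{abc4}: in the original variables \eqref{abc2} the intrinsic frequency is an integer, so the naive iteration \eqref{abc4} generates secular terms and cannot close. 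The real work is recognizing that the Hamiltonian change of variables shifts the intrinsic frequency off the integer lattice, rendering the linear operator invertible; once that is secured, the remainder is a routine small-divisor-free contraction estimate.
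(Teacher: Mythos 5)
Your proposal is correct and follows the paper's own proof essentially step for step: the same Hamiltonian change of variables whose whole point is shifting the intrinsic frequency to the non-integer $\omega_0=\sqrt{BC}/(B+C)\in(0,1/2)$, the same Fourier-side inversion with block determinant $BC(B+C)^{-2}-j^2$ (Lemma~\ref{lem1}), the same contraction iteration \eqref{abc21} on a ball of radius $\sqrt{\eps}$ in $L^2([0,2\pi])^2$ using the $H^1\hookrightarrow L^\infty$ embedding, and the same time reparametrization via \eqref{abc24}--\eqref{abc25} to obtain the ballistic slope tending to $B+C$. Your explicit smoothness bootstrap and the remark that the free phase $c_0$ handles arbitrary $z(0)$ are points the paper leaves implicit, but they are routine and do not constitute a different route.
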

\medskip

Obviously, a similar result holds for small $B$ (or $C$), with the spiral 
orbit moving helically along the $x$ (or $y$) axis.

\begin{figure}[h!]  
\begin{center}
\includegraphics[height=2in]{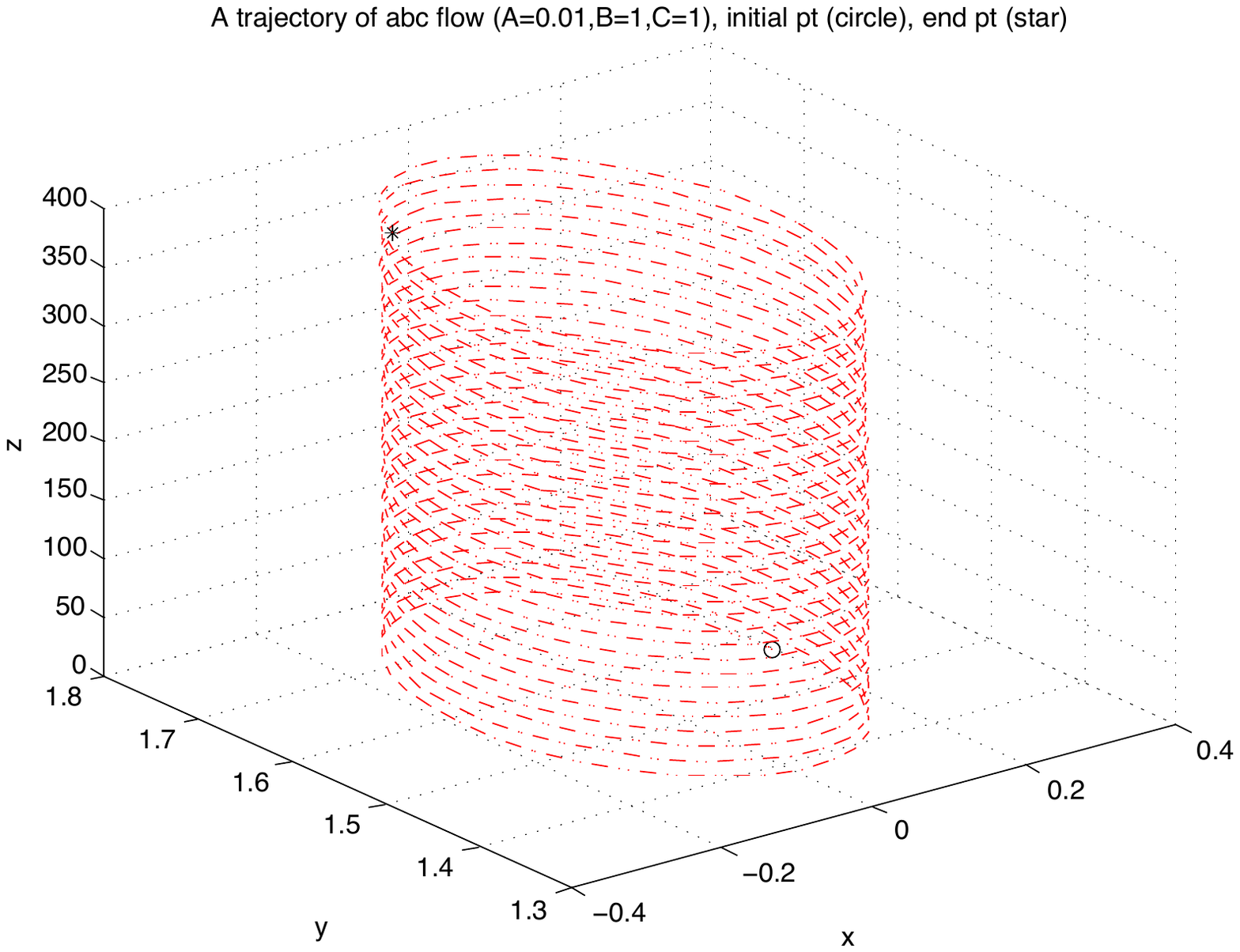}
\hskip .2in
\includegraphics[height=1.7in]{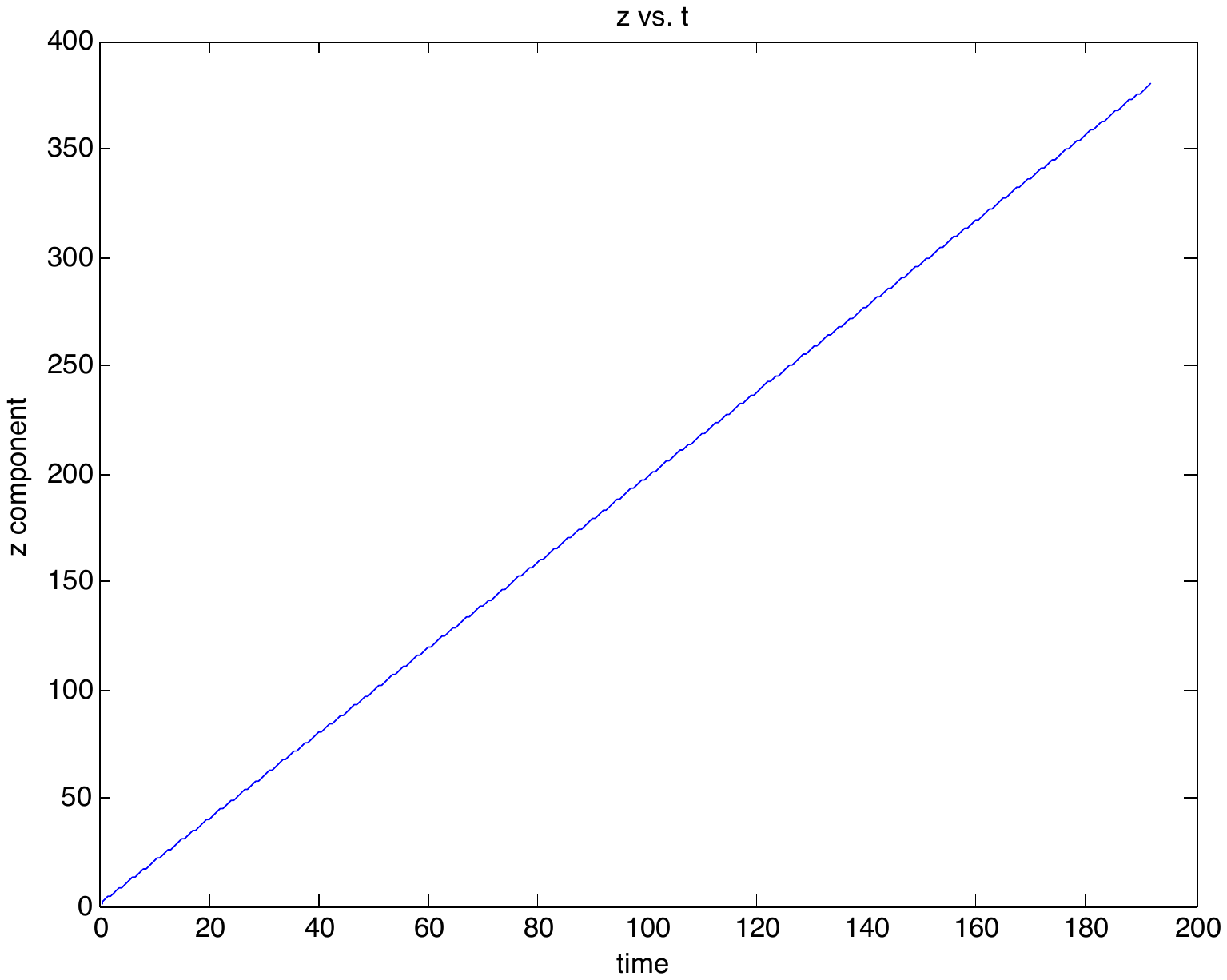}
\caption{Left: a trajectory of the ABC flow for $(A,B,C)=(0.01,1,1)$, with the initial point 
$(0.2,\pi/2,0)$ marked by a circle and the end point  marked by a star.  Right:  the $z$ component of trajectory.}
\label{KAM1}
\end{center}
\end{figure}

\begin{figure}[h!]
\begin{center}
\includegraphics[scale=0.32]{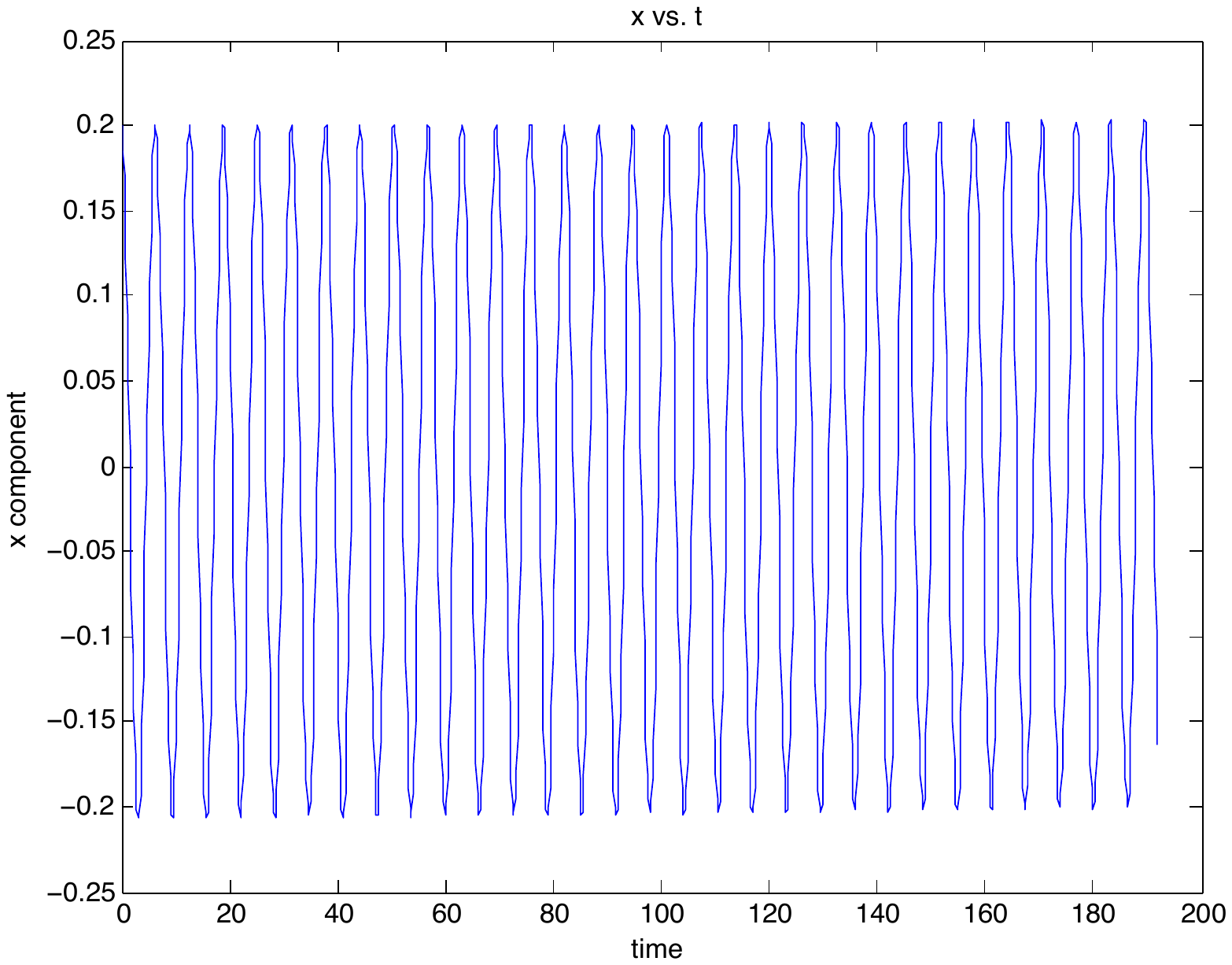}
\hskip .2in
\includegraphics[scale=0.32]{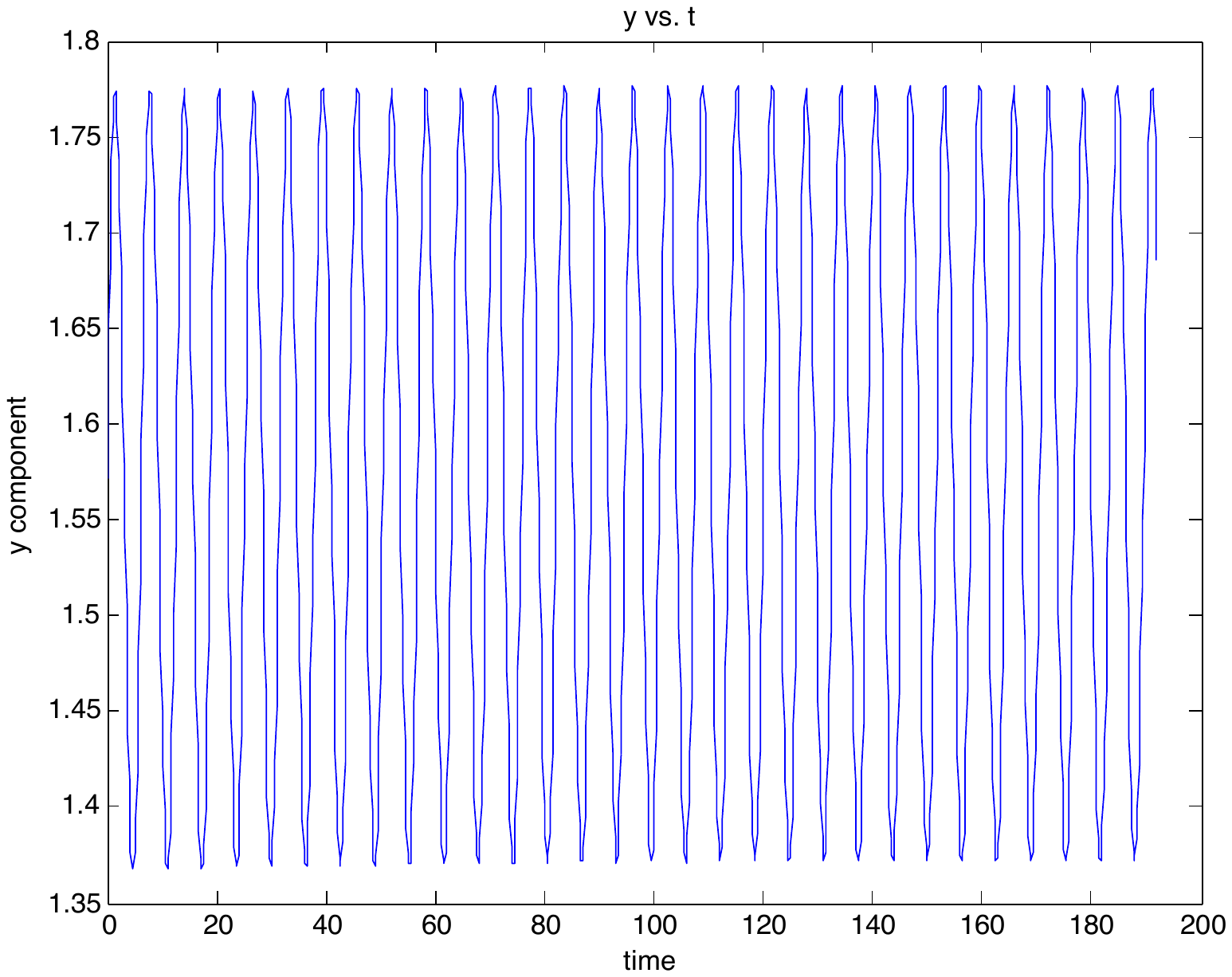}
\captionof{figure}{The $x$ (left) and $y$ (right) components of the trajectory in Figure \ref{KAM1}.}
\label{KAM3}
\end{center}
\end{figure}

We note that writing the ABC flow in the Hamiltonian form also facilitates  KAM-type analysis.  Let us introduce the usual action-angle variables  $(I, \phi)$ within the  cell in the $(x,y)$ plane centered at $(0,  {\pi\over 2})$.   Consider the case $B=C=1$ and $0<A=\epsilon \ll 1$.  The relation between $I$ and $H$ (i.e. $H=H(I)$) is given by $2\pi I=$ area of $\{ H\leq \cos x+\sin y \leq 2\}$,  so  $I\in  [0,\pi)$.  The ABC flow system (\ref{abc1})  can be written as the perturbed  action-angle-angle system
$$
\begin{cases}
\dot I=\epsilon F_0(I, \phi,  z)\\
\dot \phi=H'(I)+\epsilon F_1(I,  \phi, z)\\
\dot z=H(I)
\end{cases}
$$
for suitable smooth functions $F_0$ and $F_1$. According to  Theorem 5.1 in \cite{MW1994},  there exists a family of  perturbed invariant tori  parametrized by $\omega^{*}=H'(I^{*})$ for  $I^{*}\in J(\epsilon)\subset   [0,  \pi)$.  The measure of  $J(\epsilon)$ tends to $\pi$ as $\epsilon\to 0$.  Unlike the regular KAM theorem,  we do not know which invariant torus will survive after perturbation.  Also,  the frequency of a perturbed torus might not be the same as the unperturbed one.  For small $\epsilon$,   quasi-periodic orbits on invariant tori can also be parametrized by $z$ (i.e.  $I=I(z)$ and $\phi=\phi(z)$),  and they are usually not periodic.  If an orbit happens to be periodic in $z$, the period is close to $2\pi H(I)\over H'(I)$.  In particular,  near $H=2$ (or equivalently $I=0$),  we have that 
$$
I=2-H+C(2-H)^2+O(|2-H|^2)
$$
for some positive constant $C$.  Hence any periodic orbit  near the line $(0,  {\pi\over 2}, z)$  from KAM-type theorems has period $\approx 4\pi$ in $z$.

\begin{figure}[h!]  
\begin{center}
\includegraphics[height=2.5in]{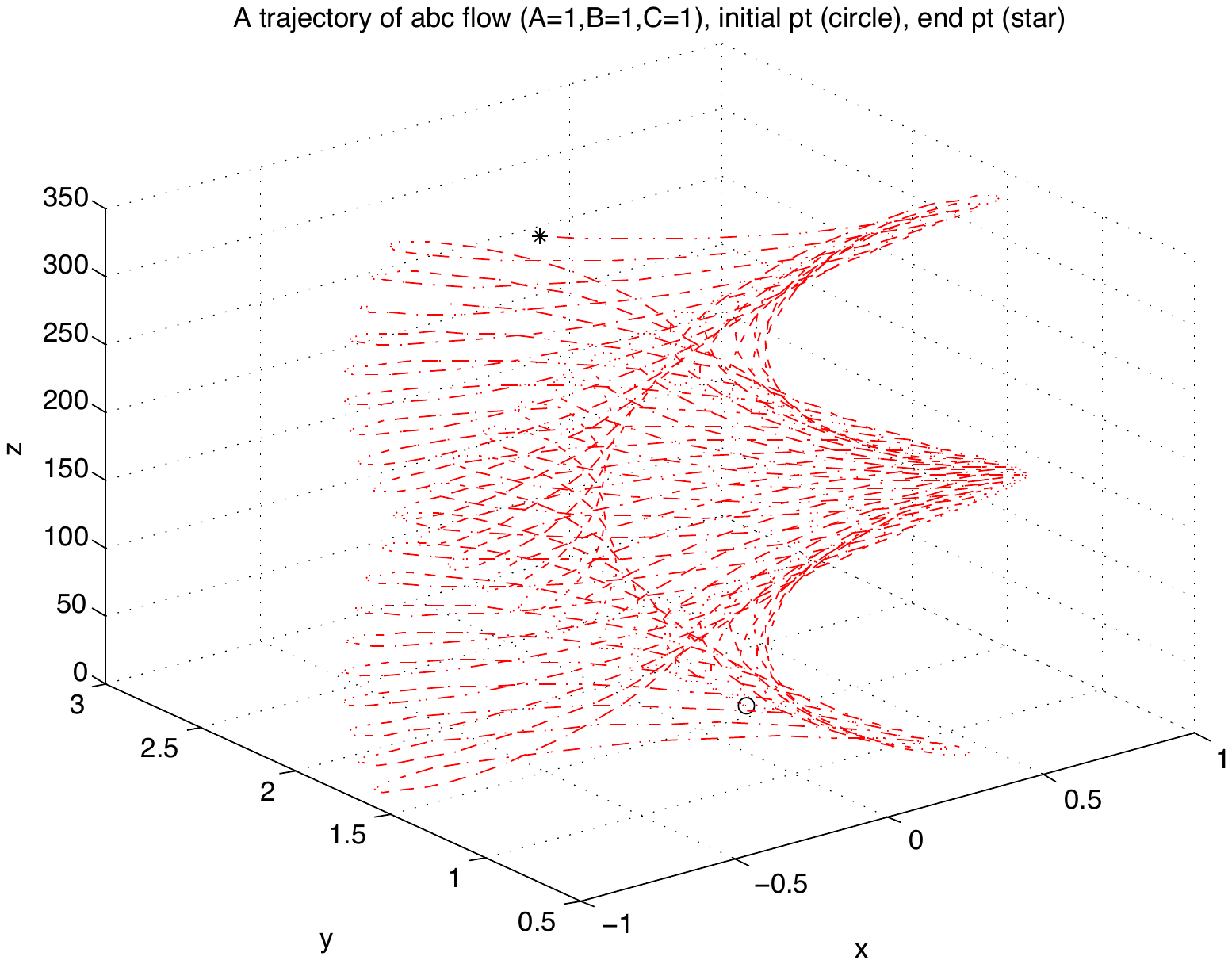}
\hskip .2in
\includegraphics[height=1.25in]{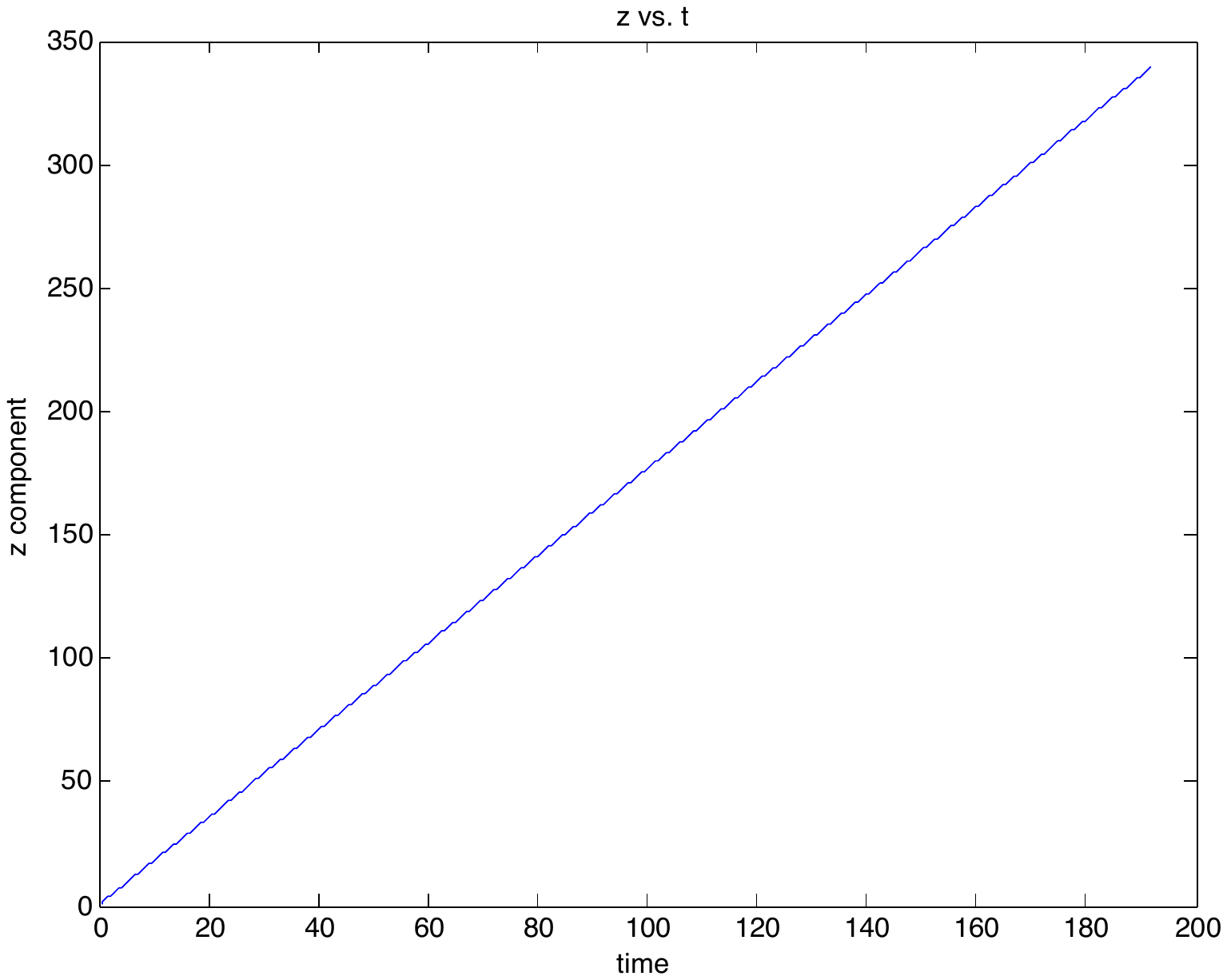}
\caption{Left: a trajectory of the ABC flow for $(A,B,C)=(1,1,1)$, with the initial point 
$(0.2,\pi/2,0)$ marked by a circle and the end point marked by star.  Right:  the $z$ component of the trajectory.}
\label{KAM4}
\end{center}
\end{figure}

\begin{figure}[h!]
\begin{center}
\includegraphics[height=1.5in]{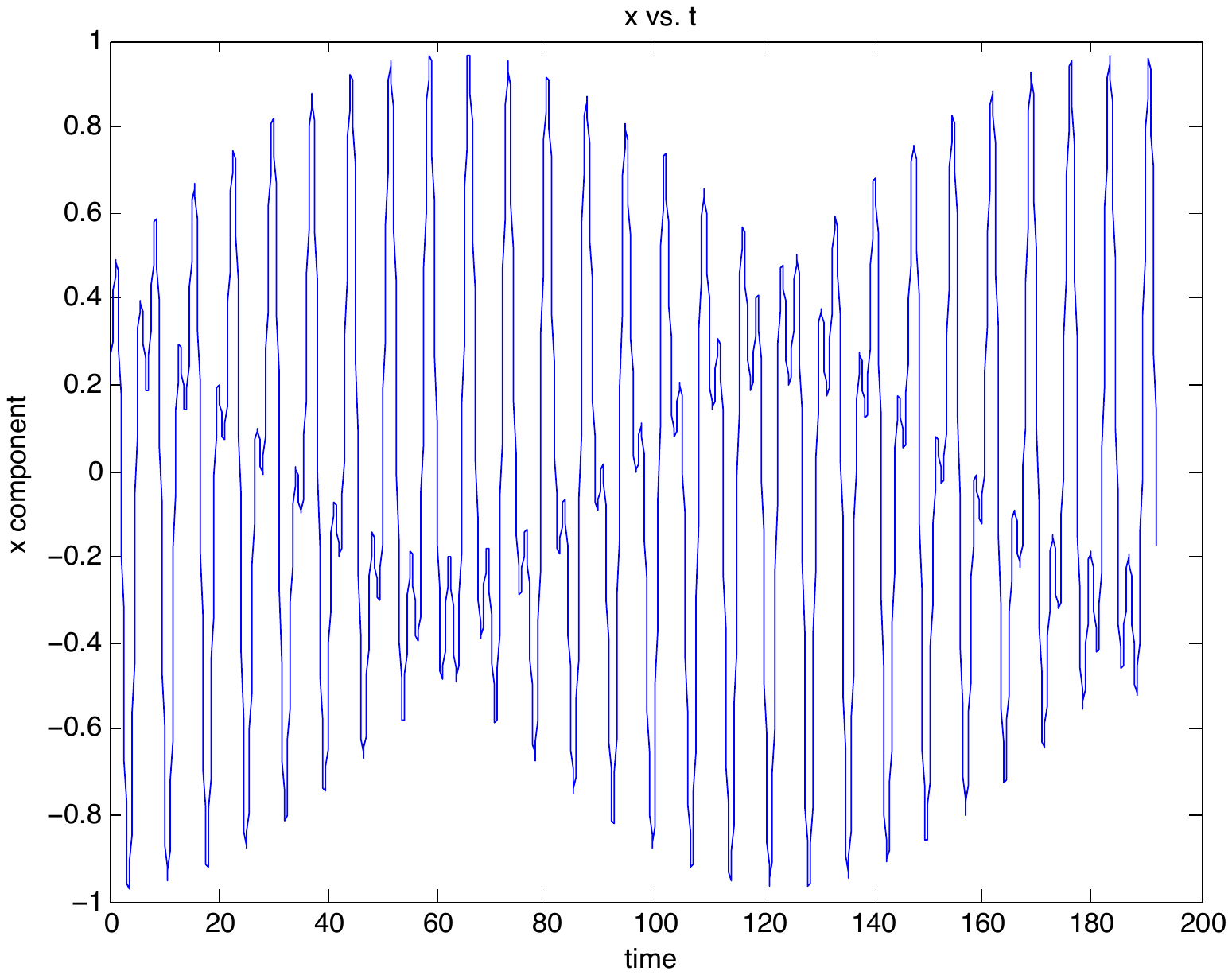}
\hskip .2in
\includegraphics[height=1.5in]{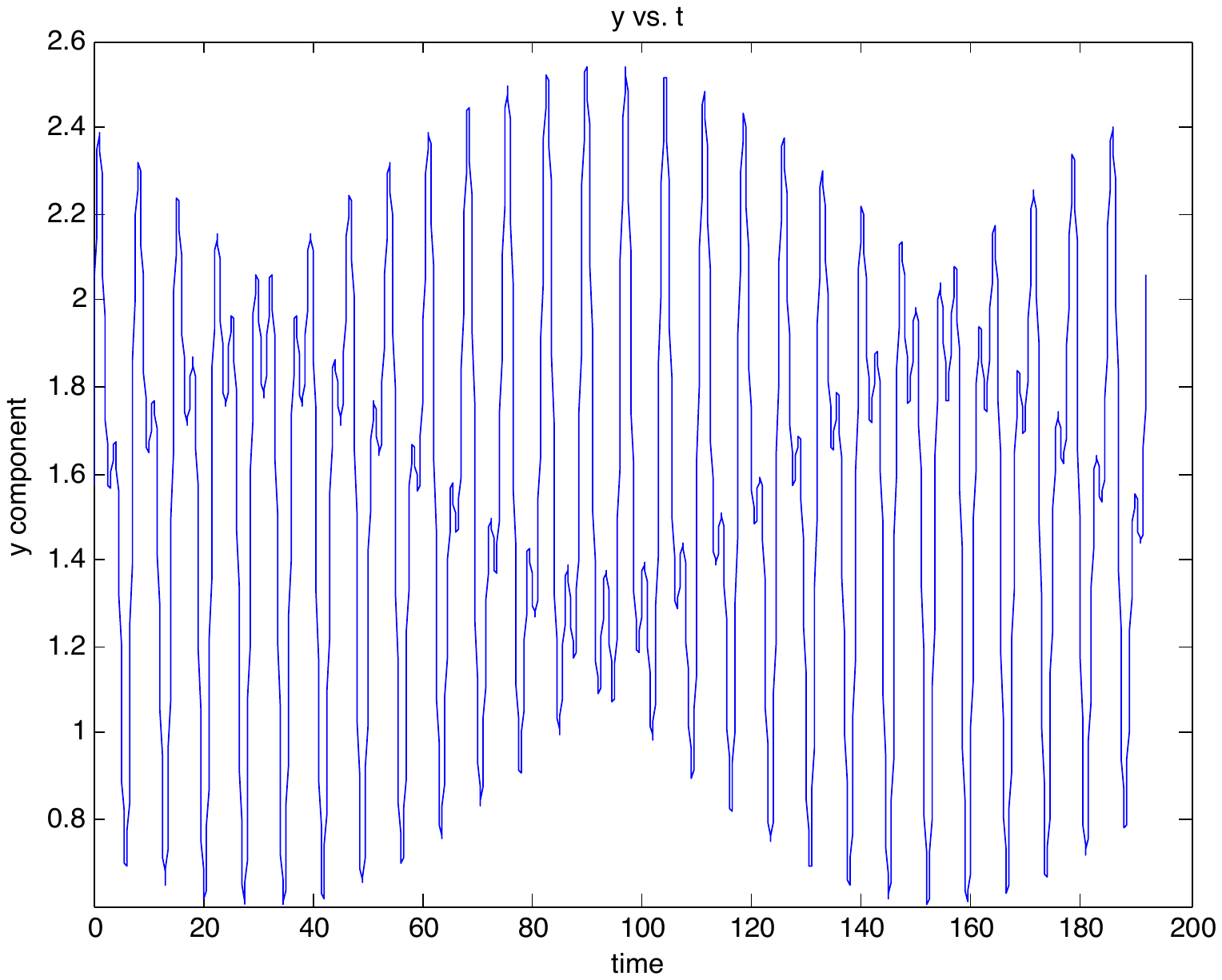}
\captionof{figure}{The $x$ (left) and $y$ (right) components of the trajectory in Figure~\ref{KAM4}.}
\label{KAM6}
\end{center}
\end{figure}

\newpage
\section{Edge orbits in the non-KAM region}
\label{sec.edgeflow}
\setcounter{equation}{0}

The results of the previous section hold for trajectories that remain inside a single cell in the $xy$-plane, where $H$ has the same sign.  Now we turn to the case in which trajectories cross cell boundaries, that is, the lines $H(x,y)=0$.  In this case, $z$ cannot be treated as a time variable.  In particular, \eqref{abc7} does not define $y$ uniquely, so the analysis of the previous section does not apply.  However, symmetries of the system do imply existence of spiral orbits such as that in Figure~\ref{xzspiral}, which we prove in the following subsection.  Afterwards we derive first order perturbation results and perform numerics to draw conclusions about  behavior of generic trajectories in the non-KAM region.

Throughout this section we assume $A=\epsilon > 0$ and $B=C=1$, so that
\begin{eqnarray}
x' &=& \cos y + \epsilon \sin z \nonumber
\\
y' & = & \sin x + \epsilon \cos z
\label{abceps}
\\
z' & = & \cos x + \sin y.
\nonumber
\end{eqnarray}
We denote solutions of this system $X(t)=(x(t),y(t),z(t))$.

%
%
%\newpage
\subsection{Existence of  ballistic edge orbits}
\label{sec.xygrowth}

The existence of ballistic edge orbits  follows from the following time-reversal symmetries of the system \eqref{abceps}:
\begin{eqnarray}
&& (t,x,y,z)\rightarrow (-t, -\pi - x,  - y,  z)
\label{sym1}
\\
&& (t,x,y,z)\rightarrow \left(- t,  \frac{\pi}{2} - y,  \frac{\pi}{2} - x, \frac{\pi}{2} - z \right)
\label{sym2}
\\
&& (t,x,y,z)\rightarrow (- t,  -x, y,  \pi -z)
\label{sym3}
\end{eqnarray}

\begin{theorem}
\label{zperiodic}
For all small enough $\epsilon>0$, the following hold.

(a)  There exists $T>0$ and four trajectories $X(t)$ of \eqref{abceps} that satisfy
\begin{eqnarray*} 
X(t+T) &=& X(t) + (2\pi, 2\pi,0)
\\
X(t+T) & = & X(t) - (2\pi, 2\pi,0)
\\
X(t+T) & = & X(t) + (2\pi, -2\pi,0)
\\
X(t+T) & = & X(t) - (2\pi,  -2\pi,0)
\end{eqnarray*}

(b)  There exists $T>0$ and four trajectories $X(t)$ of \eqref{abceps} that satisfy
\begin{eqnarray*} 
X(t+T) &=& X(t) + (2\pi, 0, 0)
\\
X(t+T) & = & X(t) - (2\pi, 0, 0)
\\
X(t+T) & = & X(t) + (0, 2\pi,0)
\\
X(t+T) & = & X(t) - (0, 2\pi,0)
\end{eqnarray*}

\end{theorem}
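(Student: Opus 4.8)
The plan is to read \eqref{sym1}--\eqref{sym3} as time-reversing involutions of phase space and to manufacture each ballistic orbit as a trajectory segment running between two parallel symmetry sections exactly one period apart. Writing
\[
R_1(x,y,z)=(-\pi-x,-y,z),\quad R_2(x,y,z)=(\tfrac\pi2-y,\tfrac\pi2-x,\tfrac\pi2-z),\quad R_3(x,y,z)=(-x,y,\pi-z),
\]
each $R_i$ is an involution enjoying the reversibility property that $X(t)$ solves \eqref{abceps} if and only if $R_iX(-t)$ does; in flow notation $\Phi_{-t}=R_i\Phi_tR_i$. Since the vector field in \eqref{abceps} is $2\pi$-periodic in every variable, each lattice translation $\tau_w$ with $w\in 2\pi\mathbb{Z}^3$ is a forward symmetry commuting with $\Phi_t$, and therefore $\tau_wR_i$ is again a reversing symmetry, and an involution precisely when $L_iw=-w$, where $L_1=\mathrm{diag}(-1,-1,1)$, $L_2\colon(x,y,z)\mapsto(-y,-x,-z)$, and $L_3=\mathrm{diag}(-1,1,-1)$ are the linear parts of $R_1,R_2,R_3$.

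I would then isolate the mechanism that turns symmetry into ballistic transport. Let $R$ be one of these reversing involutions with linear part $L$, let $w\in 2\pi\mathbb{Z}^3$ satisfy $Lw=-w$, and set $R'=\tau_wR$, which is again a reversing involution. If a solution $X$ meets $\mathrm{Fix}(R)$ at $t=0$ and $\mathrm{Fix}(R')$ at $t=T/2$, then reversibility about these two instants gives $X(-t)=RX(t)$ and $X(T-t)=R'X(t)$, and composing them yields
\[
X(t+T)=R'X(-t)=R'R\,X(t)=\tau_wX(t)=X(t)+w\qquad(t\in\mathbb{R}).
\]
Thus any trajectory joining the two parallel symmetry lines automatically closes up into a ballistic orbit of lattice shift $w$. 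Taking $R=R_1$, whose fixed set is the vertical fiber $\{(-\tfrac\pi2,0,z)\}$ over the network point $(-\tfrac\pi2,0)$ (where $H=\cos x+\sin y=0$), the choice $w=(2\pi,2\pi,0)$ gives $\mathrm{Fix}(R')=\{(\tfrac\pi2,\pi,z)\}$, the choice $w=(2\pi,0,0)$ gives $\{(\tfrac\pi2,0,z)\}$, and in general $w=2\pi(a,b,0)$ yields the fiber over $(-\tfrac\pi2+\pi a,\pi b)$, which again lies on the zero set of $H$; this reduces both part (a) and part (b) to the same type of connection problem.

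The remaining and principal task is the shooting problem: for each small $\epsilon>0$, exhibit an actual trajectory from the fiber over $(-\tfrac\pi2,0)$ to the fiber over the target network point. Since each fixed set is a one-parameter line and the transit time $T/2$ is a further free parameter, matched against the two scalar conditions defining the target line, the count is balanced, and I would solve it by a continuity/intermediate-value argument in the starting height $z_0$ along $\mathrm{Fix}(R_1)$. The geometry is dictated by the integrable limit $\epsilon=0$: there the $(x,y)$-motion is Hamiltonian for $H$, along the boundary network $\{H=0\}$ one has $z'=H=0$, and the $O(\epsilon)$ forcing in the $x$- and $y$-equations supplies exactly the drift that pushes a trajectory from one boundary segment, past a saddle, onto the next, advancing it by a lattice period while returning $z$ to its initial height. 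The hard part will be making this connection rigorous uniformly in $\epsilon$: at $\epsilon=0$ the connecting arcs are heteroclinic and take infinite time, so the transit time diverges near the separatrices and the trajectory becomes exponentially sensitive in the saddle (cell-corner) regions; the delicate estimate is to control this passage so that the two target coordinates still change sign across the shooting family and the intermediate-value argument closes.

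Finally, I would generate all four orbits in each part from a single representative. The group generated by $R_1,R_2,R_3$ and $2\pi\mathbb{Z}^3$ carries solutions to solutions and sends a ballistic orbit of shift $v$ to one of shift $-L_iv$ (reversing generators) or $L_iL_jv$ (their compositions). For part (a) one checks that $(2\pi,2\pi,0)$, $(2\pi,-2\pi,0)$ and their negatives form a single orbit under these linear maps (for instance $R_3$ exchanges the two diagonal directions, and a suitable composition reverses the sign), and likewise for part (b) the vectors $(\pm2\pi,0,0)$, $(0,\pm2\pi,0)$ form one orbit (with $R_2$ interchanging the $x$- and $y$-axes). Hence producing one connecting trajectory in each part, together with this transitive symmetry action, delivers all eight ballistic orbits asserted in the theorem.
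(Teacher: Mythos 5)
Your symmetry framework is essentially the paper's own mechanism: the paper also starts on the fixed set of \eqref{sym1} (the vertical line $\{(-\tfrac\pi2,0)\}\times\mathbb R$), runs the trajectory to the fixed set of a second reversing symmetry (\eqref{sym2} for part (a), \eqref{sym3} for part (b)), and composes the two reversals to get the lattice shift --- your identity $X(t+T)=R'R\,X(t)=X(t)+w$ is the same computation as the paper's $X_a(3t_a)=X_a(-t_a)+(2\pi,2\pi,0)$, and your group-action step generating all four orbits from one matches the paper's use of $\tilde X_a$ and $X_a(-t)-(\pi,\pi,\pi)$. (Your second section is $\mathrm{Fix}(\tau_wR_1)$, a vertical line, whereas the paper uses $\mathrm{Fix}(R_2)=\{x+y=\tfrac\pi2,\ z=\tfrac\pi4\}$; that is a cosmetic difference.) The genuine gap is that you pose but do not solve the shooting problem, which you yourself flag as ``the hard part'': for this theorem, that step \emph{is} the proof, since the entire claim ``for all small enough $\epsilon>0$'' lives there. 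Moreover, your stated scheme --- ``a continuity/intermediate-value argument in the starting height $z_0$'' --- does not close as described: a one-parameter IVT meets one scalar condition, while your target (a line in $\mathbb R^3$) imposes two; parameter counting (``the count is balanced'') is not an argument, and you give no device for reducing the two conditions to one.

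The paper supplies exactly such a device, together with the quantitative estimates you omit. It restricts $a\in[\tfrac\pi6,\tfrac\pi4)$ so that $\cos z>|\sin z|$ fixes the sign of the $O(\epsilon)$ drift; it proves the $xy$-projection stays confined near the boundary arc $V$ uniformly in $a$ (estimate \eqref{az2}); near $V$ it gets $x_a'>\tfrac\epsilon2$, so the trajectory exits the box $D=R\times(0,\tfrac\pi4)$ in time $t_a<\tfrac{2\pi}\epsilon$ and \emph{only} through the top $\{z=\tfrac\pi4\}$ or the side $\{x+y=\tfrac\pi2\}$ --- this exit dichotomy is what converts the two-condition problem into a one-parameter connectedness argument. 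The endpoints are then pinned down: for $a$ near $\tfrac\pi4$ the exit is through the top, while for $a=\tfrac\pi6$ it is through the side because $\sup_{t\in(0,t_a)} z_a'(t)/x_a'(t)\to0$ as $\epsilon\to0$ (since $(\cos x+\sin y)/\cos y\to0$ along $V$) and the total $x$-travel is at most $\pi$; finally both alternative sets of $a$ are relatively open by transversality of the vector field to $\partial D$, so some $a$ realizes both conditions simultaneously. Note also that the near-saddle difficulty you anticipate (divergent transit times, exponential sensitivity) is sidestepped rather than confronted in the paper: only finiteness $t_a<\tfrac{2\pi}\epsilon$ at fixed $\epsilon$ is needed, and the saddle passage is controlled by the soft confinement estimate \eqref{az2}, not by any delicate separatrix analysis. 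Without some version of these estimates and of the dichotomy structure, your proposal remains a correct plan whose central step is unproven.
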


\begin{proof}
Let $X_a(t) = (x_a(t), y_a(t), z_a(t))$ be the solution of \eqref{abceps} with initial condition
\beq 
X_a(0) = \left(-\frac{\pi}{2}, 0, a\right).
\label{ainit}
\eeq
For $a\in[\frac\pi 6,\frac \pi 4)$, let $t_a > 0$ be the first time such  that $X_a(t_a)\in \partial D$, where $ D = R\times (0,\frac{\pi}{4})$, and $R$ is the open rectangle in the $xy$-plane with vertices $(0,-\frac{\pi}{2}), \ (\frac{\pi}{2},0)$,  $(-\frac{\pi}{2},\pi), \ (-\pi,\frac{\pi}{2})$ (see Figure~\ref{Rboundary}).  If no such time exists, let $t_a=\infty$.  Note that for $t\in(0,t_a)$ we have $(x_a(t), y_a(t))\in R$, so $z_a'(t) > 0$.  This, together with the vector $(\cos y,\sin x)$ being tangential to $\partial R \backslash \{x+y=\frac{\pi}{2}\}$ and $\cos z > |\sin z|$ for $z\in[\frac{\pi}{6},\frac{\pi}{4})$, show that either
\[ z_a(t_a) = \frac{\pi}{4} \quad \mbox{or} \quad
y_a(t_a) + x_a(t_a) = \frac{\pi}{2} 
\quad \mbox{or} \quad
y_a(t_a) - x_a(t_a) = \frac{3\pi}{2}
\quad \mbox{or} \quad
t_a = \infty.
\]

Now let $\epsilon>0$ be sufficiently small.  Then  $\{(x_a(t),y_a(t))\}_{t\in[0,t_a]}$  stays close to the part of $\partial R$ between $(-\frac \pi 2,0), (0,-\frac \pi 2), (\frac \pi 2,0)$ (let us call it $V$) because $(\cos y, \sin x)$ is tangential to $V$, continuous, and non-zero near $V$ except at $(0,-\frac\pi 2)$.  More specifically, the trajectory $\{(x_a(t),y_a(t))\}_{t\in[0,t_a]}$ would have to hit the line $\{y + x = \frac{\pi}{2}\}$ before it can depart from $V$, and we in fact also have
\beq \label{az2}
\lim_{\epsilon\to 0} \sup_{\substack{a\in[\pi/6,\pi/4]\,\&\\ t\in[0,t_a]}} {\rm dist}((x_a(t),y_a(t)),V)=0.
\eeq
This means that the third alternative above cannot happen.  Since $\cos y + \epsilon \sin z  > \frac\epsilon 2$ for $(x,y)$ near $V$ (and inside $R$) and $z\in [\frac\pi 6,\frac \pi 4)$, we have $x_a'(t)>\frac\epsilon 2$ for $t\in(0,t_a)$. Hence $t_a<\frac {2\pi}\epsilon$ due to $x_a(t_a)-x_a(0)\le \frac \pi 2-(-\frac \pi 2)= \pi$, so one of the first two alternatives must happen.

It is also easy to see that $z_a(t_a)=\frac{\pi}{4}$ when $a$ is close enough (depending on $\epsilon$) to $\frac{\pi}{4}$ because $\cos x + \sin y>0$ in $R$.  Moreover, we also have $z_a(t_a)<\frac{\pi}{4}$ when $a=\frac \pi 6$ and $\epsilon>0$ is small enough.  This holds because $x_a'(t)\ge \cos y_a(t)$ for $t\in[0,t_a)$ and 
\begin{eqnarray*}
\lim_{\substack{(x,y)\in R\;\&\\ {\rm dist}((x,y),V)\to 0}} \frac {\cos x + \sin y}{\cos y} &=& \lim_{\substack{(x,y)\in R\,\;\&\\ {\rm dist}((x,y),V)\to 0}} \frac {2 \sin \frac{y+x+\pi/2}2 \sin \frac{y-x+\pi/2}2} {\sin(y+\pi/2)} 
\\
&=&0
\end{eqnarray*}
(the latter due to $\max\{\frac{y+x+\pi/2}2, \frac{y-x+\pi/2}2 \}\le y+\pi/2$ for $(x,y)\in R$),
 which together with \eqref{az2} show that
 \[
 \lim_{\epsilon\to 0} \sup_{t\in(0,t_a)} \frac{z_a'(t)}{x_a'(t)}=0.
 \]
 Since $x_a(t_a)-x_a(0)\le \pi$, it follows that $z_a(t_a)< \frac\pi 4$ for $a=\frac\pi 6$ and any small enough $\epsilon>0$.

We thus obtain that for any small $\epsilon>0$, there is $a\in(\frac{\pi}{6},\frac{\pi}{4})$ such that 
\beq 
z_a(t_a) = \frac{\pi}{4} \quad \mbox{ and } \quad 
x_a(t_a) + y_a(t_a) = \frac{\pi}{2}
\label{acrit}
\eeq
(this also uses that the two sets of $a\in [\frac{\pi}{6},\frac{\pi}{4})$ where one of these claims holds but not the other are both relatively open in $[\frac{\pi}{6},\frac{\pi}{4})$, which is due to the vector field on the right-hand side of \eqref{abceps} being transversal to $\partial D$ at $R\times\{\frac\pi 4\}$ and at the points of $L\times(\frac\pi 6,\frac\pi 4)$ that lie near $V\times(\frac\pi 6,\frac\pi 4)$, with $L$ the open segment connecting $(\frac{\pi}{2},0)$ and $(-\frac{\pi}{2},\pi)$ in the $xy$-plane).   Figures~\ref{Rboundary} and \ref{Dintersect} illustrate this for $\epsilon=0.1$ (which is too large for the critical $a$ to be greater than $\frac\pi 6$).

%%%%%%%%%%%% FIGURE %%%%%%%%%%%%%%%%
\begin{figure}[h!]
\begin{center}
          \includegraphics*[width=5in]{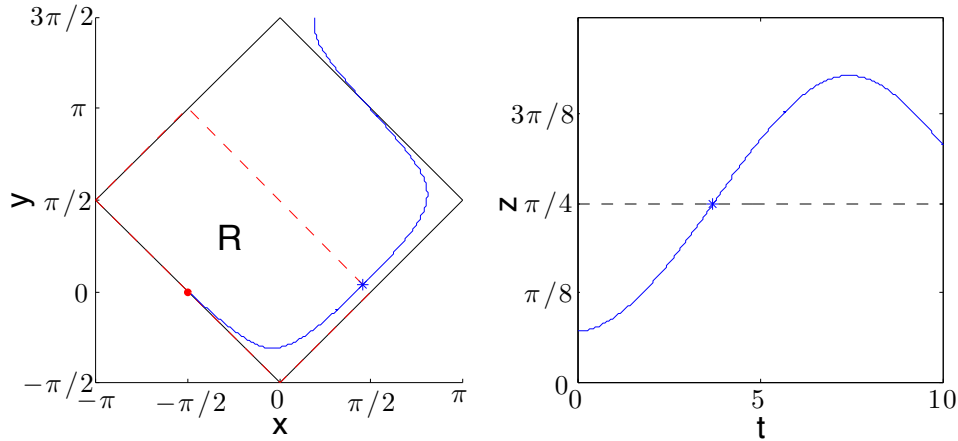}
          
\end{center}
\caption{The trajectory for \eqref{abceps} with $\epsilon =.1$ and  $X(0)=(-\frac{\pi}{2},0,0.2254)$, the star marking the point where it hits the intersection of the planes $\{z=\frac \pi 4\}$ and $\{x+y=\frac \pi 2\}$.  The rectangle $R$ is bounded by dashed red lines.}
\label{Rboundary}
\end{figure}
%%%%%%%%%%%%%%%%%%%%%%%%%%%%%%%%

%%%%%%%%%%%% FIGURE %%%%%%%%%%%%%%%%
\begin{figure}[h!]
\begin{center}
          \includegraphics*[width=3.5in]{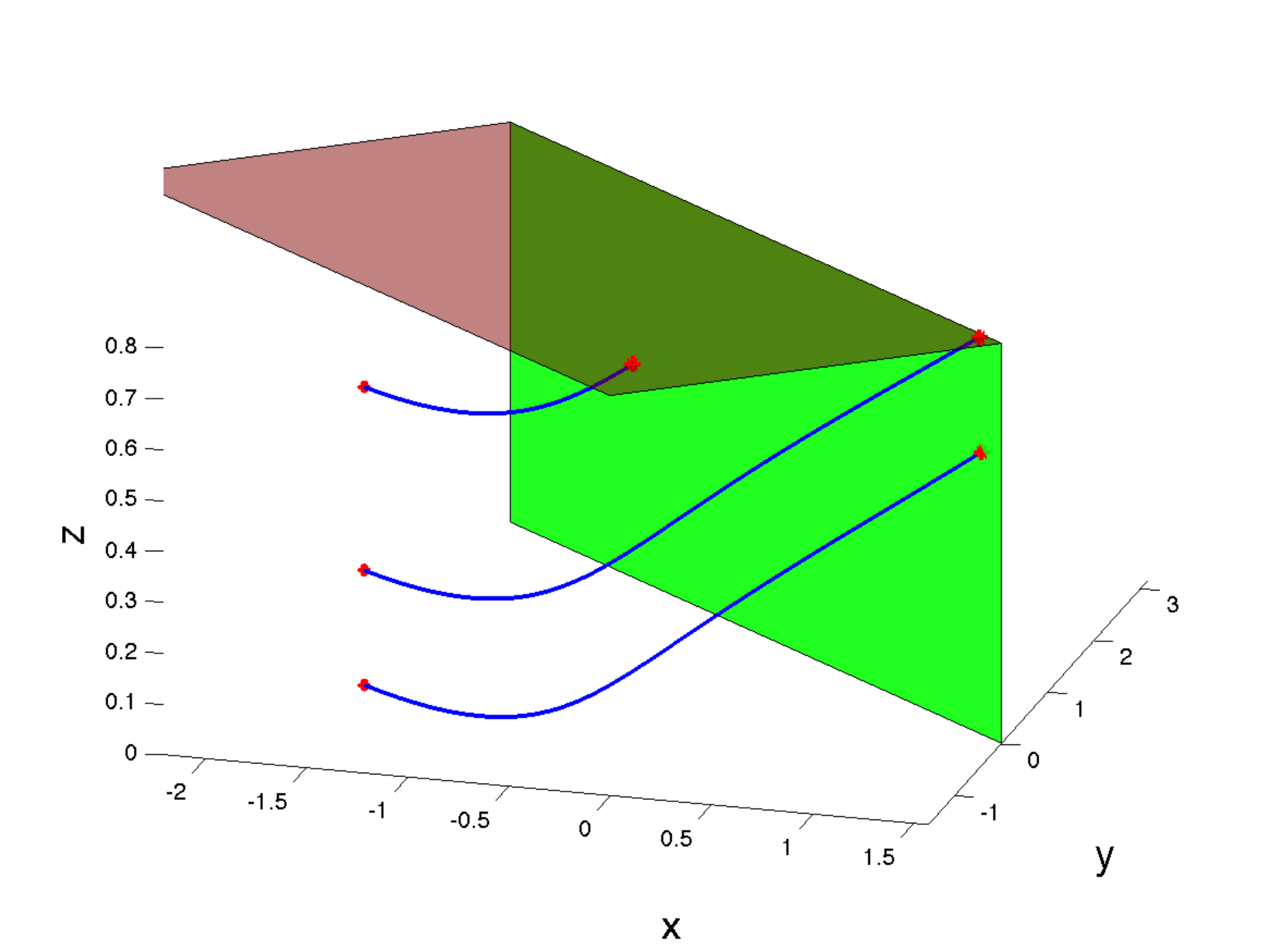}
          
\end{center}
\caption{Trajectories $X_a(t)$ with $X_a(0)=(-\frac \pi 2,0,a)$ and $a=0, 0.2254, 0.5854$ (again $\epsilon=.1$).   The middle one hits the intersection of $\{z=\frac \pi 4\}$ and $\{x+y=\frac \pi 2\}$.
}
\label{Dintersect}
\end{figure}
%%%%%%%%%%%%%%%%%%%%%%%%%%%%%%%%

For any $a$,  the symmetry \eqref{sym1} yields
\[ (x_a(t), y_a(t), z_a(t)) = (-\pi - x_a(-t), -y_a(-t), z_a(-t)).
\]
For  $a$ satisfying \eqref{acrit}, symmetry \eqref{sym2} also yields 
\[ (x_a(t), y_a(t), z_a(t)) = \left(\frac{\pi}{2} - y_a(2t_a-t),\frac{\pi}{2} -x_a(2t_a-t), \frac{\pi}{2} - z_a(2t_a-t)\right).
\]
Thus
\[ X_a(-t_a) = \left( - \pi - x_a(t_a), - y_a(t_a), \frac{\pi}{4}\right),
\]
and then
\[ X_a(3t_a) = \left( \frac{\pi}{2} + y_a(t_a), \frac{3\pi}{2} + x_a(t_a), \frac{\pi}{4}\right)
=X_a(-t_a) + (2\pi, 2\pi, 0),
\]
where we used  $x_a(t_a)+y_a(t_a) = \frac{\pi}{2}$ in the last equality.
Hence for all $t\in\mathbb R$,
\beq 
X_a(t+4t_a) = X_a(t_a) + (2\pi, 2\pi, 0).
\eeq
Finally, 
\[
\tilde X_a(t)= \left(\frac\pi 2- y_a(t), \frac\pi 2+x_a(t),  z_a(t)-\frac\pi 2 \right)
\]
is a trajectory of \eqref{abceps} satisfying  $X(t+4t_a)=X(t)+(-2\pi,2\pi,0)$, and $X_a(-t)-(\pi,\pi,\pi)$ and $\tilde X_a(-t)-(\pi,\pi,\pi)$ are the remaining two trajectories from (a).

The proof of (b) is identical, this time considering $a\in[\frac \pi 6,\frac \pi 2)$ and letting $t_a>0$ be the first time such that $X_a(t_a) \in \partial D$, where now $D=R\times (0, \frac{\pi}{2})$ and $R$ is the triangle in the $xy$-plane with vertices $(0,-\frac{\pi}{2}), (0, \frac{3\pi}{2}), (-\pi, \frac{\pi}{2})$.  As in (a), we now obtain $a\in(\frac{\pi}{6}, \frac{\pi}{2})$ such that 
\beq
z_a(t_a) = \frac{\pi}{2} \quad \mbox{ and } \quad
x_a(t_a) = 0.
\label{acrit2}
\eeq
Then symmetries \eqref{sym1} and  \eqref{sym3}, together with \eqref{acrit2}, yield
\[ 
X_a(3t_a) = \left( \pi, - y_a(t_a), \frac{\pi}{2}\right) = X_a(-t_a) + (2\pi, 0, 0),
\]
and the rest follows as in (a).
$\square$
\end{proof}

\medskip 

\begin{remark}
Note that since $(\arcsin \frac \eps{\sqrt 2}, \arcsin \frac \eps{\sqrt 2} - \frac\pi 2,\frac{5\pi} 4)$ is a stationary point of \eqref{abceps}, we have proved that for any small $\epsilon>0$ and any $\alpha,\beta\in\{-2\pi,0,2\pi\}$, the system \eqref{abceps} has a solution satisfying $X(t+T)=X(t)+(\alpha,\beta,0)$ for some $T>0$ and each $t\in\mathbb R$.
Figure~\ref{xyperiodic} shows two such solutions, with $X(t+T) = X(t)+(2\pi, 2\pi,0)$ and $X(t+T)=X(t) + (2\pi, 0,0)$.
We conjecture  that 
such solutions exist for any $\epsilon >0$.  As this paper is mainly concerned with the near-integrable case of small $\epsilon$, we  will investigate large $\epsilon$ in a future work.
\end{remark}

%%%%%%%%%%%% FIGURE %%%%%%%%%%%%%%%%
\begin{figure}[h!]
\begin{center}
          \includegraphics*[width=2.7in]{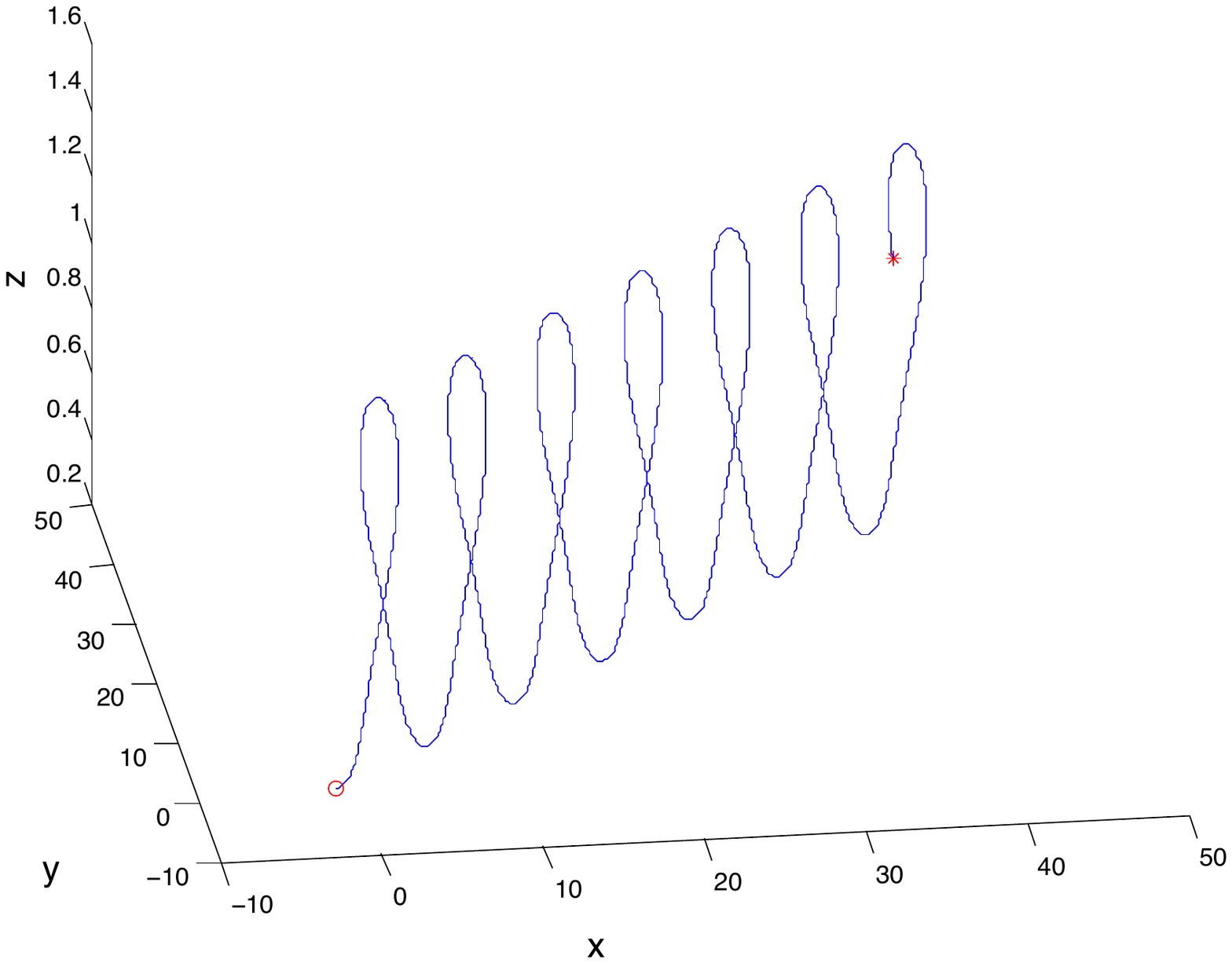}
          \includegraphics*[width=2.7in]{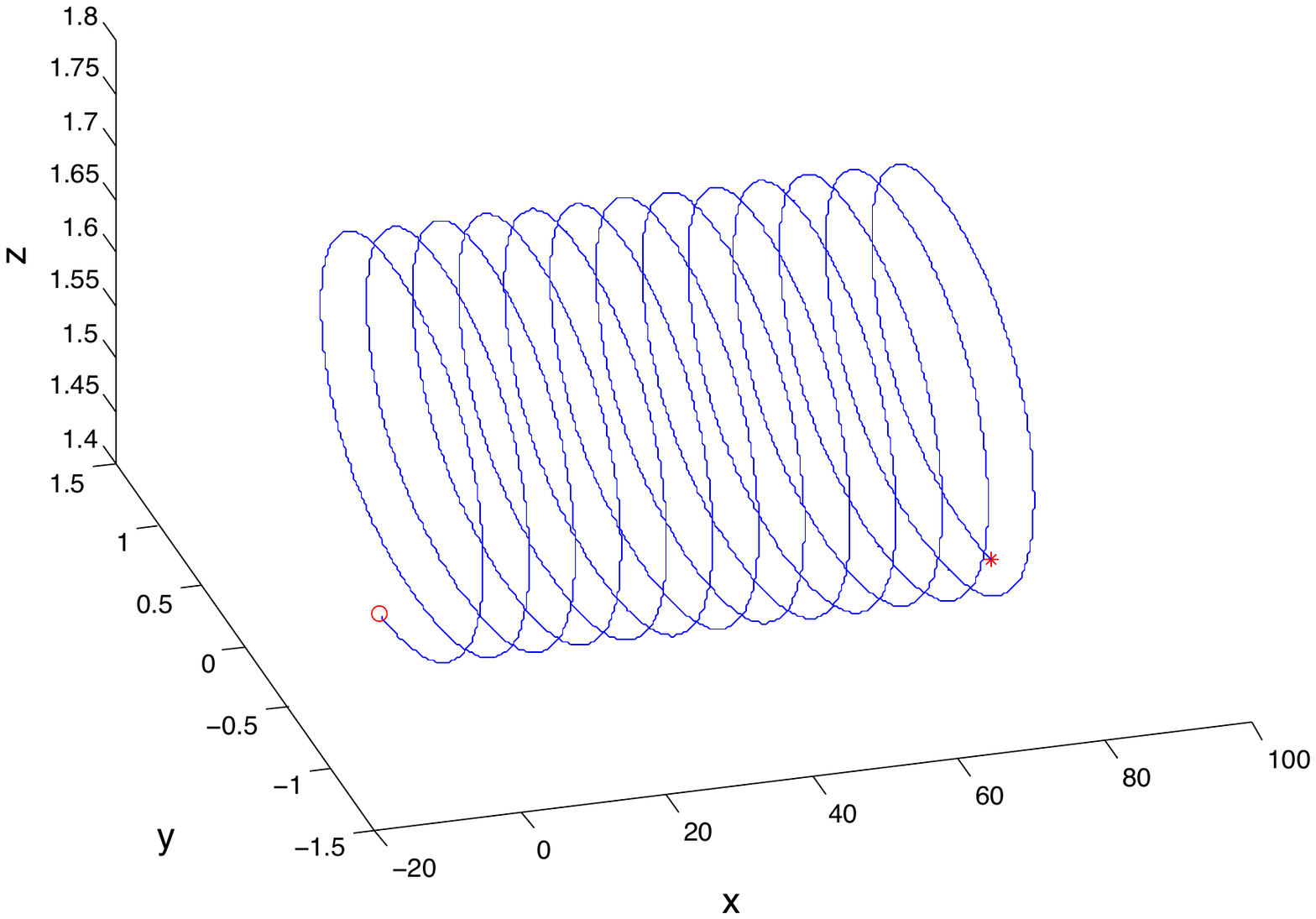}
          
\end{center}
\caption{Trajectories for \eqref{abceps} with $\epsilon=.1$ and $X(0)=(-\frac \pi 2,0,a)$, where  $a=0.2254, 1.4148$.}
\label{xyperiodic}
\end{figure}
%%%%%%%%%%%%%%%%%%%%%%%%%%%%%%%%

%%%%%%%%%%%% FIGURE %%%%%%%%%%%%%%%%
\begin{figure}[h!]
\begin{center}
          \includegraphics*[width=2.5in]{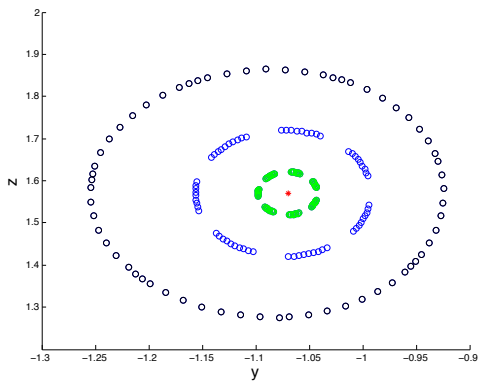}
\end{center}
\caption{Poincar\' e sections for starting points with  $z(0) = a_c + .05, .15, .3$ are marked in green, blue, and black, respectively.}
\label{poincare}
\end{figure}
%%%%%%%%%%%%%%%%%%%%%%%%%%%%%%%%

\begin{remark}
Numerics suggest that the $z$-periodic solutions of Theorem~\ref{zperiodic} are neutrally stable.  In Figure~\ref{poincare} we show Poincar\'{e} sections at $x=0\,\,{ \rm mod}\,\, 2\pi$.  The $(y,z)$ coordinates are plotted at these sections for trajectories starting near the $z$-periodic trajectory crossing through $(-\frac \pi 2,0,a_c)$, where $a_c$ is the value for which $X(t+T)=X(t)+(2\pi,0,0)$.  Near the fixed point of the Poincar\'{e} map, points appear to be mapped onto closed curves surrounding the fixed point.
\end{remark}

%
%
%\newpage
\subsection{Perturbation analysis}
\label{sec.perturb}

A standard perturbation analysis can provide further information about orbits close to the boundaries of the cells, that is, the lines $H(x,y)=0$.  

The heteroclinic orbits  of the unperturbed $\epsilon = 0$ system can be calculated by elementary means.  For example, consider the cycle connecting the fixed points $(0,-\pi/2), (\pi,\pi/2), (0,3\pi/2)$, $(-\pi,\pi/2)$ (see Figure~\ref{heteroclinic}).  
%%%%%%%%%%%% FIGURE %%%%%%%%%%%%%%%%
\begin{figure}[h!]
\begin{center}
          \includegraphics*[width=2.5in]{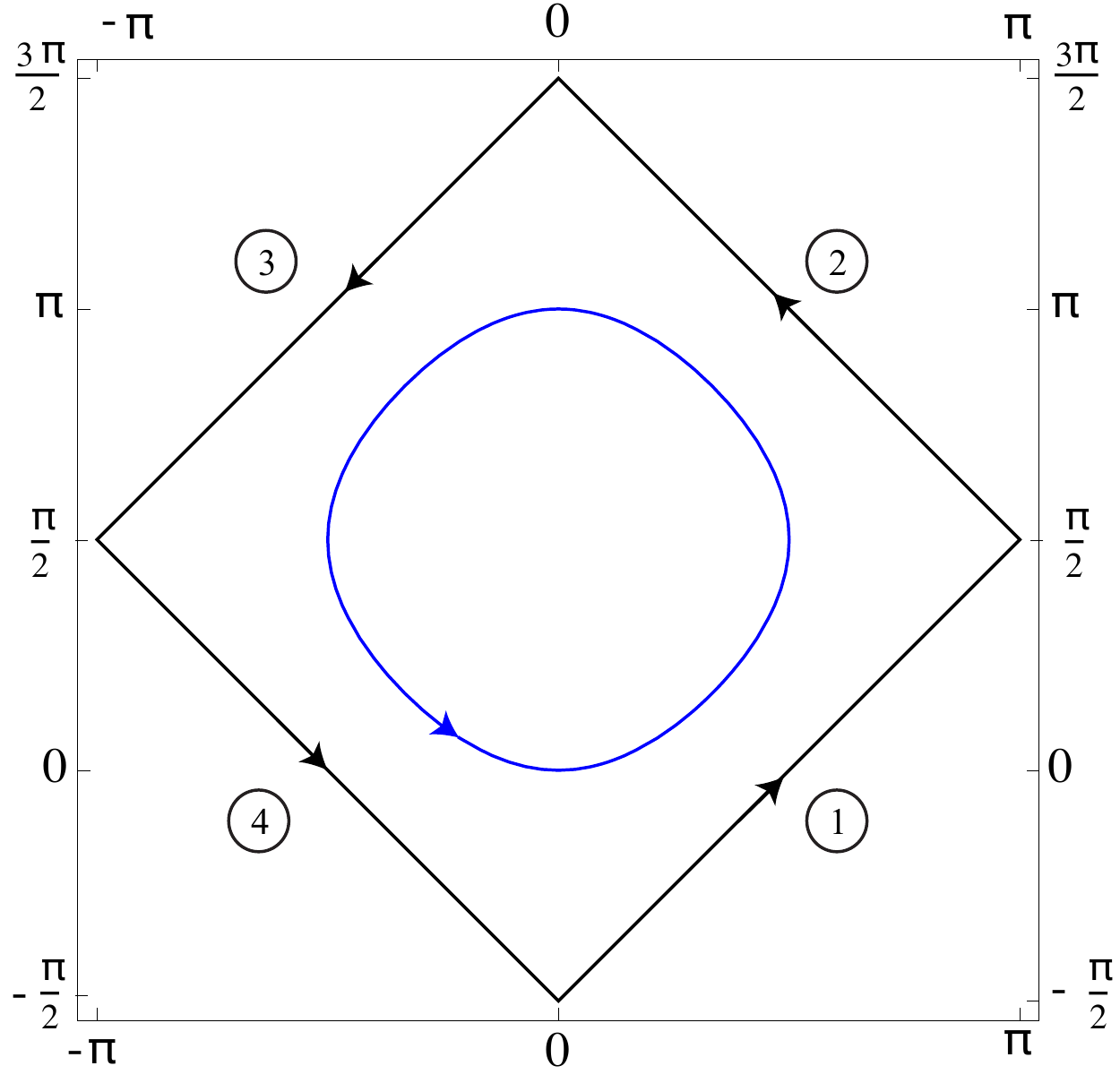}
\end{center}
\caption{A heteroclinic cycle in the conservative $\epsilon=0$ system.}
\label{heteroclinic}
\end{figure}
%%%%%%%%%%%%%%%%%%%%%%%%%%%%%%%%

We label the heteroclinic orbits from 1 to 4 counterclockwise.  On orbit 1, $y=x-\pi/2$, so $x'=\sin x$.  This has the solution $x(t) = \gd(t)+\pi/2$, where $\gd(t)$ is the Gudermannian function 
\[
\gd(t) = 2\tan^{-1}\left(\tanh\left(\frac{t}{2}\right)\right).     
\]
Thus, the heteroclinic orbit 1 is
\[ (x(t),y(t)) = \left(\gd(t) + \frac{\pi}{2}, \gd(t)\right)
\]
The other heteroclinic orbits can be found similarly.

\begin{remark}
The above heteroclinic orbits exist for each $z$ when $\epsilon=0$, but  there are special values of $z$ for which straight-line orbits that are confined to the boundaries of the cells exist for each $\epsilon$.  Specifically, this happens when $(\sin z, \cos z)$ is parallel to a boundary of the cells.  For instance, if $\hat{x}(t)$ solves $\hat{x}'(t) = \sin \hat{x} + \epsilon/\sqrt{2}$, then
\beq
(x(t), y(t), z(t)) = \left( \hat{x}(t), \hat{x}(t) - \frac{\pi}{2}, \frac{\pi}{4}\right)
\label{special}
\eeq
solves \eqref{abceps}.  The same is true when $z=5\pi/4$ and $\hat{x}$ solves $\hat{x}'(t) = \sin \hat{x} - \epsilon/\sqrt{2}$, and similar solutions parallel to $(1,-1)$ exist
for $z=3\pi/4$ and $z=7\pi/4$.
\end{remark}

\medskip
Consider a heteroclinic orbit $(x_0(t), y_0(t), z_0)$ of the unperturbed system. 
We assume that $\epsilon >0$ is small and expand in powers of $\epsilon$:
\[ x = x_0 + \epsilon x_1 + \epsilon^2 x_2 + \cdots %\quad y= y_0 + Ay_1 + A^2 y_2 + \cdots.
\]
and similarly for $y$ and $z$.
Substituting this into \eqref{abc1}
 and collecting  terms with the same powers of $\epsilon$ yields
\begin{eqnarray}
x_1 ' & = & -\sin(y_0) y_1 + \sin z_0  \label{x1}
\\
y_1 ' & = & \cos(x_0) x_1 + \cos z_0 \label{y1}
\\
z_1' &=& -\sin(x_0) x_1 + \cos(y_0) y_1. \label{z1}
\end{eqnarray}
Since $H(x_0,y_0)=0$, we have $-\sin y_0 =  \cos x_0$, so we can add and subtract \eqref{x1} and \eqref{y1} to get the decoupled system in the tangential and orthogonal directions to the unperturbed flow, which have easily obtainable closed form solutions.
For example, if we take $(x_0(t), y_0(t))$ to be orbit~4 from Figure~\ref{heteroclinic} with $(x_0(0), y_0(0))=(-\pi/2,0)$, then $\cos x_0(t) = \tanh t$, and we obtain 
\begin{eqnarray}
x_1 (t)+ y_1(t) & = & c_1\, \mbox{cosh}(t) + 
\sqrt{2}\sin(z_0+\pi/4) \cosh(t)  \gd(t)  
\label{xy1}
\\
x_1 (t)- y_1(t) &=& c_2 \, \sech (t) + 
\sqrt{2} \sin(z_0 - \pi/4) \tanh(t).
\label{xy2}
\end{eqnarray}
(Then $c_1$ and $c_2$ are determined from the initial condition for $x_1,y_1$.  For instance, taking $x_1(0)= y_1(0)=0$ 
yields $c_1=c_2=0$.)

Solutions obtained in this way have good agreement with numerical solutions until the trajectory has traversed approximately $1/4$ of the boundary of the cell (see Figure~\ref{fourtrajectories}), which takes a long time if $\epsilon$ is small because the trajectory comes close to the stationary point $(0,-\pi/2)$.
From (\ref{xy1})-(\ref{xy2}) we see that $x_1 + y_1$ grows without bound positive or negative, depending on the sign of $\sin(z_0+\pi/4)$, while $x_1- y_1$ asymptotes at $\sqrt{2}\sin(z_0-\pi/4)$.  Thus, for small $\epsilon > 0$, if $z_0$ is in
$(-\pi/4, \pi/4)$, $(\pi/4, 3\pi/4)$, $(3\pi/4, 5\pi/4)$, or $(5\pi/4, 7\pi/4)$, but not too close to the endpoints of these intervals, then the trajectory will have entered the cell marked in Figure \ref{fourtrajectories} by a, b, c, d,
respectively, by the time it traversed $1/4$ of the boundary of the cell.

%%%%%%%%%%%% FIGURE %%%%%%%%%%%%%%%%
\begin{figure}[h!]
\begin{center}
          \includegraphics*[width=4in]{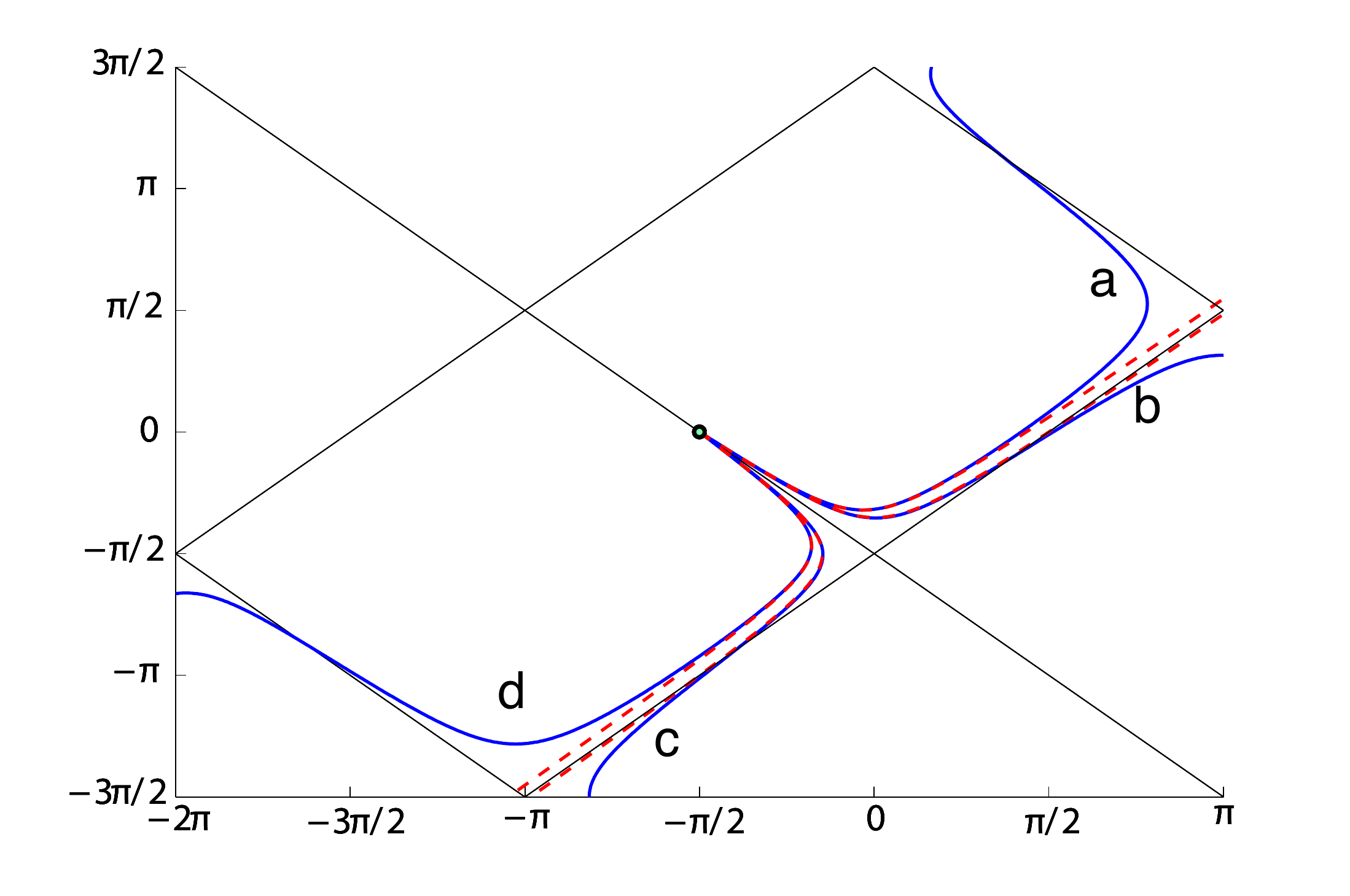}
\end{center}
\caption{Four trajectories with  $(x(0),y(0)) = (-\pi/2,0)$.   (a) $z(0)=0$;  (b) $z(0)=\pi/2$; (c) $z(0)=\pi$; (d) $z(0)=3\pi/2$.
Numerical solution (blue) and first order approximation (red dash).}
\label{fourtrajectories}
\end{figure}
%%%%%%%%%%%%%%%%%%%%%%%%%%%%%%%%

%
%
\subsubsection*{Estimating $z(0)$ for the solutions with periodic $z$ components}

In \S\ref{sec.xygrowth} we showed that there are values $a$ such that the $z$ component of the trajectory  with $X_a(0)=(-\pi/2,0,a)$ is periodic.  Let us now approximate such $a$ for small $\epsilon>0$ by using perturbation analysis.  We will do this for the trajectory from Theorem \ref{zperiodic} satisfying $X_a(t+4t_a) = X_a(t) + (2\pi, 2\pi,0)$, for which $(z_0=)$ $a\in(-\pi/4,\pi/4)$.

Then $X_a(t_a)$ is the point where the trajectory crosses the plane $x+y=\pi/2$ (marked $b$ in Figure~\ref{approx}), and thus has traversed $1/4$ of the cell boundary.  Thus $z(t_a)$ (which equals $\pi/4$) can be estimated via \eqref{z1}.  Using the above solutions for $x_0, y_0, x_1, y_1$ (with $x_1(0)=y_1(0)=0$) yields
\[ z_1'(t) = \sqrt{2}\sin(z_0 + \pi/4)\gd(t),
\]
and so
\[ \frac\pi 4 = z(t_a) \approx a + \epsilon \sqrt{2} \sin(a+\pi/4) \int_0^{t_a} \gd(s)\, ds.
\]
Therefore $a$ and $t_a$ can be estimated by solving the system
\begin{eqnarray*}
\epsilon \sqrt{2} \sin(a+\pi/4) \cosh(t_a) \gd(t_a) & = & \pi
\\ 
a + \epsilon \sqrt{2} \sin(a+\pi/4) \int_0^{t_a} \gd(s)\, ds & = & \frac{\pi}{4}.
\end{eqnarray*}

%%%%%%%%%%%% FIGURE %%%%%%%%%%%%%%%%
\begin{figure}[h!]
\begin{center}
	 \includegraphics*[width=2.7in]{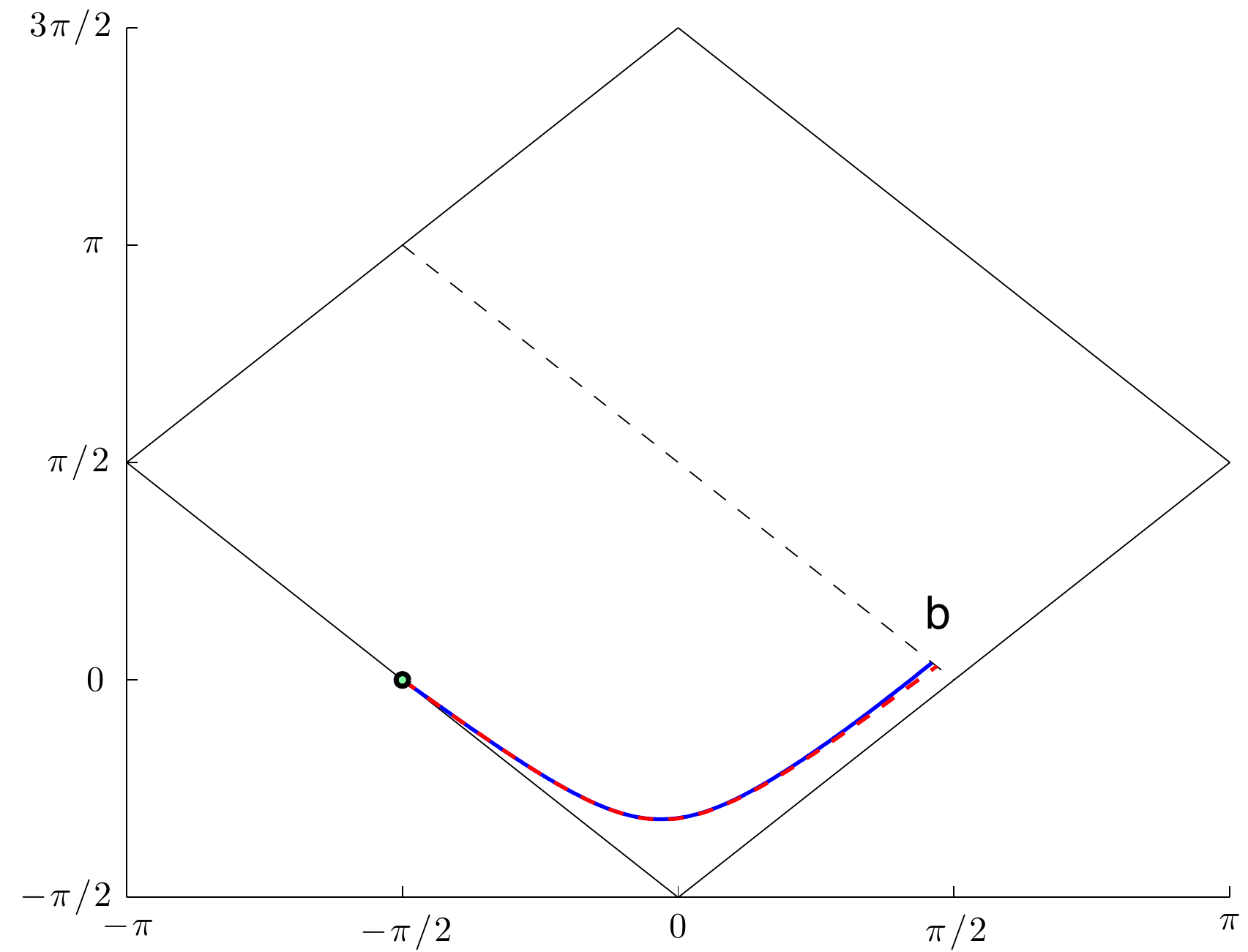}
          \includegraphics*[width=2.7in]{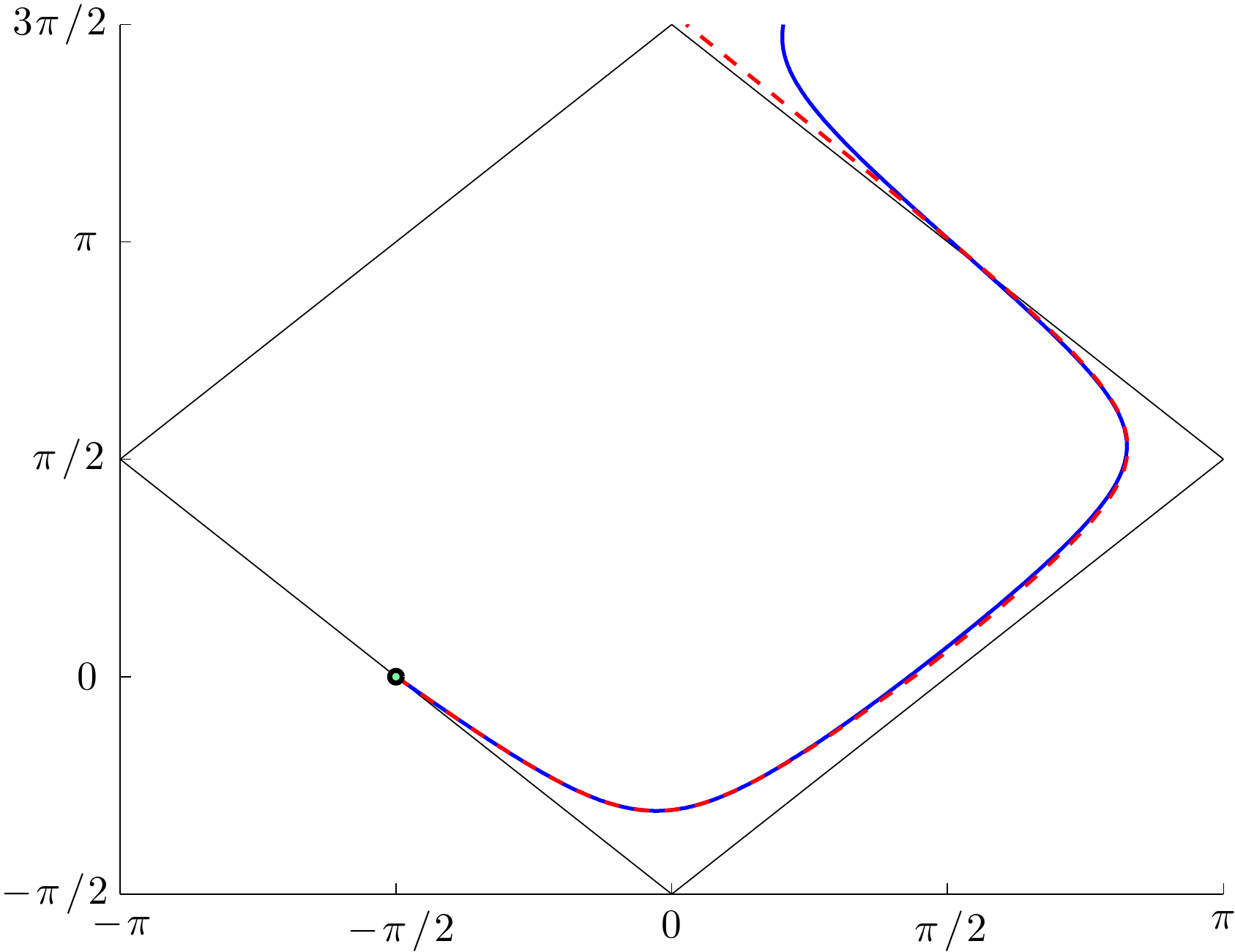}
\end{center}
\caption{Left: the point $b$ is where the trajectory crosses $x+y=\pi/2$.  Right:  Approximation (red dash)  and numerical solution (blue) for the trajectory with $\epsilon = .1$ and $z_0=0$.}
\label{approx}
\end{figure}
%%%%%%%%%%%%%%%%%%%%%%%%%%%%%%%%

%
%
\subsubsection*{Crossing the cell boundary}

The first order approximation derived above breaks down, as noted, after the trajectory traverses $1/4$ of the cell boundary.  If $z_0\in (\pi/4,3\pi/4)$ and not close to the endpoints, then the trajectory has already crossed the cell boundary through orbit 1 by this point (the situation is similar, only reflected across the line $x+y=-\pi/2$, for $z_0\in(3\pi/4, 5\pi/4)$).  
For $z_0\in(-\pi/4, \pi/4)$ and not close to the endpoints, 
numerical results show that one can continue the approximation by taking an appropriate backwards-in-time perturbation from orbit 2 on Figure~\ref{heteroclinic} (see Figure~\ref{approx}; the situation is  similar for $z_0\in(5\pi/4, 7\pi/4)$).  In particular, the trajectory will cross the cell boundary through orbit 2.  For $z_0$ close to $\pi/4$ the crossing will also happen through one of these two orbits.

However, if $z_0$ is close to $-\pi/4$, 
then the trajectory may miss both these orbits.  In fact, for $z_0$ very close to $-\pi/4$, the trajectory may orbit  the cell several times before exiting.  Two such trajectories are shown in Figure~\ref{rots}.  It is not surprising that such a trajectory will eventually exit the cell, \emph{as long as it stays out of the KAM region}, since $z$ is increasing while the trajectory remains inside the cell.  It is interesting, however, that even those trajectories that orbit  the cell numerous times never seem to get caught in the KAM region (and hence seem to always exit the cell eventually).

%%%%%%%%%%%% FIGURE %%%%%%%%%%%%%%%%
\begin{figure}[h!]
\begin{center}
          \includegraphics*[width=2.7in]{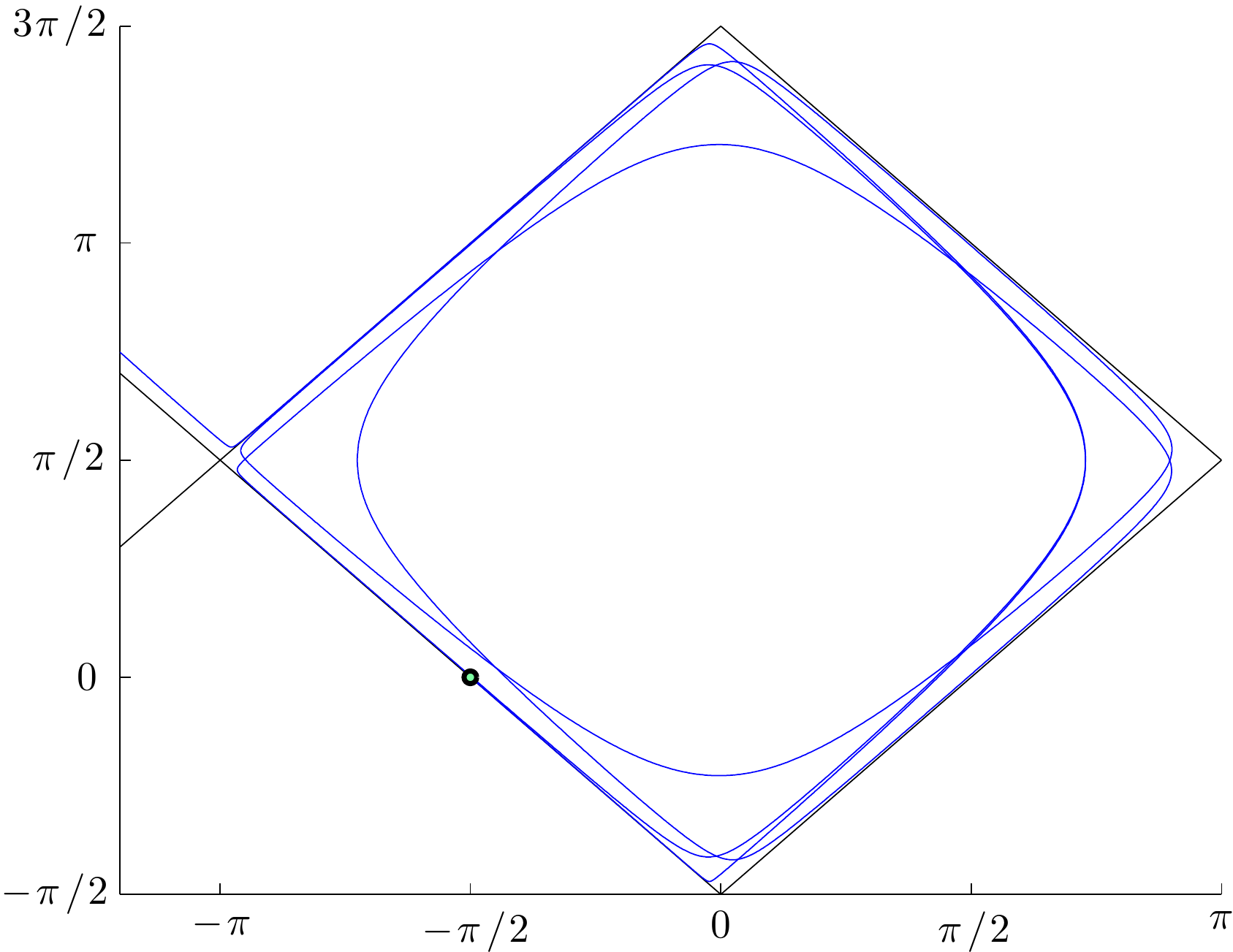}
          \includegraphics*[width=2.7in]{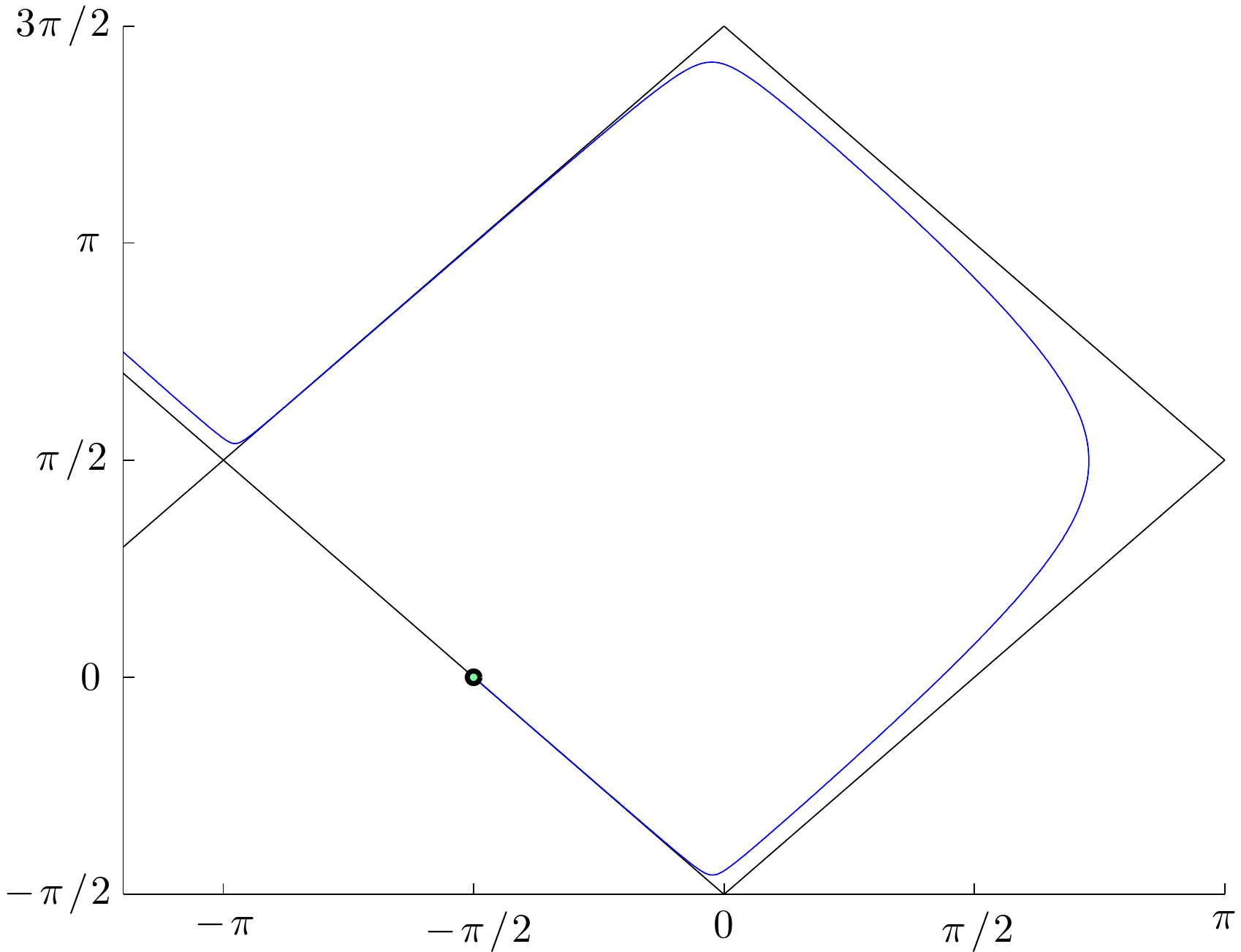}
\end{center}
\caption{Trajectories $X_a$ for $\epsilon = .1$ and $a = -\pi/4 + .01$ (left) or $a = -\pi/4 + .001$ (right).}
\label{rots}
\end{figure}
%%%%%%%%%%%%%%%%%%%%%%%%%%%%%%%%

%
%
\subsection{Prevalence of linear growth in the non-KAM region}
\label{prevalence}

Trajectories that begin in the KAM region will grow linearly in $z$ (meaning that $z$ grows linearly in $t$).
In this section we present numerical results regarding the likelihood that a trajectory starting in the non-KAM region will grow linearly.  As we saw in \S\ref{sec.xygrowth}, there exist $z$-periodic trajectories that grow linearly in $x$, $y$, or both.  Are these trajectories typical for the non-KAM region, or are they exceptional?  We will present numerical evidence suggesting that the answer depends on $\epsilon$. 

%%%%%%%%%%%% FIGURE %%%%%%%%%%%%%%%%
\begin{figure}[h!]
\begin{center}
          \includegraphics*[width=2.7in]{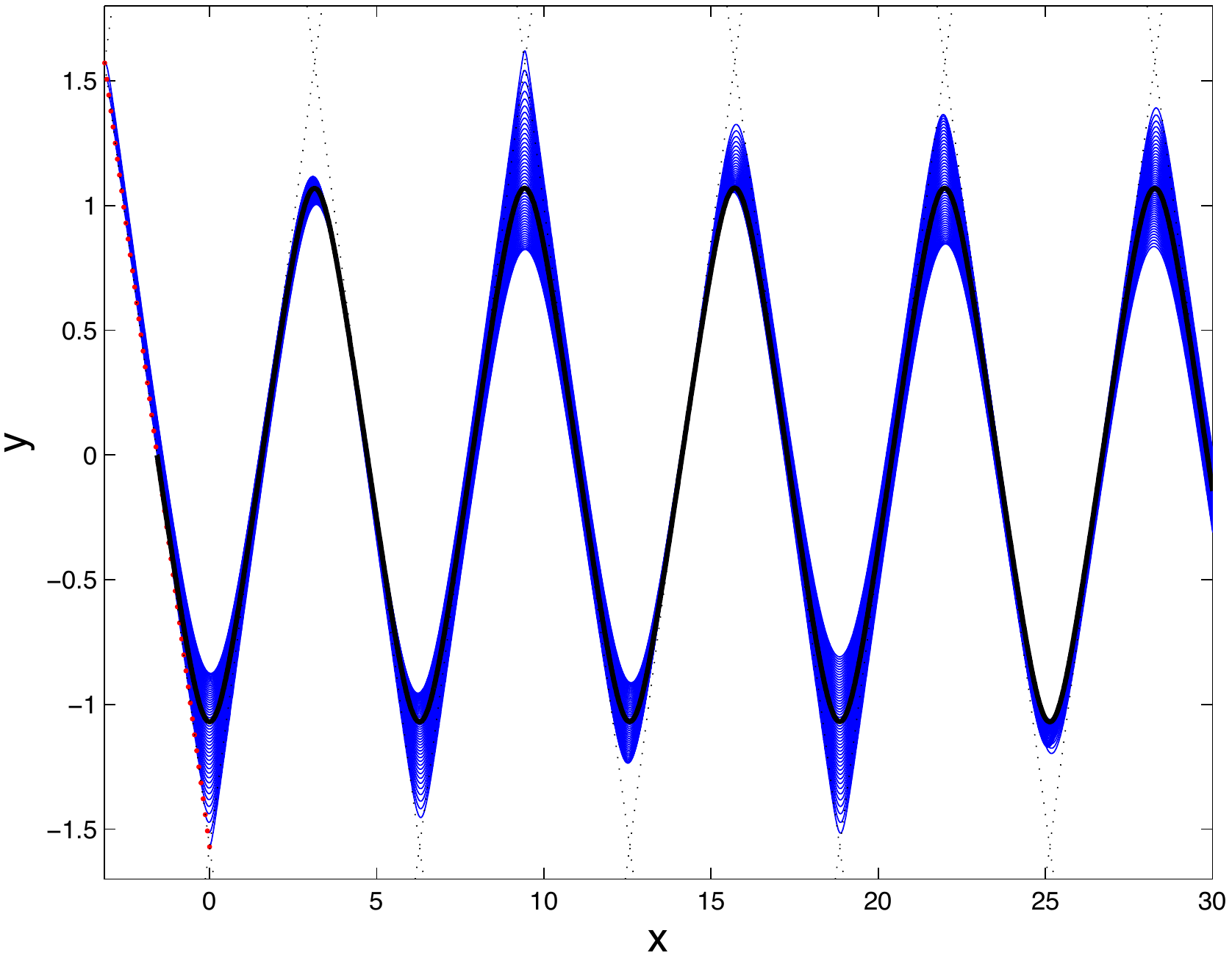}
          \includegraphics*[width=2.7in]{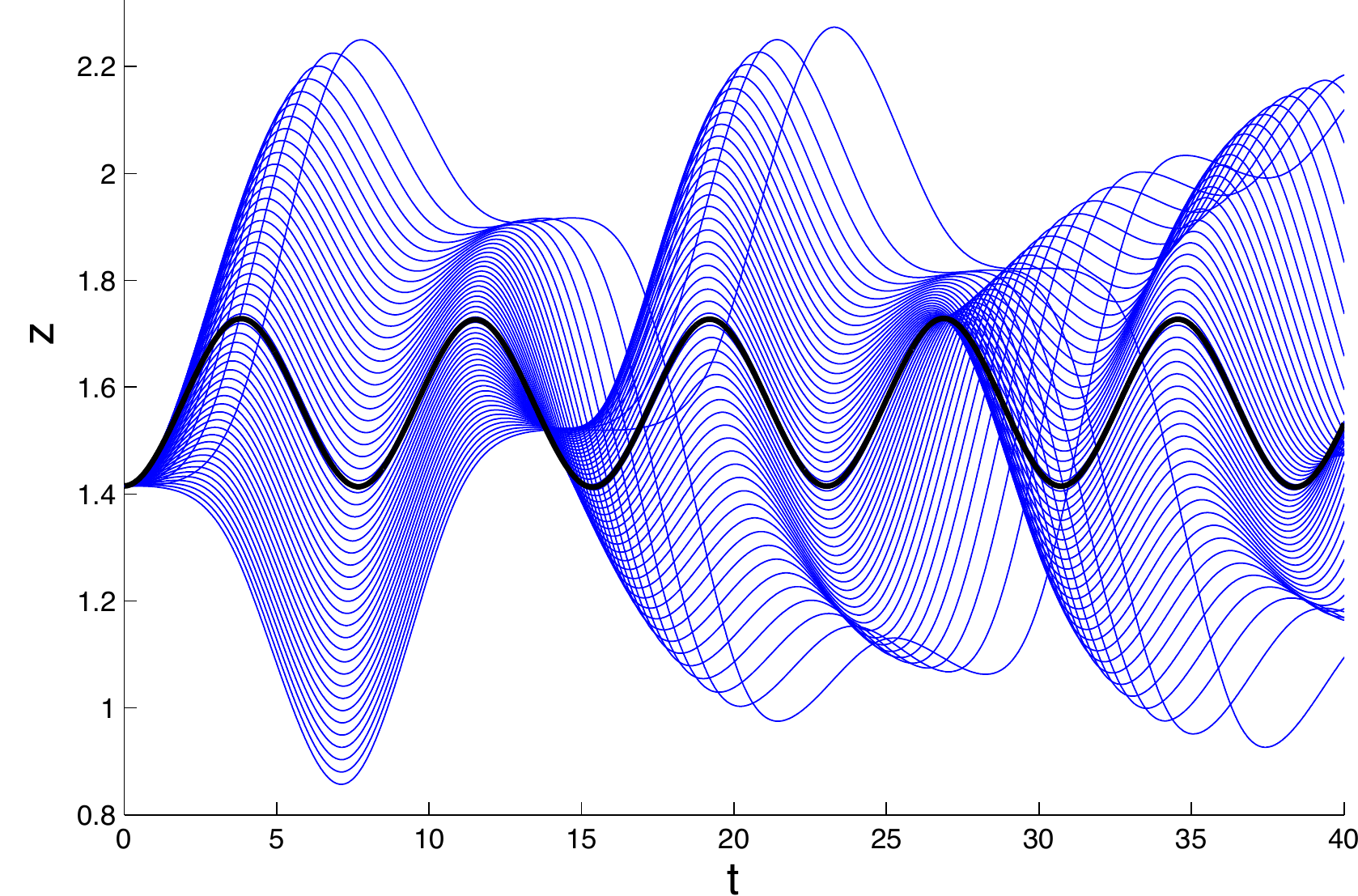}
\end{center}
\caption{Trajectories starting on orbit 4, with $\epsilon = .1$ and $z(0) = 1.4148$. }
\label{segsamez0}
\end{figure}
%%%%%%%%%%%%%%%%%%%%%%%%%%%%%%%% 

In Figure~\ref{segsamez0} we show trajectories with initial conditions $(x(0), y(0))$ evenly distributed on the line segment from $(-\pi,\pi/2)$ to $(0,-\pi/2)$ (i.e., orbit 4 in Figure~\ref{heteroclinic}).  The initial value $z(0)=a_c$ is the same for all trajectories, and it is that for which $X_{a_c}(t+4t_{a_c}) = X_{a_c}(t) + (2\pi, 0, 0)$ in Theorem \ref{zperiodic}.  The thick black curve is the trajectory $X_{a_c}$ (also shown on the right side of Figure~\ref{xyperiodic}), and we see that all the 
others have asymptotically linear growth in $x$.

Next we examine the effect of the initial value $z(0)$.  In Figure~\ref{segsamexy} we show 200 trajectories which all have the same $(x(0),y(0)) = (-\pi/2, 0)$ and different $z(0)$, evenly distributed from $-\pi/4$ to $7\pi/4$.  The trajectories are color coded according to which interval $z(0)$ is in.  It appears that only some of these trajectories  grow linearly in $x$, $y$ or both.

%%%%%%%%%%%% FIGURE %%%%%%%%%%%%%%%%
\begin{figure}[h!]
\begin{center}
          \includegraphics*[width=4in]{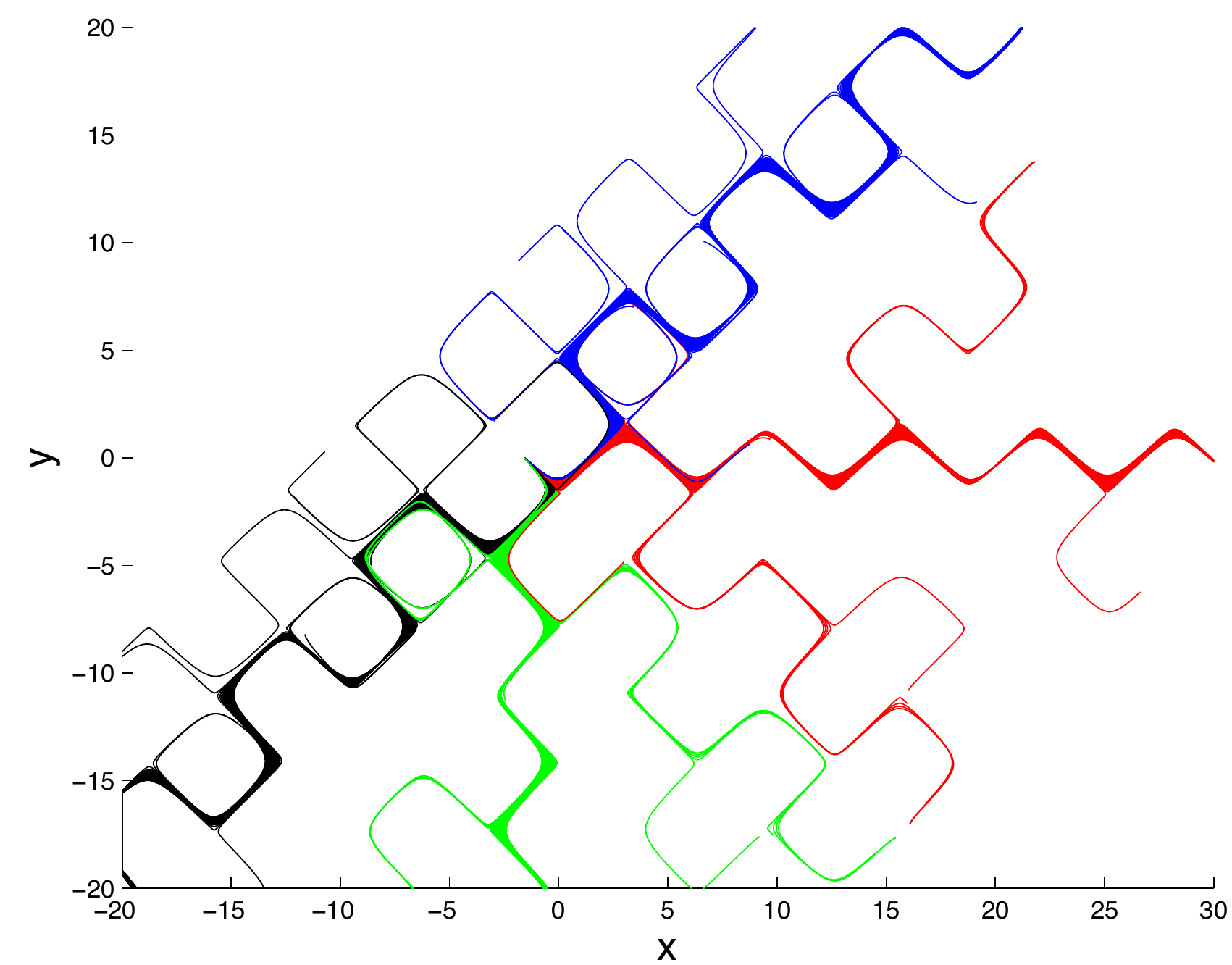}
\end{center}
\caption{Trajectories with initial conditions $(x(0),y(0)) = (-\frac \pi 2, 0)$ and  $\epsilon = 0.1$,  where $z(0)\in (-\frac{\pi}{4}, \frac{\pi}{4})$ (blue),  $z(0)\in (\frac{\pi}{4}, \frac{3\pi}{4})$ (red),   $z(0)\in (\frac{3\pi}{4}, \frac{5\pi}{4})$ (green),  and $z(0) \in (\frac{5\pi}{4}, \frac{7\pi}{4})$ (black).
}
\label{segsamexy}
\end{figure}
%%%%%%%%%%%%%%%%%%%%%%%%%%%%%%%%

Based on these results, one may conjecture that trajectories will have linear growth in $x$, and be quasi-periodic in $y$ and $z$, if the initial condition is close to $(\frac{\pi}{2}, 0, a_c)$, where $a_c$ is the critical value above.
To determine how close one must start, consider the rectangle $R$ in the plane $x+y = -\frac{\pi}{2}$, centered at $(-\frac{\pi}{2}, 0, a_c)$ and with width $\sqrt{2}\pi r$ and height $\frac{\pi}2r$.  The perturbation analysis suggests that trajectories  with $z(0)\in (\frac \pi 4, \frac{3\pi} 4)$ cross through segment 1, so these are candidates for linear growth in $x$.  Consider the rectangle $R'=\{x+y = -\frac{\pi}{2} \,\&\, x\in (-\pi,0) 
\,\&\, z\in (\frac{\pi}{4}, \frac{3\pi}{4})\}$.  When  $r=1$, then  $R$ is  $R'$ shifted slightly down; and when $r$ is small, then  $R\subseteq R'$ (see Figure~\ref{Rcross}).  

%%%%%%%%%%%% FIGURE %%%%%%%%%%%%%%%%
\begin{figure}[h!]
\begin{center}
          \includegraphics*[width=4in]{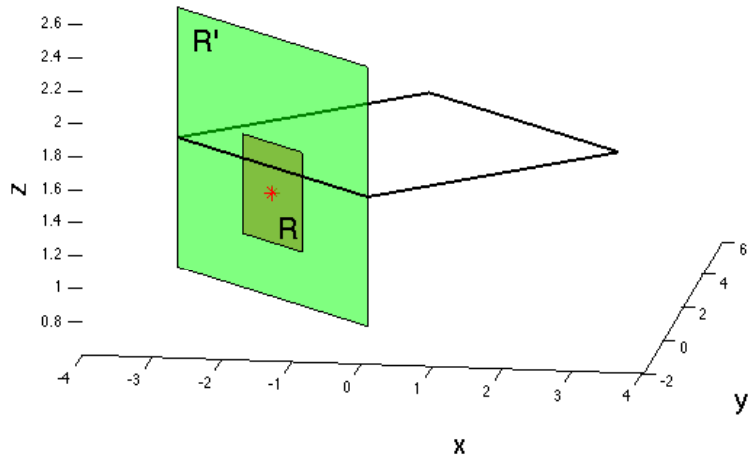}
\end{center}
\caption{$R$ is the rectangle centered at $(\frac{\pi}{2}, 0, a_c)$, shown as a star, in the plane $x+y=-\frac{\pi}{2}$.  The black square in the plane $z=\frac{\pi}{2}$ is the boundary of a cell.  $R'$ is the rectangle in the plane $x+y=-\frac{\pi}{2}$ from $z=\frac{\pi}{4}$ to $z=\frac{3\pi}{4}$.
}
\label{Rcross}
\end{figure}
%%%%%%%%%%%%%%%%%%%%%%%%%%%%%%%%

Now we examine the likelihood that a trajectory crossing through $R$ will grow linearly in $x$.   For this, we take 400 points evenly distributed in the rectangle $R$ and run the simulation until time $t=50$.  We then calculate how many of these have linear growth in $x$.  The fraction is plotted in Figure~\ref{xgrowthfrac} for $\epsilon = 0.1$.  Somewhat surprisingly, this ratio remains 1 (all trajectories have linear growth) for $r$ up to about 0.5.  That is, until $R$ is about half the size of $R'$.  Then this ratio declines, but not to zero.  Even at $r=1$, the ratio is still larger than $0.5$, suggesting that at $\epsilon=0.1$, about half of trajectories in the KAM region have linear growth.

%%%%%%%%%%%% FIGURE %%%%%%%%%%%%%%%%
\begin{figure}[h!]
\begin{center}
          \includegraphics*[width=2.5in]{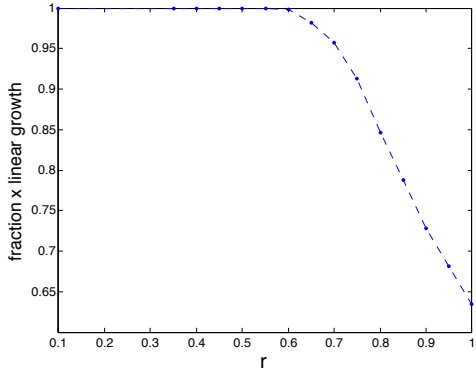}
\end{center}
\caption{Fraction of trajectories with linear growth in $x$ as a function of the size of the rectangle $R$, represented by $r$ (here $\epsilon = 0.1$).
}
\label{xgrowthfrac}
\end{figure}
%%%%%%%%%%%%%%%%%%%%%%%%%%%%%%%%

Next we examine the effect of $\epsilon$ on the prevalence of linear growth.  For this we take the full rectangle $R'$, and 1000 points evenly distributed on this rectangle and run the simulation again until $t=50$.  Then we count how many trajectories have linear growth in $x$.  The fraction of trajectories with linear growth in $x$ is shown in Figure~\ref{xgrowth_eps}.  Interestingly, the fraction grows with $\epsilon$.  While we were able to prove the existence of trajectories that grow linearly in $x$ or $y$ when $\epsilon$ is small, it seems that such trajectories are actually more common when $\epsilon$ is large.  This is perhaps not surprising: when $\epsilon$ is small, we are close to the integrable case, where the non-KAM region is very small.  As $\epsilon$ increases, the KAM region shrinks and transport between cells increases.
%occurs more frequently.

%%%%%%%%%%%% FIGURE %%%%%%%%%%%%%%%%
\begin{figure}[h!]
\begin{center}
          \includegraphics*[width=2.5in]{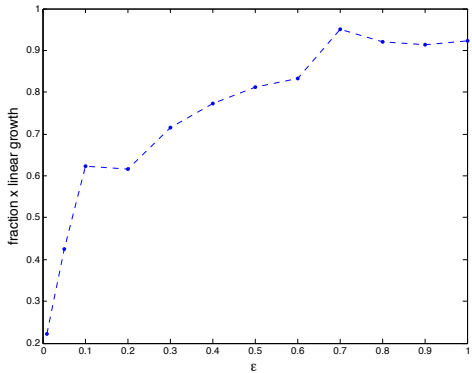}
\end{center}
\caption{Fraction of trajectories starting in the rectangle $R'$ with linear growth in $x$, as a function of $\epsilon$.
}
\label{xgrowth_eps}
\end{figure}
%%%%%%%%%%%%%%%%%%%%%%%%%%%%%%%%

%
%
%
\section{Concluding remarks}
\label{sec.conclusion}
\setcounter{equation}{0}

In this paper we study solutions of the ABC flow in the near-integrable case $0<A\ll 1$ and $B=C=1$.  We are particularly interested in solutions that grow linearly in time, since directions in which such growth occurs correspond to enhanced speed of front propagation.  We find such growth in two distinct regions:  KAM and non-KAM.
Near the centers of cells, in the KAM regions, there are solutions whose $z$-components grow asymptotically linearly. Near the boundaries of cells, in the non-KAM regions, there are solutions whose $x$- and/or $y$-components grow asymptotically linearly.  These include special trajectories for which the $x$- and $y$-components are periodic in the KAM case, and the $z$-component is periodic in the non-KAM case.  Numerical evidence suggests that the asymptotically linear growth is common.

Numerical experiments also support the following conjectures.

\begin{conjecture}
Almost all trajectories that cross the boundary of a cell will cross cell boundaries infinitely many times.
\end{conjecture}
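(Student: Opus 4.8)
The plan is to recast the statement measure-theoretically. Since the ABC vector field is divergence free, the flow \eqref{abceps} preserves Lebesgue measure on $\mathbb{T}^3$. Let $\Sigma=\{H=0\}=\{\cos x+\sin y=0\}$ be the union of cell boundaries---a smooth surface away from the stationary points---and equip it with the flux measure $d\mu_\Sigma=|V\cdot n|\,dS$, which is finite and, being the natural cross-section measure of a volume-preserving flow, is preserved by the first-return map $P$ to $\Sigma$ wherever $P$ is defined. Crossing a cell boundary is exactly an intersection of an orbit with $\Sigma$, so the measure space for ``trajectories that cross'' is $(\Sigma,\mu_\Sigma)$ itself. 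Once I show that $P$ is defined $\mu_\Sigma$-almost everywhere (i.e. almost every crossing orbit returns to $\Sigma$ at least once), measure preservation gives that the set where $P^2$ is undefined is $P^{-1}(\Sigma\setminus\operatorname{dom}P)$, a null set, so by induction $P^n$ is defined a.e. for every $n$; hence a.e. crossing orbit meets $\Sigma$ infinitely often (and Poincar\'e recurrence yields the stronger fact that it returns to every neighborhood of its crossing point). This reduces the conjecture to a single claim: the \emph{non-return set} $B\subset\Sigma$ of points whose forward orbit never meets $\Sigma$ again is $\mu_\Sigma$-null.

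A point of $B$ enters a cell and is trapped there for all future time; since $z'=H$ has a fixed sign inside a cell, such an orbit has $z$ strictly monotone and is a (generalized) spiral orbit. While it stays in the cell I can use $z$ as the time variable and pass to the Hamiltonian form (\ref{abc5})--(\ref{abc7}); the time-$2\pi$ (in $z$) stroboscopic map $\mathcal P$ is then an area-preserving map of the cell $D$ in the $(x,p)$ variables, so that $B$ corresponds to $\mathcal P$-orbits beginning near $\partial D$ that never reach $\partial D$. The first structural input is that $\mathcal P$ preserves area, so a \emph{positive}-area set of orbits cannot converge to the zero-area union of KAM circles; thus trapping of a positive-measure set of entering orbits can only occur by their wandering forever inside the \emph{instability zone} between the outermost surviving KAM circle and $\partial D$, without ever reaching $\partial D$.

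The heart of the proof is therefore to show that this outer zone contains no forward-invariant trap of positive area adjacent to $\partial D$. Here I would exploit the perturbation picture of \S\ref{sec.perturb}: at $\epsilon=0$ the boundary $\partial D$ is a heteroclinic cycle of saddles (and the Hamiltonian reduction degenerates there, since $H\to0$), while for $0<\epsilon\ll1$ the first-order analysis shows the separatrices split, producing transverse heteroclinic intersections and a genuine chaotic layer hugging $\partial D$. By Birkhoff's theorem every essential invariant circle of $\mathcal P$ is a Lipschitz graph over the angle, and I would rule out any such circle separating the KAM region from $\partial D$ (other than the KAM circles themselves) by showing that the transverse tangle precludes a graph-like barrier inside the layer. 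Combined with the absence of positive-measure convergence to null invariant sets, this forces the $\omega$-limit set of a trapped orbit to meet $\partial D$, contradicting trapping and giving $\mu_\Sigma(B)=0$.

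The main obstacle is the final accessibility step: ruling out \emph{secondary} trapping structures inside the instability zone. Even with no separating invariant circle, the zone could in principle harbor positive-measure invariant sets---secondary islands around resonant periodic orbits, or cantori of small flux---that capture a positive $\mu_\Sigma$-fraction of the entering flux and prevent escape. Proving that such structures carry zero entering flux, or that a.e. orbit escapes across them, is precisely the type of statement not known for general area-preserving maps, which is why the assertion is posed as a conjecture. I would attempt it quantitatively: exponentially small splitting estimates for the separatrices of \eqref{abceps} give a chaotic layer of exponentially small width but with transverse, hence topologically connected, dynamics, and one can then try to combine flux estimates across the bounding cantori (following MacKay--Meiss--Percival) with Poincar\'e recurrence to show that the flux trappable in islands is strictly smaller than the flux entering through $\Sigma$, forcing a.e. entering orbit to exit. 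Making this comparison rigorous, uniformly in the entering data, is the crux and the step I expect to be genuinely hard.
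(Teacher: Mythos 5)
You should first be aware that the paper contains no proof of this statement: it appears in \S\ref{sec.conclusion} as Conjecture 1, supported only by the numerical evidence of \S\ref{prevalence}, so there is no paper argument to compare yours against and it must stand on its own. As written it does not stand: you yourself identify the final step---ruling out capture of a positive fraction of the entering flux by secondary islands and cantori in the instability zone---as open, and nothing in your sketch closes it, so what you have is an honest program rather than a proof. There are also smaller inaccuracies along the way: the union of surviving KAM circles of the stroboscopic map is not a ``zero-area'' set (KAM circles typically fill positive measure), and the passage to the Hamiltonian form (\ref{abc5})--(\ref{abc7}) degenerates exactly at the cell boundary, where $H\to 0$, the change of time variable from $t$ to $z$ blows up, and \eqref{abc7} no longer defines $y$ uniquely, so your ``area-preserving map of the cell $D$'' is not an honest map of a region up to and including $\partial D$.

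The more substantive point is that, under your own measure-theoretic formulation, the step you single out as the crux is not needed at all. The non-return set $B\subset\Sigma$ is flux-null by the classical tube (Poincar\'e recurrence) argument: distinct points of $B$ have pairwise disjoint, non-periodic forward orbits (any coincidence or periodicity would produce a return to $\Sigma$), each such orbit remains in a single component of $\mathbb{T}^3\setminus\Sigma$, and the map $(X,t)\mapsto\phi_t(X)$ on $B\times(0,T)$ is injective with Jacobian $|V\cdot n|$ because $\phi_t$ preserves volume; hence its image has volume $T\mu_\Sigma(B)$ while being contained in a region of volume at most $(2\pi)^3$, forcing $\mu_\Sigma(B)=0$. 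Invariant islands cannot capture orbits arriving from $\Sigma$ precisely because they are invariant, and cantori of small flux only make escape times long, not escape itself a positive-measure failure; your worry conflates unbounded sojourn times with non-return. Combined with your first-paragraph induction (plus the routine remark that tangential intersections form a flux-null set whose orbit saturation is Lebesgue-null, so almost every return is a genuine sign change of $H$, i.e., a crossing), this proves the conjecture in its natural reading: Lebesgue-almost every initial condition whose orbit meets $\Sigma$ meets it infinitely often. That such a soft argument suffices suggests the authors intend a stronger reading---compare their third conjecture, which demands the conclusion for \emph{every} trajectory starting outside a KAM region, with $z(0)=z_0$ a fixed slice carrying no invariant measure, where Fubini controls only almost every $z_0$. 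For that version your Birkhoff/MacKay--Meiss--Percival program, and the gap you flag, are exactly the right difficulty; for the version as stated, you abandoned a finish line you had already reached.
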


\begin{conjecture}
There exists an open set of positive measure of initial conditions such that $y$ (or $x$)  grows monotonically and $x$ (or $y$) and $z$ are bounded (quasi-periodic).  There also exists such an open set such that $x$ and $y$ grow monotonically and $z$ is bounded.
\end{conjecture}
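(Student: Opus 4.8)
The plan is to upgrade the \emph{individual} ballistic edge orbits produced in Theorem~\ref{zperiodic} into \emph{open tubes} by studying the dynamics transverse to them through a Poincar\'e return map, and then to apply a twist (KAM) theorem for area-preserving, reversible maps. I would work on the torus $\mathbb T^3=(\mathbb R/2\pi\mathbb Z)^3$, on which the flow \eqref{abceps} descends smoothly. Each orbit from Theorem~\ref{zperiodic} satisfying $X(t+T)=X(t)+(\alpha,\beta,0)$ closes up on $\mathbb T^3$ into a $T$-periodic orbit $\gamma$: winding once in $x$ only for the single-direction orbits of Theorem~\ref{zperiodic}(b), which correspond to the first assertion of the conjecture, and once in each of $x$ and $y$ for the diagonal orbits of Theorem~\ref{zperiodic}(a), corresponding to the second assertion. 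I would fix a smooth two-dimensional section $\Sigma$ transverse to $\gamma$ (for the $x$-ballistic family, the section $\{x=0 \bmod 2\pi\}$ already used in the Poincar\'e plots of the remark after Theorem~\ref{zperiodic}), so that $\gamma$ meets $\Sigma$ at a fixed point $q$ of the first-return map $P\colon \Sigma\to\Sigma$.

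Two structural facts make $P$ amenable to KAM analysis. First, \eqref{abceps} is incompressible, so the return map of a divergence-free flow to a transversal preserves the flux area form $\iota_V\,dx\wedge dy\wedge dz$ restricted to $\Sigma$; hence $P$ is area-preserving and $\det DP(q)=1$. Second, the orbits of Theorem~\ref{zperiodic} were built from the time-reversal symmetries \eqref{sym1}--\eqref{sym3}, which descend to an involution $S$ on $\Sigma$ conjugating $P$ to $P^{-1}$ and fixing $q$; thus $P$ is a \emph{reversible} area-preserving map with a symmetric fixed point. The steps would then be: (i) show that $q$ is \emph{elliptic} for all small $\epsilon>0$, i.e.\ the eigenvalues of $DP(q)$ are $e^{\pm i\theta}$ with $\theta\in(0,\pi)$ avoiding the strong resonances of order up to four; (ii) compute the Birkhoff normal form to one further order and verify that the \emph{twist} (nondegeneracy) coefficient is nonzero; (iii) invoke Moser's invariant-curve theorem for reversible area-preserving twist maps to produce a positive-measure family of invariant circles encircling $q$. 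Any such circle bounds an invariant open disk in $\Sigma$, and flowing this disk out along $\gamma$ yields an open, flow-invariant solid-torus neighborhood of $\gamma$ of positive measure; on it the coordinates transverse to the winding directions stay bounded (indeed quasi-periodic) while $x$ (respectively $x$ and $y$) advances by $2\pi$ per return and so grows linearly in $t$. This furnishes exactly the ballistic tubes claimed, and applying the construction to the two families of Theorem~\ref{zperiodic}(b) and~(a) gives the two assertions of the conjecture. The neutral stability and the closed surrounding curves observed numerically in Figure~\ref{poincare} are precisely the elliptic/KAM picture this argument would make rigorous.

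The main obstacle is steps (i)--(ii): the monodromy of the linearized (variational) flow around $\gamma$, and hence $\mathrm{tr}\,DP(q)$ and the twist coefficient, are not explicitly computable because $\gamma$ itself is given only implicitly by the shooting argument of Theorem~\ref{zperiodic}, and, more seriously, $\gamma$ spends a time of order $\epsilon^{-1}$ near the stationary points on the cell boundary, where the field is nearly stagnant. The variational equation is therefore a singularly perturbed, non-autonomous linear system whose fundamental matrix develops large entries during the near-saddle transits, so deciding whether $\mathrm{tr}\,DP(q)$ lands in the elliptic window $(-2,2)$ rather than in the hyperbolic regime is delicate. The natural route is matched asymptotics in the limit $\epsilon\to0$, gluing the linearization along the closed-form heteroclinic pieces of Section~\ref{sec.perturb} (e.g.\ \eqref{xy1}--\eqref{xy2}) across the saddle passages, while exploiting reversibility \eqref{sym1}--\eqref{sym3} to constrain the monodromy to the symmetric (hence potentially elliptic) form and to cut down the number of coefficients one must estimate. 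Verifying the nonvanishing of the first Birkhoff invariant for an implicitly defined orbit, uniformly as $\epsilon\to0$, is the crux of the difficulty, and is why the statement is posed as a conjecture rather than proved here.
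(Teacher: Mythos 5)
First, a point of calibration: the statement you were asked to prove is stated in the paper as a \emph{conjecture}, with no proof offered --- the paper's only support for it is numerical (the Poincar\'e sections in Figure~\ref{poincare}, the prevalence statistics of \S\ref{prevalence}) plus the heuristic of ``principal vortices'' from the ABC literature. So there is no proof in the paper to compare yours against, and your proposal should be judged as a research program. As such, it is a sensible and well-chosen one: closing up the ballistic orbits of Theorem~\ref{zperiodic} on $\mathbb{T}^3$, passing to a transversal, using incompressibility to get an area-preserving return map and the reversing symmetries \eqref{sym1}--\eqref{sym3} to get a reversible one, and then invoking Moser's invariant-curve theorem is the standard route from a single periodic orbit to an open invariant solid torus of orbits that advance by $(2\pi,0,0)$ or $(2\pi,2\pi,0)$ per period. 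Notably, this would sidestep the obstruction the paper itself flags for tubes of spiral orbits (that KAM theorems give no regularity of the set of invariant tori), because the open disk \emph{enclosed} by a single invariant circle is flow-invariant regardless of how the circles themselves are distributed.

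That said, the proposal is not a proof, and you correctly identify where it fails to close: steps (i)--(ii). Nothing in the paper, and nothing in your sketch, establishes that the symmetric fixed point $q$ is elliptic rather than hyperbolic, nor that the Birkhoff twist coefficient is nonzero. This is a genuine analytic obstruction, not bookkeeping: the periodic orbit exists only via a non-constructive shooting/continuity argument, its period is of order $\epsilon^{-1}$, and the variational equation along it passes repeatedly near the saddles on the cell boundaries, where the fundamental matrix accrues exponentially large entries over the transit time $\sim\log(1/\epsilon)$ per passage. Whether $\mathrm{tr}\,DP(q)$ lands in $(-2,2)$ therefore hinges on delicate cancellations (which reversibility constrains but does not decide), and in such matched-asymptotics problems the trace typically oscillates rapidly in $\epsilon$, so at best one could hope for ellipticity on a sequence or a positive-measure set of $\epsilon$, not for all small $\epsilon$. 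Two smaller gaps: your candidate section $\{x=0 \bmod 2\pi\}$ must actually be compatible with one of the reversors (\eqref{sym1} moves it to $\{x=-\pi\}$; only \eqref{sym3} preserves it, and one must check $q$ lies on the fixed set of the induced involution); and your construction yields \emph{linear} growth of $x$ (advance by $2\pi$ per return), whereas the conjecture asserts \emph{monotone} growth, which does not follow from the tube argument and would require a separate sign argument for $x'$ along the tube. In short: right architecture, but the core of the conjecture --- ellipticity and twist of the monodromy in the singular limit --- remains exactly as open in your write-up as it is in the paper.
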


Note that \cite{Dom_86} suggested that all ABC flows have so-called principal vortices, that is, tubes of (periodic and quasi-periodic) orbits that travel predominantly in one direction (along the axes $x$, $y$ or $z$).  Thus, the above conjecture is only a modest strengthening.

\begin{conjecture}
For any fixed initial value $z(0)=z_0$, the $xy$-plane is divided into disjoint KAM and non-KAM regions.  Any trajectory that begins in the KAM region will remain trapped in the cell it begins in (but not necessarily in the KAM region).  Any trajectory that begins outside a KAM region will cross cell boundaries infinitely many times.
\end{conjecture}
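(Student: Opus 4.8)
The plan is to localize the analysis to a single cell and convert the dichotomy into a statement about an area‑preserving return map. Fix $z(0)=z_0$ and a cell $\mathcal C$ on which $H=\cos x+\sin y$ has constant sign, say $H>0$, so that $z'=H>0$ for as long as the trajectory remains in $\mathcal C$. As in \S\ref{sec.spiral} I would use $z$ as the evolution variable and pass to the Hamiltonian form \eqref{abc5}--\eqref{abc7} in the conjugate pair $(x,p)$, where $p=\int_0^y H\,dy'$ is a genuine change of variables on $\mathcal C$ precisely because $H$ keeps its sign there. Since the extended flow preserves $dx\,dp\,dz$, the time‑$2\pi$ (in $z$) Poincar\'e map $P_\eps$ on $\mathcal C$ is area preserving, and it is an $O(\eps)$ perturbation of the integrable map $P_0$ generated by the conservative cell flow, whose orbits are the closed level curves of $H$. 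The first step is to verify the twist (nondegeneracy) condition for $P_0$: each level curve is rotated by an amount determined by its $z$‑period, which by the computation recalled after Theorem~\ref{persol} is close to $2\pi H(I)/H'(I)$, so the unperturbed rotation number is a nonconstant, monotone function of the action $I$; this is the same nondegeneracy underlying the application of \cite{MW1994}.

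With the twist condition established, Moser's invariant‑curve theorem yields, for all small $\eps$, a positive‑measure nested family of $P_\eps$‑invariant circles encircling the perturbed elliptic cell center, each a closed curve bounded away from $\partial\mathcal C$. Let $\Gamma$ denote the outermost such (essential) invariant circle. Any point enclosed by $\Gamma$ cannot cross it under $P_\eps$, so its reduced orbit stays in a compact subset of $\mathcal C$, $z$ increases without bound, and the trajectory never reaches $H=0$. Defining the KAM region $K_{\mathcal C}$ as the set of initial points in $\mathcal C$ whose forward trajectory never leaves $\mathcal C$, this shows that the interior of $\Gamma$ lies in $K_{\mathcal C}$, that $K_{\mathcal C}$ has nonempty interior and nonempty complement, and that points of $K_{\mathcal C}$ are trapped forever; an orbit started inside $\Gamma$ may still wander through the resonant zone beneath $\Gamma$ without lying on an invariant torus, which is exactly the ``trapped in the cell, but not necessarily in the KAM region'' phenomenon. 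This gives the first two assertions of the conjecture.

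The third assertion is the substantive one, and it localizes to the claim: a trajectory that enters a cell transversally across $\partial\mathcal C$ at a finite time must exit $\partial\mathcal C$ again at a later finite time. Such an entering orbit lands in the annulus $\mathcal A$ between $\Gamma$ and $\partial\mathcal C$, outside $\Gamma$ and outside every island (interiors of invariant circles are $P_\eps$‑invariant and hence cannot be entered). I would analyze $\mathcal A$ as a thin stochastic layer around the heteroclinic network using the explicit unperturbed heteroclinic orbits (the Gudermannian solutions of \S\ref{sec.perturb}) and the first‑order data \eqref{xy1}--\eqref{xy2}. Along an unperturbed connection $H\equiv 0$ so $z\equiv z_0$, reducing the splitting of the perturbed two‑dimensional stable and unstable manifolds of the boundary saddles to a Melnikov integral parametrized by $z_0$, of the form $\sqrt2\,\sin(z_0\pm\tfrac\pi4)$ times a convergent integral. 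Simple zeros of this Melnikov function give transverse heteroclinic intersections, hence a Smale horseshoe in the layer; symbolic dynamics then shows that all but a measure‑zero set of orbits in $\mathcal A$ wander between the whiskers and therefore cross cell boundaries infinitely often. The reversing symmetries \eqref{sym1}--\eqref{sym3}, which permute the cells while conjugating the flow to its time reversal, are used both to rule out that the Melnikov function vanishes identically and to force equality of the measures of the forward‑ and backward‑trapped subsets of $\mathcal A$, which is consistent with the ``enter and stay forever'' set being negligible.

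I expect two genuine obstacles. The first is making the Melnikov/transversality computation rigorous in the present \emph{three‑dimensional volume‑preserving} setting rather than the classical periodically‑forced planar one: here $z$ is slow ($z'=H\to 0$) near $\partial\mathcal C$, the saddles carry two‑dimensional invariant manifolds, and one must track the splitting uniformly across the near‑degenerate corners where the trajectory lingers. The second, and in my view the decisive, issue is that the verbatim statement ``\emph{any} trajectory that begins outside a KAM region crosses cell boundaries infinitely many times'' is too strong: orbits lying on the stable manifolds of the perturbed boundary saddles (such as the one near $(\arcsin\tfrac{\eps}{\sqrt2},\arcsin\tfrac{\eps}{\sqrt2}-\tfrac\pi2,\tfrac{5\pi}4)$ noted after Theorem~\ref{zperiodic}) can enter a neighboring cell and then converge to a corner saddle, crossing only finitely often. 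Thus the correct theorem is the ``all but a measure‑zero (indeed meager) exceptional set'' version, with the exceptional set being exactly the union of these stable manifolds; the hardest part of the write‑up will be characterizing this exceptional set and showing that, away from it, the horseshoe dynamics genuinely drains the stochastic layer so that no positive‑measure pocket of orbits remains permanently trapped after an entry — a transport statement for which KAM theory supplies no a priori control of the chaotic layer's size or regularity.
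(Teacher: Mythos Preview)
The statement you are attempting to prove is labeled a \emph{Conjecture} in the paper, and the paper offers no proof of it whatsoever --- it appears in the concluding remarks (\S\ref{sec.conclusion}) as one of several open problems motivated by the numerical experiments of \S\ref{sec.edgeflow}. There is therefore no ``paper's own proof'' to compare your proposal against; you are attacking an open problem.

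On the substance of your proposal: the first two assertions (existence and trapping property of a KAM region) are indeed accessible by the invariant-curve argument you outline, and this is essentially the mechanism the paper alludes to when it invokes \cite{MW1994}. Your identification of the decisive obstacle is correct and worth emphasizing: the literal ``any trajectory'' claim is false as stated, since the stable manifolds of the perturbed boundary saddles give explicit orbits that cross only finitely many times before converging to a fixed point. So at best one can hope for an ``almost every'' version. Even that weakened version faces the serious difficulty you name --- ruling out positive-measure trapped pockets in the stochastic layer is a genuine transport/ergodicity question that Melnikov-type transversality and horseshoe existence do not by themselves resolve, since a horseshoe is a measure-zero Cantor set and says nothing about the fate of the surrounding positive-measure region. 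Your proposal is a reasonable program, but the gap between ``there is chaotic dynamics in the layer'' and ``almost every layer orbit escapes'' is exactly where the conjecture remains open.
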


Although the picture of the dynamics of ABC flows in the near integrable case is becoming clearer, there remain unanswered questions.  These include the behavior of trajectories in the non-KAM regions near those horizontal planes  where such trajectories can rotate around a cell several times before exiting.  The dynamics in these regions appears to have some chaotic traits.
 Sander and Yorke \cite{sy15} point out several metrics for chaos, including fractal dimension of the attractor, broad power spectrum, Lyapunov exponent, positive entropy, while admitting that 
chaos defies any single definition.  We propose another metric: complex route of escape to infinity.

The results of this paper rely heavily on the exact form of the equations.  Recalling that the ABC flow can be written   in the form 
\renewcommand{\arraystretch}{1}
\ba
\frac{d}{dt}\bpm x \\ y \epm & = & \bpm 0 & 1 \\ -1 & 0 \epm \nabla H + \epsilon f(z), \label{last1}
\\[.1in]
z' & = & H(x,y), \label{last2}
\ea
it would be interesting to see whether the results of this paper can be extended to more general systems of this type.  That is,  for which types of  Hamiltonians $H$ and periodic functions $f$ does the system (\ref{last1})-(\ref{last2}) admit ballistic spiral and edge orbits?

\vskip .2in
\noindent
\textbf{Acknowledgements:} \  JX and YY would like to thank Institut Mittag-Leffler 
for its hospitality during the Fall 2014 homogenization program where some of their work on ABC flows was in progress.
JX was partially supported by NSF grants
DMS-0911277 and DMS-1211179. YY was partially supported by DMS-0901460 and
NSF CAREER award DMS-1151919. AZ was partially supported by NSF CAREER grant DMS-1056327 and NSF grant DMS-1600641.

%\newpage

\end{document}